\documentclass{amsart}

\usepackage{amsmath,amssymb}
\usepackage{mathrsfs}
\usepackage{mathtools}
\usepackage{stmaryrd}
\usepackage{enumerate}
\usepackage{tikz-cd}
\usepackage[all]{xy}
\usepackage{aliascnt}
\usepackage[colorlinks=true, linkcolor=blue, citecolor=blue]{hyperref}

\usepackage{combelow}

\newtheorem{theorem}{Theorem}

\newaliascnt{lemma}{theorem}
\newtheorem{lemma}[lemma]{Lemma}
\aliascntresetthe{lemma}

\newaliascnt{corollary}{theorem}
\newtheorem{corollary}[corollary]{Corollary}
\aliascntresetthe{corollary}

\newaliascnt{proposition}{theorem}
\newtheorem{proposition}[proposition]{Proposition}
\aliascntresetthe{proposition}

\newaliascnt{conjecture}{theorem}
\newtheorem{conjecture}[conjecture]{Conjecture}
\aliascntresetthe{conjecture}

\newaliascnt{question}{theorem}

\aliascntresetthe{question}

\theoremstyle{definition}

\newaliascnt{definition}{theorem}
\newtheorem{definition}[definition]{Definition}
\aliascntresetthe{definition}

\newaliascnt{remark}{theorem}
\newtheorem{remark}[remark]{Remark}
\aliascntresetthe{remark}

\newaliascnt{example}{theorem}
\newtheorem{example}[example]{Example}
\aliascntresetthe{example}

\newaliascnt{notation}{theorem}
\newtheorem{notation}[notation]{Notation}
\aliascntresetthe{notation}

\newtheorem*{acknowledgements}{Acknowledgements}

\newif\ifhascomments \hascommentstrue
\ifhascomments
  \newcommand{\matt}[1]{{\color{red}[[\ensuremath{\spadesuit\spadesuit\spadesuit} #1]]}}
  \newcommand{\jeremy}[1]{{\color{red}[[\ensuremath{\clubsuit\clubsuit\clubsuit} #1]]}}
\else
  \newcommand{\matt}[1]{}
  \newcommand{\jeremy}[1]{}
\fi

\renewcommand{\setminus}{\smallsetminus}

\newcommand{\Z}{\mathbb{Z}}
\newcommand{\bZ}{\mathbb{Z}}

\newcommand{\QQ}{\mathbb{Q}}
\newcommand{\CC}{\mathbb{C}}

\newcommand{\cX}{\mathcal{X}}
\newcommand{\cY}{\mathcal{Y}}
\newcommand{\cZ}{\mathcal{Z}}
\newcommand{\cW}{\mathcal{W}}
\newcommand{\cI}{\mathcal{I}}
\newcommand{\cC}{\mathcal{C}}
\newcommand{\cD}{\mathcal{D}}
\newcommand{\cU}{\mathcal{U}}
\newcommand{\cO}{\mathcal{O}}
\newcommand{\cV}{\mathcal{V}}
\newcommand{\cF}{\mathcal{F}}

\newcommand{\cJ}{\mathscr{J}}
\newcommand{\cK}{\mathcal{K}}
\newcommand{\cE}{\mathcal{E}}

\newcommand{\cL}{\mathcal{L}}

\newcommand{\cR}{\mathcal{R}}

\newcommand{\sL}{\mathscr{L}}
\newcommand{\sM}{\mathscr{M}}

\newcommand{\sJ}{\mathscr{J}}
\newcommand{\sS}{\mathscr{S}}

\newcommand{\bL}{\mathbb{L}}
\newcommand{\bH}{\mathbb{H}}
\newcommand{\bG}{\mathbb{G}}
\newcommand{\bA}{\mathbb{A}}

\newcommand{\diff}{\mathrm{d}}

\newcommand{\id}{\mathrm{id}}
\newcommand{\Gor}{\mathrm{Gor}}

\DeclareMathOperator{\het}{ht}
\DeclareMathOperator{\nhet}{nht}
\DeclareMathOperator{\e}{e}
\DeclareMathOperator{\Spec}{Spec}

\DeclareMathOperator{\ord}{ord}

\DeclareMathOperator{\GL}{GL}

\DeclareMathOperator{\tors}{tors}

\DeclareMathOperator{\coh}{coh}

\DeclareMathOperator{\Fitt}{Fitt}
\DeclareMathOperator{\coker}{coker}
\DeclareMathOperator{\ordjac}{ordjac}
\DeclareMathOperator{\im}{im}
\DeclareMathOperator{\rank}{rank}
\DeclareMathOperator{\diag}{diag}
\DeclareMathOperator{\Div}{div}
\DeclareMathOperator{\SL}{SL}
\DeclareMathOperator{\Gr}{Gr}

\tikzset{cong/.style={draw=none,edge node={node [sloped, allow upside down, auto=false]{$\cong$}}},
         Isom/.style={above,every to/.append style={edge node={node [sloped, allow upside down, auto=false]{$\sim$}}}}}


\title[Crepant resolutions of log-terminal singularities via stacks]{Crepant resolutions of log-terminal singularities via Artin stacks}

\author{Matthew Satriano and Jeremy Usatine}

\thanks{MS was partially supported by a Discovery Grant from the National Science and Engineering Research Council of Canada as well as a Mathematics Faculty Research Chair from the University of Waterloo}

\address{Matthew Satriano, Department of Pure Mathematics, University of Waterloo}
\email{msatriano@uwaterloo.ca}

\address{Jeremy Usatine, Department of Mathematics, Brown University}
\email{jeremy{\_}usatine@brown.edu}

\begin{document}

\begin{abstract}
We prove that every variety with log-terminal singularities admits a crepant resolution by a smooth Artin stack. We additionally prove new McKay correspondences for resolutions by Artin stacks, expressing stringy invariants of $\mathbb{Q}$-Gorenstein varieties in terms of motivic integrals on arc spaces of smooth stacks. In the crepant case, these McKay correspondences are particularly simple, demonstrating one example of the utility of crepant resolutions by Artin stacks.
\end{abstract}

\maketitle

\numberwithin{theorem}{section}
\numberwithin{lemma}{section}
\numberwithin{corollary}{section}
\numberwithin{proposition}{section}
\numberwithin{conjecture}{section}
\numberwithin{question}{section}
\numberwithin{remark}{section}
\numberwithin{definition}{section}
\numberwithin{example}{section}
\numberwithin{notation}{section}

\setcounter{tocdepth}{1}
\tableofcontents



\section{Introduction}

The construction and utilization of crepant resolutions is a theme that underlies myriad subfields of algebraic geometry and mathematical physics. Since their introduction by Reid \cite{ReidCrepant}, they have inspired profound connections between birational geometry, derived categories, representation theory, mirror symmetry, and motivic integration.

The McKay correspondence, conjectured by Reid \cite{Reid} and proved by Batyrev \cite{Batyrev99}, asserts that if $G\subset\SL_n$ is a finite subgroup, $X = \CC^n/G$, and $Y\to X$ is a crepant resolution, then the Euler characteristic of $Y$ is equal to the number of conjugacy classes in $G$. 
Building on work of Kapranov--Vasserot \cite{KV}, Bridgeland--King--Reid showed in their groundbreaking paper \cite{BKR} that the McKay Correspondence for threefolds lifts to an equivalence of derived categories. 
This led Bondal and Orlov \cite{BO} to conjecture that all crepant resolutions of Gorenstein singularities have equivalent derived categories. 
This conjectural derived McKay correspondence has spurred fruitful connections between derived categories and the minimal model program \cite{BrideglandFlop,BridgelandStability}, geometric invariant theory \cite{CI,HalpernLeistner,BallardFaveroKatzarkov}, 
and non-commutative geometry \cite{vdB2004,vdB10,IW13,SpenkoVanDenBergh17}. Versions of the McKay correspondence have also had great influence in Gromov--Witten theory, Donaldson--Thomas theory, and mirror symmetry. For varieties with quotient singularities, the Crepant Resolution Conjecture \cite{Ruan06} compares the Chen--Ruan orbifold cohomology \cite{ChenRuan} to the Gromov--Witten theory of a crepant resolution when it exists. This has led to a flurry of activity in the subject \cite{BG,CCIT,CCITtoric,CIJ} as well as versions of the conjecture comparing Donaldson--Thomas invariants \cite{BCY,DTCRC}.


Of particular interest in this paper are the connections between crepant resolutions and motivic integration. The theory of motivic integration was pioneered by Kontesevich \cite{Kontsevich} in his proof that birational smooth Calabi--Yau varieties have equal Hodge numbers. The field has since had broad ranging applications in birational geometry, mirror symmetry, and the study of singularities. Central to these applications is a \emph{motivic change of variables formula} relating the motivic measures of $Y$ and $X$ under a birational modification $X \to Y$, see e.g., \cite{DenefLoeser1999, Looijenga}. Motivated by mirror symmetry for singular Calabi-Yau varieties, Batyrev \cite{Batyrev1998} introduced the \emph{stringy Hodge numbers} for varieties with \emph{log-terminal singularities}; these are defined in terms of the combinatorial information of a resolution of singularities rather than the dimensions of cohomology groups. Whenever $Y$ admits a crepant resolution $X\to Y$, 
the stringy Hodge numbers of $Y$ agree with their classical counterparts on $X$.
 In \cite{DenefLoeser2002}, Denef and Loeser refined the stringy Hodge numbers by introducing the \emph{Gorenstein measure} $\mu^\Gor_Y$ on the arc scheme $\sL(Y)$; this measure assumes values in a modified Grothendieck ring of varieties and recovers the stringy Hodge numbers after a suitable specialization. By proving a motivic change of variables formula for $\mu^\Gor_Y$, Denef and Loeser obtained a motivic version of the McKay correspondence, refining the original version proved by Batyrev \cite{Batyrev99}.

In order to study stringy Hodge numbers of $\QQ$-Gorenstein quotient singularities, Yasuda in his beautiful paper \cite{Yasuda2004} (see also \cite{Yasuda2006, Yasuda2019}) considered crepant resolutions by smooth Deligne--Mumford stacks. Specifically, given $Y$ with $\QQ$-Gorenstein quotient singularities, Vistoli \cite{Vistoli89} constructed a canonical small (hence crepant) resolution $\pi\colon\cX\to Y$ with $\cX$ smooth Deligne--Mumford. By proving a motivic change of variables formula for $\pi$, Yasuda expressed $\mu^\Gor_Y$ as a certain motivic integral over arcs of $\cX$, from which he deduced that the stringy Hodge numbers of $Y$ agree with the orbifold Hodge numbers of $\cX$.

Outside the context of $\QQ$-Gorenstein quotient singularities, stringy Hodge numbers are generally not well understood. Since varieties with log-terminal singularities 
do not usually admit crepant resolutions by Deligne--Mumford stacks, if one wishes to glean information about such varieties through stack-theoretic methods, it is necessary to consider Artin stacks. Indeed, Example \ref{ex:noDM-or-NCCR-res} gives an explicit log-terminal variety that has a small resolution by an Artin stack, yet has no crepant resolution by a Deligne--Mumford stack and no non-commutative crepant resolution (NCCR).

One is therefore in need of more general existence theorems for crepant resolutions and more general versions of the motivic change of variables formula in the context of smooth Artin stacks. The use of smooth Artin stacks in the study of singularities has already had great success in several areas of algebraic geometry:~Halpern--Leistner \cite{HalpernLeistner} and Ballard--Favero--Katzarkov \cite{BallardFaveroKatzarkov} made significant progress on the derived McKay correspondence by comparing derived categories of varieties with an ambient Artin stack coming from a variation of GIT; Artin stacks have played an essential role in recent functorial resolution of singularities algorithms \cite{Quek,AbramovichQuekLogRes} building on previous stack-theoretic resolution algorithms of \cite{AbramovichTemkinWlodarczyk,AbramovichTemkinWlodarczyk2, McQuillanMarzo}; finally, Artin stacks have been impressively applied by Quek \cite{Quekmonodromy} in the study of the monodromy conjecture.




\vspace{0.5em}

In this paper, we:
\begin{enumerate}
\item Prove that all log-terminal singularities have crepant resolutions by smooth Artin stacks (Theorem \ref{thm:crepant-log-terminal}).
\item Prove a motivic change of variables formula for resolutions by smooth Artin stacks (Theorem \ref{thm:main-mcvf}). This formula involves the relative canonical divisor of the resolution and is therefore particularly simple in the crepant case.
\end{enumerate}
Thus, crepant resolutions exist in the largest possible generality one needs from the perspective of motivic McKay correspondence. (See Subsection \ref{subsec:statement-results} for precise definitions.)

\begin{theorem}\label{thm:crepant-log-terminal}
Let $Y$ be an irreducible variety with log-terminal singularities over an algebraically closed field $k$ of characteristic $0$. Then there exists a smooth finite type Artin stack $\cX$ over $k$ with affine diagonal and a crepant resolution $\pi\colon\cX\to Y$ which is an isomorphism over the smooth locus $Y^{\mathrm{sm}}$.
\end{theorem}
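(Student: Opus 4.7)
My plan is to start from a log resolution $f\colon Z\to Y$, which exists in characteristic zero by Hironaka, with simple normal crossings exceptional divisor $E=\sum_i E_i$ and $K_Z=f^*K_Y+\sum_i a_i E_i$; log-terminality gives $a_i\in\QQ_{>-1}$. I would then modify $Z$ stack-theoretically along each $E_i$, attaching a prescribed stacky structure tuned to kill the discrepancy $a_i$ after pullback. The output will be a smooth Artin stack $\cX$ equipped with a canonical proper birational morphism $\pi\colon\cX\to Y$ that is crepant and an isomorphism over $Y^{\sm}$, since all modifications are supported on the exceptional locus of $f$.

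The construction is local and toric. Étale-locally around a transverse intersection of $E_{i_1},\dots,E_{i_r}$ we are in $\bA^n$ with snc boundary $\{y_1\cdots y_r=0\}$, and the modification should be a smooth toric Artin stack of the form $[\bA^n/T]$ for an appropriate $T\subset\mathbb{G}_m^r$ whose weights are dictated by writing $a_{i_j}+1=p_j/q_j$ in lowest terms. When each $p_j=1$ a plain $q_j$-th root stack (a DM stack) suffices: pulling back $E_{i_j}$ to $q_j\widetilde{E}_{i_j}$ and adding the $(q_j-1)\widetilde{E}_{i_j}$ contribution from $K_{\cX/Z}$ yields discrepancy $q_j(a_{i_j}+1)-1=0$ along $\widetilde{E}_{i_j}$. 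For $p_j>1$ (in particular whenever $a_{i_j}\ge 0$) one must further introduce $\mathbb{G}_m$-stabilizers along the relevant ray of the stacky fan, with weight encoding $p_j$, genuinely leaving the DM world. Because each local model depends only on the combinatorial data $(E_{i_j},a_{i_j})$ and is functorial under étale toric changes of coordinates, the local pieces glue to a global smooth finite-type Artin stack $\cX$ with a morphism $\pi\colon\cX\to Z\to Y$. Smoothness follows from the quotient-of-smooth-by-torus local description, finite type from the finite number of charts covering $Z$, and affine diagonal from presenting each local model as a quotient of an affine scheme by an affine group.

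The main obstacle will be handling the $p_j>1$ case, i.e., positive discrepancies and those negative discrepancies not of the form $-1+1/q$. One must exhibit a concrete smooth toric Artin stack with $\mathbb{G}_m$-stabilizers that (a) is smooth with affine diagonal, (b) realizes the claimed discrepancy computation $q_j(a_{i_j}+1)-p_j$, and (c) combines compatibly with the stacky structures along the other $E_j$ it meets, so that the global gluing still has affine diagonal. Once the correct Artin-stack local model is pinned down and its discrepancy verified, the remaining verifications—properness, birationality, isomorphism over $Y^{\sm}$, and crepancy—are formal consequences of the construction.
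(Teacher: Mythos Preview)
Your high-level strategy coincides with the paper's: take a log resolution $p\colon Z\to Y$ with snc exceptional divisor $\sum D_i$ and discrepancies $m_i>-1$, then build a smooth Artin stack $\cX\to Z$ whose relative canonical divisor is $-\sum m_i\pi^*D_i$, so that $K_{\cX/Y}=K_{\cX/Z}+\pi^*K_{Z/Y}=0$. You also correctly isolate the crux---root stacks alone handle only discrepancies of the form $-1+1/q$, and genuine positive-dimensional stabilizers are needed in general---and you are honest that producing the local model for the $p_j>1$ case and verifying its discrepancy is the main obstacle.

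Where the paper diverges from your outline is precisely at this obstacle, and its solution is quite different from your toric proposal. The building block is not $[\bA^n/T]$ for a torus $T$ but the \emph{non-abelian} quotient $\sM_r=[\bA^{r^2}/\GL_r]$, interpreted as the moduli stack of morphisms $\cO^{\oplus r}\to\cE$ to a rank-$r$ bundle, equipped with its determinant map $\det\colon\sM_r\to[\bA^1/\bG_m]$ composed with a $d$-th root stack. Writing $m_i+1=r_i/d_i$, one forms $\cX$ as the fiber product of $\prod_i\sM_{r_i+1}$ over $[\bA^1/\bG_m]^n$ with $Z$, where $Z\to[\bA^1/\bG_m]^n$ classifies the sections cutting out the $D_i$ (Theorem~\ref{thm:anti-snc}). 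This is a single global moduli-theoretic construction (Remark~\ref{rmk:anti-snc}); no \'etale gluing of local toric charts is required, and affine diagonal follows immediately from the global quotient presentation.

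The discrepancy computation is also not direct. The key identity $K_{\sM_r/[\bA^1/\bG_m]}=(1-r)\cD$ (Proposition~\ref{prop:relative-canonical-Mr}) is proved by \emph{applying the motivic change of variables formula} (Theorem~\ref{thm:main-mcvf}) to explicit cylinders in $|\sL(\sS_r)|$ and $\sL(\bA^1)$, computing both measures by hand (Lemma~\ref{l:measures-cylinders-for-mcvf}), and reading off the exponent of $\bL$. Thus the existence theorem is logically downstream of the paper's motivic integration machinery, not a bare-hands cotangent-complex calculation. If you pursue the toric route, you would still owe either an explicit stacky fan realizing $K_{\cX/\bA^1}=-m\,\pi^*\{0\}$ for arbitrary rational $m>-1$ together with a verification of that formula, or an analogous indirect argument; the paper does not address whether such toric models exist.
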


Interestingly, the proof of the existence of this resolution relies on our motivic change of variables formula (Theorem \ref{thm:main-mcvf}). We construct $\cX$ via a moduli space parameterizing degenerations of framed bundles. To prove $\cX$ is crepant, we compute the Gorenstein measure $\mu_Y^{\Gor}$ and the motivic measure $\mu_\cX$ on appropriately chosen cylinders; since the relative canonical divisor $K_{\cX/Y}$ arises as the natural discrepancy term in our change of variables formula, by comparing the values of the measures, we are able to conclude $K_{\cX/Y}=0$.

Theorem \ref{thm:main-mcvf} builds on our previous work \cite[Theorem 1.3]{SatrianoUsatine2}, where we proved a general motivic change of variables formula for Artin stacks relating $\mu_\cX$ and $\mu_Y$ (as opposed to $\mu_Y^{\Gor}$). This made use of our previous work \cite{SatrianoUsatine} where we established such a formula for toric stacks as well as laid the foundation for motivic integration on Artin stacks. 


For applications to stringy invariants, one is most interested in replacing $\mu_Y$ with $\mu_Y^{\Gor}$. Ideally, one would hope to prove a motivic change of variables formula of the form
\begin{equation}\label{eqn:intro-mcvf}
\mu_Y^{\Gor}(D) = \int_\cC \bL^{(-1/m)\ord_{mK_{\cX/Y}}} \diff\mu_\cX
\end{equation}
when $Y$ is $m$-Gorenstein 
and $D$ and $\cC$ are appropriate cylinders. Several technical hurdles arise here. The first is a minor one:~there is no established definition of a canonical divisor $K_\cX$ for Artin stacks, so one does not \emph{a priori} know what the right hand side of \eqref{eqn:intro-mcvf} should be; 
the definition we take is the obvious one if one defines the canonical sheaf $\omega_\cX$ to be the determinant of the cotangent complex $\det(L_\cX)$. Second, even in the simplest case where $\pi\colon\cX\to Y$ is a small resolution (and $mK_{\cX/Y}=0$ regardless of what definition of $mK_{\cX/Y}$ one takes), it is not at all clear that \eqref{eqn:intro-mcvf} holds, i.e., it is not obvious that $\mu_Y^{\Gor}(D)=\mu_\cX(\cC)$. 
The reason for this is that in the motivic change of variables formula \cite[Theorem 1.3]{SatrianoUsatine2} relating $\mu_Y$ and $\mu_\cX$, the natural correction factor that arises is given by a difference of height functions of certain cotangent complexes.\footnote{The definition of height function we introduced was inspired by \cite{ESZB} where the number-theoretic Weil height functions were extended to the case of vector bundles on stacks.} Unlike the scheme case, this difference is controlled by the hypercohomologies of \emph{complexes} of sheaves (as opposed to a single sheaf), and it is no longer apparent how to relate this difference to an invertible ideal sheaf.

In Theorem \ref{thm:main-mcvf}, we prove the motivic change of variables \eqref{eqn:intro-mcvf} and, in fact, show that there exists a \emph{unique} divisor $mK_{\cX/Y}$ for which \eqref{eqn:intro-mcvf} holds. 
This therefore allows us to express the stringy invariants of $Y$ as a motivic integral over arcs of $\cX$. In particular, this proves \cite[Conjecture 1.1]{SatrianoUsatine} when $\cC\to C$ bijective. This is part of our ongoing program to express the stringy Hodge numbers of $Y$ as dimensions of a Chen--Ruan style cohomology theory on $\cX$.

It is somewhat surprising that the motivic change of variables formula \eqref{eqn:intro-mcvf} takes such a simple form. Indeed, our stack-theoretic motivic change of variables for $\mu_Y$ (\cite[Theorem 1.3]{SatrianoUsatine2}) is more complicated than its scheme-theoretic counterpart due to the presence of the non-vanishing height function $\het^{(1)}_{L_{\cX/Y}}$. Given that the change of variables formula for $\mu_Y^{\Gor}$ relies on the one for $\mu_Y$, one may have expected a formula whose right hand side has several terms in the exponent of $\bL$. 



Finally, given the ubiquity of crepant resolutions, we hope Theorem \ref{thm:crepant-log-terminal} will be useful beyond the scope of motivic integration.





\subsection{Statement of further results} \label{subsec:statement-results}

Throughout this paper, $k$ is an algebraically closed field of characteristic 0. Ultimately, we are interested in resolving a singular variety $Y$ by a smooth Artin stack $\cX$. Our results will apply for resolutions $\pi\colon\cX\to Y$ which obey a weak form of birationality that we now introduce.

\begin{definition}
We say a morphism $\cX \to \cY$ of Artin stacks is \emph{weakly birational} if there exists a dense open substack $\cU \hookrightarrow \cX$ such that the composition $\cU \hookrightarrow \cX \to \cY$ is an open immersion with dense image.
\end{definition}

We also define a stronger version of birationality.

\begin{definition}
A morphism $\cX \to \cY$ of Artin stacks is \emph{strongly birational} if there exists a dense open substack $\cV \hookrightarrow \cY$ such that $\cV \times_\cY \cX \hookrightarrow \cX$ has dense image and $\cV \times_\cY \cX \to \cV$ is an isomorphism.
\end{definition}

\begin{remark}
An example where these two definitions differ is the good moduli space map $[\bA^1_k / \bG_{m,k}] \to \Spec(k)$; this is weakly birational but not strongly birational.
\end{remark}

Throughout the rest of the introduction, we fix the following notation.

\begin{notation}\label{running-notation}
Let $\cX$ be a smooth irreducible finite type Artin stack over $k$, let $Y$ be 
a finite type scheme over $k$, let $m \in \Z_{>0}$ be such that $Y$ is $m$-Gorenstein, and let $\pi: \cX \to Y$ be a weakly birational morphism. Fix a non-empty open substack $\cU \hookrightarrow \cX$ such that $\cU \hookrightarrow \cX \to Y$ is an open immersion and let $\eta: \Spec k(\cX) \to \cX$ be the generic point of $\cX$, see Definition \ref{def:generic-pt-stack}.
\end{notation}

Since $\cX$ is smooth, the cotangent complex $L_\cX:=L_{\cX/k}$, as defined in \cite{Olsson2007}, is perfect; hence its determinant $\det(L_\cX)$ is a well-defined invertible sheaf, which we denote by $\omega_\cX$. We may now introduce our definition of the relative canonical divisor.

\begin{definition}\label{def:rel-canonical-divisor}
The $m$th \emph{relative canonical sheaf} $\omega_{\cX/Y, m}$ 
is defined by
\[
	\omega_{\cX/Y, m} = \omega_\cX^{\otimes m} \otimes \pi^*\omega_{Y,m}^\vee.
\]
The $m$th \emph{relative canonical divisor} $mK_{\cX/Y}$ is the Cartier divisor associated to the canonical isomorphism $\eta^*\omega_{\cX/Y, m} \xrightarrow{\sim} \cO_{k(\cX)}$, see Proposition \ref{prop:rat-sec-->Cartier}.
\end{definition}

\begin{remark}\label{rmk:mKXY-support}
Weak birationality of $\pi$ ensures that $\omega_{\cX/Y, m}$ is canonically trivialized 
when pulled back by $\eta$.  Furthermore, Proposition \ref{prop:rat-sec-->Cartier} tells us that $mK_{\cX/Y}$ satisfies:
\begin{itemize}

\item $\cO(mK_{\cX/Y}) \cong \omega_{\cX/Y, m}$.

\item 
$mK_{\cX/Y}$ is supported on the complement of $\cU$.
\end{itemize}
\end{remark}

With our definition of $mK_{\cX/Y}$, we may now extend the notion of crepant resolutions to the case of Artin stacks.

\begin{definition}\label{def:crepant}
The map $\pi\colon\cX\to Y$ is \emph{crepant} if $mK_{\cX/Y}=0$. 
We say $\pi$ is a \emph{crepant resolution} if $\pi$ is crepant, strongly birational, and factors as $\cX\xrightarrow{\pi_1}\cY\xrightarrow{\pi_2} Y$ where $\pi_1$ is a good moduli space map and $\pi_2$ is proper. (Recall from Notation \ref{running-notation} that $\cX$ is smooth.)
\end{definition}

\begin{remark}\label{rmk:crepant-distinction}
An important distinction to note is that triviality of $mK_{\cX/Y}$ is not equivalent to triviality of $\omega_{\cX/Y}$ even when $\cX$ is a variety (and $\pi$ is not proper), see Example \ref{ex:crepant-distinction}.
\end{remark}

Given an effective Cartier divisor $\cD$ on $\cX$, we define its \emph{order function} to be the order function of its ideal sheaf $\ord_\cD:=\ord_{\cO(-\cD)}$. For arbitrary Cartier divisors $\cD=\cD_+-\cD_-$ with $\cD_\pm$ effective, we define $\ord_\cD:=\ord_{\cD_+}-\ord_{\cD_-}$.

We may now state our motivic change of variables formula, see \S\ref{conventions} for further notation.

\begin{theorem}\label{thm:main-mcvf}
Using Notation \ref{running-notation}, if $\cX$ has affine geometric stabilizers and separated diagonal, then
\begin{enumerate}[(a)]

\item For any cylinder $\cC \subset |\sL(\cX)| \setminus |\sL(\cX \setminus \cU)| \subset |\sL(\cX)|$, the function
\[
	\ord_{mK_{\cX/Y}}: \cC \to \Z
\]
is constructible.\vspace{0.4em}

\item\label{mcvf-main-part} If $\cC \subset |\sL(\cX)| \setminus |\sL(\cX \setminus \cU)| \subset |\sL(\cX)|$ and $D \subset \sL(Y)$ are cylinders such that $\overline{\cC}(k') \to D(k')$ is a bijection for all field extensions $k'$ of $k$, then
\[
	\mu_Y^{\Gor}(D) = \int_{\cC} \bL^{(-1/m)\ord_{mK_{\cX/Y}}} \diff\mu_\cX \in \widehat{\sM}_k[\bL^{1/m}].
\]
\end{enumerate}
\end{theorem}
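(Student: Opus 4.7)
The plan is to bootstrap from the stack-theoretic motivic change of variables for $\mu_Y$ proved in \cite[Theorem 1.3]{SatrianoUsatine2}. Applying that result under the bijectivity hypothesis on $\overline{\cC}(k') \to D(k')$ gives an identity
\[
    \mu_Y(D) \;=\; \int_\cC \bL^{-h} \, \diff\mu_\cX,
\]
where $h$ is the cotangent-complex height discrepancy appearing in loc.~cit.~(namely, a difference of height functions $\het^{(1)}$). Unwinding the definition of the Gorenstein measure gives
\[
    \mu_Y^{\Gor}(D) \;=\; \int_D \bL^{(-1/m)\ord_{\omega_{Y,m}}} \, \diff\mu_Y,
\]
and since $\ord_{\omega_{Y,m}}$ pulled back along $\pi_*$ agrees with $\ord_{\pi^*\omega_{Y,m}}$ on arcs in $\cC$, combining the two yields
\[
    \mu_Y^{\Gor}(D) \;=\; \int_\cC \bL^{-h + (-1/m)\ord_{\pi^*\omega_{Y,m}}} \, \diff\mu_\cX.
\]

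The crux of the proof, and the main obstacle, is to identify the combined exponent with $(-1/m)\ord_{mK_{\cX/Y}}$. Since $\cX$ is smooth, $L_\cX$ is perfect, and the distinguished triangle $\pi^*L_Y \to L_\cX \to L_{\cX/Y}$ produces a canonical identification $\det(L_\cX)^{\otimes m} \otimes \pi^*\det(L_Y)^{\otimes -m}$ matching $\omega_{\cX/Y, m}$ with the appropriate power of $\det(L_{\cX/Y})$ over $\cU$. I would reduce by étale descent to a smooth atlas of $\cX$ and show, via a $K$-theoretic interpretation of $\het^{(1)}$, that for an arc $\alpha \in \cC$ the value $m\cdot h(\alpha) + \ord_{\pi^*\omega_{Y,m}}(\alpha)$ equals the order of vanishing of the canonical generic trivialization of $\omega_{\cX/Y, m}$ along $\alpha$, which is $\ord_{mK_{\cX/Y}}(\alpha)$ by Definition \ref{def:rel-canonical-divisor}. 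The delicate point, emphasized in the introduction, is that $h$ is a priori controlled by the \emph{hypercohomology} of the two-term complex $L_{\cX/Y}$; one must argue that on arcs in $\cC$ this hypercohomology collapses to the order of a single invertible sheaf. This uses crucially that $\cC$ avoids $|\sL(\cX \setminus \cU)|$, so that along such arcs the higher cohomology contributions in the height computation vanish and only the determinantal term survives.

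Part (a) is then essentially a consequence: by Remark \ref{rmk:mKXY-support}, $mK_{\cX/Y}$ is a Cartier divisor supported on $\cX \setminus \cU$, so on any cylinder $\cC$ avoiding $|\sL(\cX \setminus \cU)|$ the order function $\ord_{mK_{\cX/Y}}$ is a difference of order functions of effective invertible ideal sheaves whose zero loci miss the arcs in $\cC$; constructibility follows from the general theory of cylinders and order functions on arc spaces of Artin stacks developed in \cite{SatrianoUsatine}. In practice I would establish (a) first and feed it into the integral manipulation above. Finally, the uniqueness alluded to in the introduction — that $mK_{\cX/Y}$ is the \emph{unique} divisor making \eqref{eqn:intro-mcvf} hold — follows from the observation that the constructible function $\ord_{mK_{\cX/Y}}$ on sufficiently many cylinders in $\sL(\cX)$ determines the underlying Cartier divisor.
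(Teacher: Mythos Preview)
Your overall architecture is right: the paper also deduces Theorem~\ref{thm:main-mcvf} by combining the earlier change of variables for $\mu_Y$ (reformulated as Theorem~\ref{theoremChangeOfVariables-reformulation}) with the definition of $\mu_Y^{\Gor}$, and the whole content lies in the identity relating the exponent to $\ord_{mK_{\cX/Y}}$ (Theorem~\ref{thm:main-canonical-divisor}). But two of your steps contain genuine errors.

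First, the Gorenstein measure is not governed by $\ord_{\omega_{Y,m}}$. The sheaf $\omega_{Y,m}$ is an invertible sheaf with no canonical global section, so there is no intrinsic order function attached to it. What enters $\mu_Y^{\Gor}$ is the ideal $\cJ_{Y,m}$, defined by requiring the image of $(\Omega^d_Y)^{\otimes m}\to\omega_{Y,m}$ to be $\cJ_{Y,m}\,\omega_{Y,m}$. The correct intermediate identity is therefore
\[
\mu_Y^{\Gor}(D)=\int_{\cC}\bL^{-(1/m)\ord_{\cJ_{Y,m}}\circ\sL(\pi)}\,\bL^{-(\het^{(0)}_{L_{\cX/Y}}-\het^{(1)}_{L_{\cX/Y}})}\,\diff\mu_\cX,
\]
and the task is to show $\ord_{mK_{\cX/Y}} = m\het^{(0)}_{L_{\cX/Y}} - m\het^{(1)}_{L_{\cX/Y}} - \ord_{\cJ_{Y,m}}\circ\sL(\pi)$. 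Replacing $\cJ_{Y,m}$ by $\omega_{Y,m}$ loses exactly the term that makes the identity nontrivial.

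Second, and more seriously, your mechanism for the ``crux'' is wrong. You assert that on arcs in $\cC$ the higher cohomology contributions vanish and only a determinantal term survives. But an arc in $|\sL(\cX)|\setminus|\sL(\cX\setminus\cU)|$ merely has its \emph{generic} point in $\cU$; its closed point may lie in $\cX\setminus\cU$, and the heights $\het^{(0)}_{L_{\cX/Y}}(\varphi)=\dim_{k'}L^0\varphi^*L_{\cX/Y}$ and $\het^{(1)}_{L_{\cX/Y}}(\varphi)=\dim_{k'}L^1\varphi^*L_{\cX/Y}$ are both typically nonzero. The nonvanishing of $\het^{(1)}$ is precisely the new phenomenon for Artin stacks emphasized in the introduction; it does not disappear by restricting to $\cC$. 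So there is no shortcut in which ``hypercohomology collapses to a single invertible sheaf.''

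What the paper actually does for this step is substantial. On a smooth cover $\rho\colon\widetilde{X}\to\cX$ one forms the three-term complex $\cE_{\widetilde{X}/\cX/Y}=[\rho^*\pi^*\Omega^1_Y\xrightarrow{d_0}\Omega^1_{\widetilde{X}}\xrightarrow{d_1}\Omega^1_{\widetilde{X}/\cX}]$ and introduces the Fitting ideals $\cJ_0=\Fitt_r(\coker d_0)$ and $\cJ_1=\Fitt_0(\coker d_1)$. Proposition~\ref{prop:ordJi-computes-hts} shows $\ord_{\cJ_i}=\het^{(i)}_{L_{\cX/Y}}\circ\sL(\rho)$ for $i=0,1$ via an elementary PID argument. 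The heart is then Theorem~\ref{thm:Fitting-ideals-key-relation}, an equality of subsheaves
\[
\cJ_0^{\,m}(\Omega^{d+r}_{\widetilde{X}})^{\otimes m}=\cJ_{Y,m}\,\cJ_1^{\,m}\bigl(\rho^*\pi^*\omega_{Y,m}\otimes(\Omega^r_{\widetilde{X}/\cX})^{\otimes m}\bigr),
\]
proved by constructing two maps $\alpha_1,\alpha_2$ into $\widetilde{\jmath}_*(\Omega^{d+r}_{\widetilde{U}})^{\otimes m}$ and showing their images coincide. This is the replacement for your ``$K$-theoretic interpretation''; without it the identity of exponents does not follow. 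Your treatment of part~(a) is essentially fine, though in the paper constructibility is read off directly from Theorem~\ref{thm:main-canonical-divisor} together with Theorem~\ref{theoremChangeOfVariables-reformulation}(a).
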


Since small resolutions are crepant by Remark \ref{rmk:mKXY-support}, we obtain the following immediate corollary of Theorem \ref{thm:main-mcvf}.

\begin{corollary}\label{cor:main-crepant}
With hypotheses as in Theorem \ref{thm:main-mcvf}, if $\pi$ is crepant, then 
\[
\mu_Y^{\Gor}(D) = \mu_{\cX}(\cC).
\]
In particular, this holds if $\pi$ is a small resolution.
\end{corollary}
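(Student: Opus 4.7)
The plan is to observe that this is a direct specialization of Theorem \ref{thm:main-mcvf}(\ref{mcvf-main-part}) together with the trivial-vanishing remark for small resolutions.

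First I would handle the main statement: by Definition \ref{def:crepant}, $\pi$ crepant means $mK_{\cX/Y} = 0$ as a Cartier divisor on $\cX$. Its associated ideal sheaf $\cO(-mK_{\cX/Y})$ is then the structure sheaf, so the order function $\ord_{mK_{\cX/Y}} \colon \cC \to \Z$ is identically zero. Hence
\[
\bL^{(-1/m)\ord_{mK_{\cX/Y}}} \equiv \bL^0 = 1
\]
on $\cC$, and Theorem \ref{thm:main-mcvf}(\ref{mcvf-main-part}) reduces to
\[
\mu_Y^{\Gor}(D) = \int_{\cC} 1\, \diff\mu_\cX = \mu_\cX(\cC),
\]
which is the desired equality.

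For the ``in particular'' clause I would invoke Remark \ref{rmk:mKXY-support}: a small resolution $\pi\colon\cX\to Y$ is an isomorphism away from a closed substack of codimension $\geq 2$, so we may take $\cU \hookrightarrow \cX$ with complement of codimension at least $2$. By Remark \ref{rmk:mKXY-support}, $mK_{\cX/Y}$ is a Cartier divisor supported on $\cX \setminus \cU$. A nonzero Cartier divisor is supported in pure codimension one, so $mK_{\cX/Y}$ must vanish, i.e., $\pi$ is crepant. The main statement then applies.

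There is essentially no obstacle here beyond correctly interpreting Definition \ref{def:crepant} and Remark \ref{rmk:mKXY-support}; the only subtlety worth checking is that the definition of $\ord_{\cD}$ for a general Cartier divisor $\cD = \cD_+ - \cD_-$ given just before Theorem \ref{thm:main-mcvf} indeed returns the zero function when $\cD = 0$, which is immediate from $\ord_{\cD_+} = \ord_{\cD_-} = 0$ for $\cD_\pm = 0$.
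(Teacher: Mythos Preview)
Your proposal is correct and matches the paper's approach exactly: the paper states that the corollary is immediate from Theorem \ref{thm:main-mcvf}, together with the observation (via Remark \ref{rmk:mKXY-support}) that small resolutions are crepant. Your write-up simply makes explicit the trivial computations that the paper leaves implicit.
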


\begin{remark}
By Theorem \ref{thm:crepant-log-terminal}, Corollary \ref{cor:main-crepant} is applicable to every variety with log-terminal singularities.
\end{remark}


The main difficulty in proving Theorem \ref{thm:main-mcvf} is showing that $\ord_{mK_{\cX/Y}}$ differs from the correction factor in \cite[Theorem 1.3]{SatrianoUsatine2} precisely by the order function of the ideal $\cJ_{Y,m}$, the unique ideal sheaf such that the image of $(\Omega_Y^{\dim Y})^{\otimes m}\to\omega_{Y,m}$ is given by $\cJ_{Y,m}\omega_{Y,m}$. After a minor reformulation of \cite[Theorem 1.3]{SatrianoUsatine2} given in Theorem \ref{theoremChangeOfVariables-reformulation}, this amounts to showing the following formula. 
See Definition 3.1 of (loc.~cit) for the definition of $\het^{(i)}_{L_{\cX/Y}}$.

\begin{theorem}\label{thm:main-canonical-divisor}
Using Notation \ref{running-notation},
\[
	\ord_{mK_{\cX/Y}} = m \het^{(0)}_{L_{\cX/Y}} - m \het^{(1)}_{L_{\cX/Y}} -\ord_{\sJ_{Y,m}} \circ \sL(\pi).
\]
on $|\sL(\cX)| \setminus |\sL(\cX \setminus \cU)|$.
\end{theorem}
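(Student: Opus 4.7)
The plan is to verify the identity pointwise on arcs. Fix $\varphi \in |\sL(\cX)| \setminus |\sL(\cX \setminus \cU)|$ and set $\psi := \sL(\pi)(\varphi)$; the hypothesis guarantees that the generic point of $\varphi$ lies in $\cU$, and hence $\psi$ factors generically through $\pi(\cU) \subseteq Y^{\mathrm{sm}}$. All quantities involved are insensitive to passing to a smooth cover of $\cX$ and lifting $\varphi$ (as in \cite{SatrianoUsatine2}), so we may assume $\varphi : \Spec R \to \cX$ is an honest scheme-theoretic arc valued in a DVR $R$.

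The first ingredient is that $\omega_{\cX/Y,m}$ carries a canonical generic trivialization coming from the isomorphism $L_\cX|_\cU \cong \pi^*L_Y|_\cU$ (valid because $\cU \to Y$ is an open immersion into $Y^{\mathrm{sm}}$), and by Definition \ref{def:rel-canonical-divisor}, $\ord_{mK_{\cX/Y}}(\varphi)$ is the $t$-adic valuation of this trivialization after pullback. The strategy is to factor the trivialization through the auxiliary sheaf $\pi^*(\Omega_Y^n)^{\otimes m}$ via two natural maps. The $Y$-side map
\[
	\alpha \colon \pi^*(\Omega_Y^n)^{\otimes m} \longrightarrow \pi^*\omega_{Y,m}
\]
has image $\pi^*\sJ_{Y,m}\cdot \pi^*\omega_{Y,m}$ by the defining property of $\sJ_{Y,m}$. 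The $\cX$-side map
\[
	\beta \colon \pi^*(\Omega_Y^n)^{\otimes m} \otimes (\det H^1 L_\cX)^{\otimes -m} \longrightarrow \omega_\cX^{\otimes m}
\]
is constructed by taking the top exterior power of $\pi^*\Omega_Y \to H^0 L_\cX$ and invoking the identification $\omega_\cX = \det L_\cX = \det H^0 L_\cX \otimes (\det H^1 L_\cX)^{\vee}$ (the $H^1$ twist accounts for Artin stackiness of $\cX$). On $\cU$, both $\alpha$ and $\beta$ are isomorphisms that are compatible with the generic trivialization of $\omega_{\cX/Y,m}$.

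After pullback to $R$, both $\alpha$ and $\beta$ become maps of $R$-modules that are isomorphisms on the generic fiber. A Smith normal form computation (equivalently, tracking Fitting ideals over the DVR $R$) then gives
\[
	\ord_{mK_{\cX/Y}}(\varphi) \;=\; \len_R(\coker \varphi^*\beta) \;-\; \ord_{\sJ_{Y,m}}(\psi),
\]
since the length of $\coker \varphi^*\alpha$ is precisely $\ord_{\sJ_{Y,m}}(\psi)$. It remains to show $\len_R(\coker \varphi^*\beta) = m\het^{(0)}_{L_{\cX/Y}}(\varphi) - m\het^{(1)}_{L_{\cX/Y}}(\varphi)$. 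For this, I would pull the distinguished triangle $\pi^*L_Y \to L_\cX \to L_{\cX/Y}$ back to $R$: the third term has cohomology that is torsion (it vanishes generically because $\cU \to Y$ is étale), and appealing to \cite[Definition~3.1]{SatrianoUsatine2} together with multiplicativity of determinants across distinguished triangles, the alternating $m$-weighted sum of heights of $L_{\cX/Y}$ equals, up to the $H^1 L_\cX$ twist already incorporated into $\beta$, exactly $\len_R(\coker \varphi^*\beta)$.

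\textbf{Main obstacle.} The central difficulty will be the final identification, because $L_Y$ is not perfect (as $Y$ is singular) and so $\det L_Y$ does not exist globally on $Y$. The way around this is that the entire computation occurs after pullback along $\varphi$, where $\varphi^*L_Y$ is an object of $D(R)$ whose cohomology can be analyzed via Smith normal form over the DVR; the discrepancy between the honest determinant on the $\cX$-side and the putative determinant on the $Y$-side is measured exactly by $\sJ_{Y,m}$, which is precisely why this ideal appears in the formula.
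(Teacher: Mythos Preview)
Your strategy of factoring through $\pi^*(\Omega_Y^d)^{\otimes m}$ via two routes mirrors the paper's, but the identity $\len_R(\coker\varphi^*\beta)=m\het^{(0)}_{L_{\cX/Y}}(\varphi)-m\het^{(1)}_{L_{\cX/Y}}(\varphi)$ cannot hold: the left side is a length, hence non-negative, while the right side is strictly negative in Example~\ref{ex:KcXY-not-effective} (there $Y=\bA^1$ is smooth, so $\ord_{\sJ_{Y,m}}=0$, yet $\ord_{K_{\cX/Y}}=-2$ on the cylinder $\cC$). The root cause is that the cohomology sheaves $H^iL_\cX$ are not locally free on an Artin stack, so $\det H^1L_\cX$ is not a line bundle and your $\beta$ is not well-defined as written. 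After passing to a smooth cover $\rho\colon\widetilde{X}\to\cX$, one represents $\rho^*L_\cX$ by the two-term complex $[\Omega^1_{\widetilde{X}}\xrightarrow{d_1}\Omega^1_{\widetilde{X}/\cX}]$ of genuine vector bundles; the stacky phenomenon is that $d_1$ is \emph{not surjective}, and its failure to be surjective is precisely what produces $\het^{(1)}$. No map built solely from $d_0\colon\rho^*\pi^*\Omega^1_Y\to\Omega^1_{\widetilde{X}}$ can see this, so a single cokernel length cannot encode both heights.

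The paper's fix is to introduce a separate Fitting ideal $\cJ_1=\Fitt_0(\coker d_1)$ alongside $\cJ_0=\Fitt_r(\coker d_0)$ and to prove $\ord_{\cJ_i}=\het^{(i)}_{L_{\cX/Y}}$ via a Smith-normal-form argument (Proposition~\ref{prop:ordJi-computes-hts}). To relate $\cJ_0$, $\cJ_1$, $\sJ_{Y,m}$, and $mK_{\cX/Y}$ simultaneously, one constructs two maps $\alpha_1,\alpha_2$ out of the common source $(\rho^*\pi^*\Omega^d_Y\otimes\Omega^r_{\widetilde{X}})^{\otimes m}$ into $\widetilde{\jmath}_*(\Omega^{d+r}_{\widetilde{U}})^{\otimes m}$; the extra factor $\Omega^r_{\widetilde{X}}$ is exactly what allows $d_1$ (and hence $\cJ_1$) to enter the picture. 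Their images are shown to coincide by an explicit local wedge-product identity (Lemma~\ref{l:equality-of-alphas}), which is the real content and is not a formal consequence of multiplicativity of determinants. This yields the equality of subsheaves $\cJ_0^m\,(\Omega^{d+r}_{\widetilde{X}})^{\otimes m}=\sJ_{Y,m}\,\cJ_1^m\,(\rho^*\pi^*\omega_{Y,m}\otimes(\Omega^r_{\widetilde{X}/\cX})^{\otimes m})$ (Theorem~\ref{thm:Fitting-ideals-key-relation}), from which the order-function formula follows. Your stated ``main obstacle''---non-perfectness of $L_Y$---is a red herring; the genuine difficulty lies entirely on the $\cX$ side, in accounting for the non-surjectivity of $d_1$.
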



One of the key ingredients in the proof of Theorem \ref{thm:main-canonical-divisor} is constructing Fitting ideals $\cJ_i$ which read off the height functions of $L_{\cX/Y}$. It is worth mentioning that for any complex $\cE$, one has easiest access to the maximal non-vanishing height function $\het^{(i)}_\cE$. When $\cX=X$ is a variety, 
$\het^{(1)}_{L_{X/Y}}=0$ and the maximal non-vanishing height function $\het^{(0)}_{L_{X/Y}}$ can be reinterpreted as the order function associated to $\Fitt_0(\Omega^1_{X/Y})$. In the case of stacks, however, $\het^{(1)}_{L_{X/Y}}$ no longer vanishes
and the computations required to relate $mK_{\cX/Y}$, $\cJ_0$, $\cJ_1$, and $\cJ_{Y,m}$ become significantly more involved. 

Theorem \ref{thm:main-canonical-divisor} allows us to give an alternative characterization of crepant morphisms and answer \cite[Question 1.8]{SatrianoUsatine2}, where we asked:~under the hypotheses of Theorem \ref{thm:main-mcvf}, if $Y$ is Gorenstein and $\pi$ is a small resolution which is \emph{strongly} birational, then do we have
\begin{equation}\tag{$\triangle$}\label{eqn:condition-triangle}
\het^{(0)}_{L_{\cX/Y}} - \het^{(1)}_{L_{\cX/Y}} -\frac{1}{m}\ord_{\sJ_{Y,m}} \circ \sL(\pi)=0\quad\textrm{on}\quad |\sL(\cX)|\setminus|\sL(\cX\setminus\cV)|
\end{equation}
for some open substack $\cV\hookrightarrow\cX$ with $\cV\hookrightarrow\cX\to Y$ an open immersion? In fact, $(\triangle)$ was initially proposed as a definition of crepantness for strongly birational maps. We may now answer this question by proving the following stronger criterion.

\begin{corollary}\label{cor:triangle-iff-crepant}
With hypotheses as in Theorem \ref{thm:main-mcvf}, $\pi$ is crepant if and only if \eqref{eqn:condition-triangle} holds. 
In particular, \cite[Question 1.8]{SatrianoUsatine2} has an affirmative answer.
\end{corollary}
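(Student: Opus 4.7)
The plan is to deduce the corollary directly from Theorem \ref{thm:main-canonical-divisor}, which identifies $\ord_{mK_{\cX/Y}}$ with exactly $m$ times the quantity appearing in $(\triangle)$ on $|\sL(\cX)|\setminus|\sL(\cX\setminus\cU)|$. The forward implication then follows in one line, while the reverse requires translating vanishing of an order function on a cylinder back to vanishing of the Cartier divisor itself.

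For the forward direction, if $\pi$ is crepant, then $mK_{\cX/Y}=0$ as a Cartier divisor, so $\ord_{mK_{\cX/Y}}$ vanishes identically on $|\sL(\cX)|$. Dividing the identity of Theorem \ref{thm:main-canonical-divisor} by $m$ yields $(\triangle)$ with $\cV=\cU$.

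For the reverse direction, suppose $(\triangle)$ holds for some dense open $\cV \hookrightarrow \cX$ with $\cV \hookrightarrow \cX \to Y$ an open immersion, and set $\cW = \cU \cap \cV$. Since $\cX$ is irreducible and both $\cU, \cV$ are dense, $\cW$ is dense, and $\cW \hookrightarrow \cX \to Y$ is an open immersion as a restriction of the open immersion from $\cU$. On $|\sL(\cX)|\setminus|\sL(\cX\setminus\cW)|$, which is contained in both $|\sL(\cX)|\setminus|\sL(\cX\setminus\cU)|$ and $|\sL(\cX)|\setminus|\sL(\cX\setminus\cV)|$, combining Theorem \ref{thm:main-canonical-divisor} with $(\triangle)$ gives $\ord_{mK_{\cX/Y}}=0$.

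It then remains to conclude $mK_{\cX/Y}=0$ from vanishing of its order function on this set of arcs. By Remark \ref{rmk:mKXY-support}, $mK_{\cX/Y}$ is supported in $\cX\setminus\cU\subseteq\cX\setminus\cW$. Suppose for contradiction it is nonzero; then its associated Weil divisor contains a prime codimension-one closed substack $\cZ$ with nonzero multiplicity $a$. Choose a smooth point $\xi$ of $\cZ$ that avoids every other component of the support, pass to an étale chart $U \to \cX$ near $\xi$, and pick a local parameter $t$ cutting out the pullback of $\cZ$ in $U$. Since the preimage of $\cW$ in $U$ is a dense open, a standard transversality argument (e.g.\ slicing by hyperplanes through $\xi$ in general position) produces a smooth curve $C\subseteq U$ through $\xi$ meeting $\cZ$ transversally at $\xi$ and with generic point in the preimage of $\cW$. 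The associated arc $\alpha\colon\Spec\widehat{\cO}_{C,\xi}\to\cX$ then satisfies $\ord_{mK_{\cX/Y}}(\alpha) = a\cdot\ord_t(\alpha^*t) \neq 0$, contradicting the established vanishing. Hence $mK_{\cX/Y}=0$ and $\pi$ is crepant. Since small resolutions are crepant by Remark \ref{rmk:mKXY-support}, this characterization immediately gives an affirmative answer to \cite[Question 1.8]{SatrianoUsatine2}.

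The principal obstacle is the final transversality step: one must make the construction of the arc precise in the stacky setting, working étale-locally via a smooth cover and verifying that the pullback of $mK_{\cX/Y}$ and its order function commute with smooth base change. Once the arc is in hand, the order computation is a routine valuation-theoretic check.
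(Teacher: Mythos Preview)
Your proof is correct and follows essentially the same route as the paper. The forward direction is identical, and for the reverse direction both arguments combine Theorem~\ref{thm:main-canonical-divisor} with $(\triangle)$ on the intersection $\cU\cap\cV$ to obtain $\ord_{mK_{\cX/Y}}=0$ on the relevant arc locus; the only difference is that the paper isolates the step ``vanishing order function $\Rightarrow$ vanishing divisor'' as a standalone result (Proposition~\ref{prop:order-fnc-determines-div}, proved by passing to a smooth cover, choosing a smooth point of a component, and lifting a generic $1$-jet to an arc), whereas you sketch this same construction inline via an \'etale chart and a transversal curve. Your final paragraph correctly flags that this is the one place requiring care, and the paper's proof of Proposition~\ref{prop:order-fnc-determines-div} is precisely the rigorous version of your sketch.
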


Corollary \ref{cor:triangle-iff-crepant} is an application of Proposition \ref{prop:order-fnc-determines-div}, which proves that Cartier divisors on stacks are determined by their order functions.

It is worth highlighting the case where $Y$ is lci; this has the particularly appealing aspect that $\het^{(-1)}_{L_{\cX/Y}}=\ord_{\cJ_Y}$, see Proposition \ref{prop:Ylci->perfect-ht-2-vanish}. As a result, the right hand side of Theorem \ref{thm:main-canonical-divisor} can be reinterpreted as an alternating sum of height functions of $L_{\cX/Y}$. Furthermore, in this case $L_{\cX/Y}$ is perfect and so has a well-defined determinant with a canonical trivialization $\eta^*\det(L_{\cX/Y})\simeq\cO_{k(\cX)}$. This yields a Cartier divisor $\Div(L_{\cX/Y})$ as we show in Proposition \ref{prop:alternating-sum-hts-ord}. This Cartier divisor in fact agrees with $K_{\cX/Y}$.

\begin{theorem}\label{thm:main-lci-case}
If $Y$ is lci, then $K_{\cX/Y}=\Div(L_{\cX/Y})$ and hence $\omega_{\cX/Y}\simeq\det(L_{\cX/Y})$. Furthermore, 
\[
	\ord_{K_{\cX/Y}} = \sum_{-1\leq i\leq 1}(-1)^i\het^{(i)}_{L_{\cX/Y}}.
\]
\end{theorem}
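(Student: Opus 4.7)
The plan is to first establish the sheaf-level identity $\omega_{\cX/Y} \simeq \det(L_{\cX/Y})$ via the cotangent triangle, then upgrade it to the divisor identity $K_{\cX/Y} = \Div(L_{\cX/Y})$ by matching generic trivializations, and finally derive the order function formula by combining Theorem \ref{thm:main-canonical-divisor} with the lci identification from Proposition \ref{prop:Ylci->perfect-ht-2-vanish}. Since $Y$ is lci, it is Gorenstein, so I may take $m=1$, and $L_Y$ is perfect, concentrated in degrees $[-1,0]$, with a canonical isomorphism $\det(L_Y) \simeq \omega_Y$. Smoothness of $\cX$ gives that $L_\cX$ is perfect, so the triangle $\pi^*L_Y \to L_\cX \to L_{\cX/Y}$ exhibits $L_{\cX/Y}$ as perfect, and multiplicativity of determinants on distinguished triangles of perfect complexes yields $\det(L_\cX) \simeq \pi^*\det(L_Y) \otimes \det(L_{\cX/Y})$, which rearranges to $\det(L_{\cX/Y}) \simeq \omega_\cX \otimes \pi^*\omega_Y^\vee = \omega_{\cX/Y}$.

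To promote this to $K_{\cX/Y} = \Div(L_{\cX/Y})$, I would verify that under the above isomorphism, the canonical trivialization of $\eta^*\det(L_{\cX/Y})$ from Proposition \ref{prop:alternating-sum-hts-ord} matches the trivialization of $\eta^*\omega_{\cX/Y}$ used to define $K_{\cX/Y}$ in Definition \ref{def:rel-canonical-divisor}. Both trivializations arise from the fact that $\pi$ restricts to an open immersion on $\cU$: restricting the cotangent triangle to $\cU$ and using $L_{\cU/Y} = 0$ yields $\pi^*L_Y|_\cU \xrightarrow{\sim} L_\cX|_\cU$, whose determinant is the common source of both trivializations after pulling back to $\eta$. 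I expect this compatibility check to be the main obstacle: one must verify that the general construction of $\Div$ of a perfect complex from a generic trivialization (the content of Proposition \ref{prop:alternating-sum-hts-ord}) is compatible with taking determinants on distinguished triangles. This reduces to a formal but delicate diagram chase on $\cU$, in which orientation and sign conventions for determinants of perfect complexes must be tracked carefully.

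Once $K_{\cX/Y} = \Div(L_{\cX/Y})$ is in hand, the order formula is routine. Specializing Theorem \ref{thm:main-canonical-divisor} to $m=1$ gives
\[
\ord_{K_{\cX/Y}} = \het^{(0)}_{L_{\cX/Y}} - \het^{(1)}_{L_{\cX/Y}} - \ord_{\sJ_Y} \circ \sL(\pi),
\]
and Proposition \ref{prop:Ylci->perfect-ht-2-vanish} identifies the last term with $\het^{(-1)}_{L_{\cX/Y}}$. Substitution yields the advertised alternating sum
\[
\ord_{K_{\cX/Y}} = \sum_{-1 \leq i \leq 1}(-1)^i \het^{(i)}_{L_{\cX/Y}}.
\]
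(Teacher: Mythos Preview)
Your approach is correct in outline but genuinely different from the paper's. You prove the divisor identity $K_{\cX/Y}=\Div(L_{\cX/Y})$ \emph{first}, via the determinant of the cotangent triangle together with a direct comparison of the two generic trivializations, and then deduce the order formula from Theorem~\ref{thm:main-canonical-divisor} and Proposition~\ref{prop:Ylci->perfect-ht-2-vanish}. The paper reverses this logic: it first computes $\ord_{\Div(L_{\cX/Y})}$ as the alternating sum of heights using Proposition~\ref{prop:alternating-sum-hts-ord}, identifies this with $\het^{(0)}_{L_{\cX/Y}}-\het^{(1)}_{L_{\cX/Y}}-\ord_{\sJ_Y}$ via Proposition~\ref{prop:Ylci->perfect-ht-2-vanish}, recognizes the result as $\ord_{K_{\cX/Y}}$ by Theorem~\ref{thm:main-canonical-divisor}, and then invokes Proposition~\ref{prop:order-fnc-determines-div} (order functions determine Cartier divisors) to conclude the two divisors agree.

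The trade-off is exactly the one you flagged. Your route gives the sheaf isomorphism $\omega_{\cX/Y}\simeq\det(L_{\cX/Y})$ conceptually, but the step you call the ``main obstacle''---checking that the trivialization of $\eta^*\det(L_{\cX/Y})$ arising from $L_{\cX/Y}|_\cU\simeq 0$ agrees under the triangle isomorphism with the defining trivialization of $\eta^*\omega_{\cX/Y}$---is precisely the kind of sign-and-orientation bookkeeping for determinants of perfect complexes that the paper sidesteps. By matching order functions instead and appealing to Proposition~\ref{prop:order-fnc-determines-div}, the paper avoids that diagram chase entirely, at the cost of using an extra ingredient (Proposition~\ref{prop:order-fnc-determines-div}) and of obtaining the sheaf statement only as a consequence of the divisor statement rather than directly.
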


Lastly, we believe our existence theorem for crepant resolutions (Theorem \ref{thm:crepant-log-terminal}) is optimal, as we now explain. For a $\QQ$-Gorenstein variety $Y$, the integral defining $\mu_Y^{\Gor}(\sL(Y))$ converges if and only if $Y$ has log-terminal singularities. In light of our motivic change of variables formula (Theorem \ref{thm:main-mcvf}), the following conjecture is therefore reasonable.

\begin{conjecture}\label{conj:crepant-->log-terminal}
If $Y$ is an irreducible $\QQ$-Gorenstein variety, $\pi\colon\cX\to Y$ is a crepant resolution, and $\cX$ has affine diagonal, then $Y$ has log-terminal singularities.
\end{conjecture}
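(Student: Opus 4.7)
The plan is to show that the existence of a crepant resolution with affine diagonal forces $\mu_Y^{\Gor}(\sL(Y))$ to converge in $\widehat{\sM}_k[\bL^{1/m}]$, then invoke the standard motivic characterization of log-terminality. In one direction, this characterization states that for $\QQ$-Gorenstein $Y$, convergence of $\mu_Y^{\Gor}(\sL(Y))$ implies all discrepancies on a log-resolution $Z\to Y$ exceed $-1$; this follows by picking any log-resolution, applying the classical scheme-theoretic motivic change of variables, and reading off the $\bL$-exponents stratum by stratum. So the task reduces to proving the convergence.

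For the convergence, I would exhaust $\sL(Y)$ — up to the measure-zero arcs lying entirely in $Y\setminus\cV$, where $\cV\subset Y$ is the dense open on which $\pi$ is an isomorphism by strong birationality — by an ascending sequence of cylinders $D_n$, for instance defined by bounding $\ord_{\sJ_{Y,m}}$. Each $D_n$ should be matched to a cylinder $\cC_n\subset|\sL(\cX)|\setminus|\sL(\cX\setminus\cU)|$ satisfying the bijection hypothesis of Theorem \ref{thm:main-mcvf}(\ref{mcvf-main-part}), using the factorization $\pi=\pi_2\circ\pi_1$ through a good moduli space together with the valuative criterion to produce unique lifts for $k'$-arcs landing in $\cV$. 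Since $\pi$ is crepant, Corollary \ref{cor:main-crepant} yields $\mu_Y^{\Gor}(D_n)=\mu_\cX(\cC_n)$, and in the limit
\[
\mu_Y^{\Gor}(\sL(Y)) = \lim_{n\to\infty}\mu_\cX(\cC_n),
\]
provided the right-hand side converges.

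To establish convergence of the right-hand side, one arranges the $\cC_n$ as a telescoping sequence inside $|\sL(\cX)|\setminus|\sL(\cX\setminus\cU)|$, so that the partial sums are bounded by the motivic volume of $\sL(\cX)$. The hope is to identify this volume as a well-defined element of $\widehat{\sM}_k$, corresponding to a motivic class of $\cX$ divided by $\bL^{\dim\cX}$, via the framework of \cite{SatrianoUsatine, SatrianoUsatine2}. The principal obstacle is precisely this step: establishing the finiteness, in the topology of $\widehat{\sM}_k$, of the motivic volume of the arc space of a general smooth Artin stack with affine diagonal, which is the essential new input beyond the Deligne--Mumford case. A secondary technical issue is verifying that the boundary contribution from $\sL(\cX\setminus\cU)$ vanishes in the limit, which should follow from the positive codimension of $\cX\setminus\cU$ in $\cX$ via standard dimension-filtration estimates in $\widehat{\sM}_k[\bL^{1/m}]$.
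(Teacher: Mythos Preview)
The statement is labeled as a \emph{Conjecture} in the paper and is not proved there; immediately after stating it, the authors write that verification ``would need a careful understanding of which arcs of $Y$ admit lifts to arcs of $\cX$'' and that this ``will be the subject of future work, where we aim to expand the scope of motivic integration, allowing for canonical lifts of arcs.'' There is therefore no proof in the paper to compare your proposal against.

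Your outline is a plausible sketch of the expected strategy, and you have honestly flagged the two genuine gaps. The first, which you pass over with ``using the factorization $\pi=\pi_2\circ\pi_1$ through a good moduli space together with the valuative criterion to produce unique lifts,'' is exactly the obstruction the authors single out: for good moduli space maps of Artin stacks, existence and uniqueness of arc lifts satisfying the bijection hypothesis of Theorem~\ref{thm:main-mcvf}(\ref{mcvf-main-part}) is not available off the shelf---positive-dimensional stabilizers typically allow many non-isomorphic lifts of a single arc---and the authors explicitly defer the construction of canonical lifts to future work. The second gap, your ``principal obstacle'' of establishing that $\mu_\cX$ assigns a well-defined element of $\widehat{\sM}_k$ to the full arc space of a smooth Artin stack with affine diagonal, is likewise not settled in the paper or in \cite{SatrianoUsatine,SatrianoUsatine2}. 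So your proposal is not a proof but a correct diagnosis of why the statement remains open.
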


To verify Conjecture \ref{conj:crepant-->log-terminal}, one would need a careful understanding of which arcs of $Y$ admit lifts to arcs of $\cX$. This will be the subject of future work, where we aim to expand the scope of motivic integration, allowing for canonical lifts of arcs.


\subsection{Conventions}\label{conventions}
For any stack $\cX$ over $k$, we let $|\cX|$ denote its associated topological space, and for any subset $\cC \subset |\cX|$ and field extension $k'$ of $k$, we let $\cC(k')$ (resp. $\overline{\cC}(k')$) denote the category of (resp. set of isomorphism classes of) $k'$-valued points of $\cX$ whose class in $|\cX|$ is contained in $\cC$.

\begin{acknowledgements}
This project benefited greatly from conversations with many people. We are grateful to Jason Bell, Bhargav Bhatt, Dan Edidin, Patricia Klein, Lucia Martin Merchan, Mircea Musta\cb{t}\u{a}, Martin Olsson, Karl Schwede, Karen Smith, Michel Van den Bergh, and Takehito Yasuda.
\end{acknowledgements}

\section{Reformulating our motivic change of variables formula}\label{sec:proof-of-conjecture}

Our motivic change of variables formula \cite[Theorem 1.3]{SatrianoUsatine2} involves height functions of different complexes:~$\het^{(0)}_{L\sigma^*L_{\cI\cX/\cX}} - \het^{(0)}_{L_{\cX/Y}}$, where $I\cX$ is the inertia stack and $\sigma\colon\cX\to I\cX$ is the identity section. We begin by rewriting this purely in terms of $L_{\cX/Y}$.

\begin{lemma}\label{l:het0IX-LXY}
If $\cX$ is a finite type Artin stack over a base scheme $S$, 
then there is a natural quasi-isomorphism $L\sigma^*L_{\cI\cX/\cX}\simeq L_{\cX/S}[1]$.
\end{lemma}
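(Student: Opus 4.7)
The plan is to prove the lemma by combining the standard identification of the cotangent complex of the diagonal with base change for the cotangent complex along the Cartesian square defining $\cI\cX$.

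First, I would recall that the composition
\[
\cX \xrightarrow{\Delta} \cX\times_S\cX \xrightarrow{\pi_1} \cX
\]
is the identity. Since $\pi_1$ arises as a base change of the structure morphism $\cX\to S$, we have $L_{\cX\times_S\cX/\cX} \simeq L\pi_2^* L_{\cX/S}$, and pulling back along $\Delta$ yields $L\Delta^* L_{\cX\times_S\cX/\cX} \simeq L_{\cX/S}$. The transitivity triangle applied to the identity composition then forces the standard identification
\[
L_{\cX/\cX\times_S\cX} \simeq L_{\cX/S}[1].
\]

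Next, $\cI\cX$ sits in the 2-Cartesian square
\[
\begin{tikzcd}
\cI\cX \ar[r, "p"] \ar[d, "p"'] & \cX \ar[d, "\Delta"] \\
\cX \ar[r, "\Delta"'] & \cX\times_S\cX,
\end{tikzcd}
\]
in which the two projections from $\cI\cX$ coincide. Base change for the cotangent complex then yields a natural quasi-isomorphism $L_{\cI\cX/\cX} \simeq Lp^* L_{\cX/\cX\times_S\cX}$. Finally, since $p\circ\sigma = \id_\cX$, pulling back along the identity section and invoking the first step gives
\[
L\sigma^* L_{\cI\cX/\cX} \simeq L(p\sigma)^* L_{\cX/\cX\times_S\cX} \simeq L_{\cX/S}[1],
\]
as claimed.

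The main technical point is justifying base change in the second step, since $\Delta$ is not flat in general. One approach is to reduce to a smooth atlas $U\to\cX$, where the fiber product $\cI\cX\times_\cX U$ admits an explicit description as a fiber product of schemes with one leg factoring through a smooth morphism, so that ordinary flat base change at the scheme level applies to Olsson's cotangent complex. Alternatively, one may interpret the construction in a derived framework, in which case base change for the cotangent complex holds unconditionally; this is natural here since $L\sigma^* L_{\cI\cX/\cX}$ is visibly a derived deformation-theoretic invariant of the identity section, consistent with the height-function computations in the motivic change of variables formula elsewhere in the paper.
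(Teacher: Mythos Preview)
Your proposal is correct and follows essentially the same approach as the paper: both arguments combine base change for the cotangent complex along the Cartesian square $\cI\cX=\cX\times_{\cX\times_S\cX}\cX$ (yielding $L\sigma^*L_{\cI\cX/\cX}\simeq L_\Delta$) with the transitivity triangle for the identity factorization $\cX\xrightarrow{\Delta}\cX\times_S\cX\to\cX$ (yielding $L_\Delta\simeq L_{\cX/S}[1]$), and you have simply reversed the order of these two steps. Your closing paragraph on justifying the base-change step flags a subtlety the paper invokes without comment; the derived-framework remark is the cleanest route, while the atlas reduction you sketch would need more care since $\cI U$ does not compute $\cI\cX\times_\cX U$ for a smooth cover $U\to\cX$.
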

\begin{proof}
From the diagram
\[
\xymatrix{
\cX\ar[r]^-{\sigma}\ar[dr]_-{\id} \ar@/^1.5pc/[rr]^{\id} & I\cX\ar[r]^-{p}\ar[d] & \cX\ar[d]^-{\Delta} \\
& \cX\ar[r]^-{\Delta} & \cX\times_S \cX
}
\]
with the square being cartesian, we have
\[
L\sigma^*L_{I\cX/\cX}=L\sigma^*Lp^*L_{\Delta} = L_{\Delta}.
\]
From the diagram
\[
\xymatrix{
\cX\ar[r]^-{\Delta}\ar[dr]_-{\id}\ar@/^1.5pc/[rr]^{\id} & \cX\times_S\cX\ar[r]^-{q}\ar[d] & \cX\ar[d] \\
& \cX\ar[r] & S
}
\]
with the square being cartesian, we have
\[
L_\Delta=L\Delta^*L_{\cX\times_S\cX/\cX}[1]=L\Delta^*Lq^*L_{\cX/S}[1]=L_{\cX/S}[1].
\]
Thus, $L\sigma^*L_{I\cX/\cX}=L_{\cX/S}[1]$.
\end{proof}


\begin{corollary}\label{cor:ht1-ht0-LXY}
Using Notation \ref{running-notation},
\[
\het^{(0)}_{L\sigma^*L_{\cI\cX/\cX}} 
=\het^{(1)}_{L_{\cX/Y}}.
\]
\end{corollary}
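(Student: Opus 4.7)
The plan is to apply Lemma~\ref{l:het0IX-LXY} directly, taking the base scheme to be $S = Y$. With this choice, the stack $\cI\cX = \cX \times_{\cX \times_Y \cX} \cX$ is the inertia stack of $\cX$ relative to $Y$ that appears in the Corollary, and substituting $S = Y$ throughout the proof of Lemma~\ref{l:het0IX-LXY} yields a natural quasi-isomorphism
\[
L\sigma^*L_{\cI\cX/\cX} \;\simeq\; L_{\cX/Y}[1].
\]

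The Corollary then reduces to the compatibility of the height functions $\het^{(i)}_{(-)}$ with cohomological shifts of perfect complexes. Recalling from \cite[Definition 3.1]{SatrianoUsatine2} that $\het^{(i)}_\cE$ is controlled by the degree-$i$ cohomology data of a perfect presentation of $\cE$ (via Fitting-style invariants at that cohomological degree), the identity $H^0(\cE[1]) = H^1(\cE)$ gives, directly from the definition, the shift relation $\het^{(0)}_{\cE[1]} = \het^{(1)}_\cE$. Applied to $\cE = L_{\cX/Y}$, this yields
\[
\het^{(0)}_{L\sigma^*L_{\cI\cX/\cX}} \;=\; \het^{(0)}_{L_{\cX/Y}[1]} \;=\; \het^{(1)}_{L_{\cX/Y}},
\]
which is the claimed equality.

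The only substantive step to verify is the shift compatibility $\het^{(0)}_{\cE[1]} = \het^{(1)}_\cE$, which is essentially tautological from the definition in \cite{SatrianoUsatine2} but deserves to be written down carefully. Beyond this piece of bookkeeping there should be no geometric obstacle, as the Corollary is really a translation of Lemma~\ref{l:het0IX-LXY} into the language of height functions.
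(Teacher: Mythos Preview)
Your approach has a genuine gap. The inertia stack $I\cX$ appearing in the Corollary (inherited from \cite[Theorem~1.3]{SatrianoUsatine2}) is the \emph{absolute} inertia $I_{\cX/k}$, not the relative one $I_{\cX/Y}$. It is true that these coincide as $\cX$-stacks because $Y$ is a scheme: the map $\cX\times_Y\cX \to \cX\times_k\cX$ is a monomorphism (being a base change of the diagonal of $Y$), so the two self-fiber-products of $\cX$ along the diagonal agree. You do not say this, but it is a minor omission. The real problem is that Lemma~\ref{l:het0IX-LXY} cannot be applied with $S=Y$. Indeed, suppose it could: combining the two instances $S=k$ and $S=Y$ of the Lemma with the identification $I_{\cX/k}=I_{\cX/Y}$ would yield
\[
L_\cX[1]\;\simeq\;L\sigma^*L_{I\cX/\cX}\;\simeq\;L_{\cX/Y}[1],
\]
forcing $L_\cX\simeq L_{\cX/Y}$ and hence $L\pi^*L_Y\simeq 0$, which is false whenever $\dim Y>0$. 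The failure lies in the proof of the Lemma: the base-change step $L_{\cX\times_S\cX/\cX}\simeq Lq^*L_{\cX/S}$ in the second cartesian square requires the classical fiber product $\cX\times_S\cX$ to coincide with the derived one, i.e., that $\cX\to S$ be flat. This is automatic for $S=\Spec k$ but is not assumed for $\pi\colon\cX\to Y$, which is only weakly birational.

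The paper's proof instead applies the Lemma with $S=k$ to get $\het^{(0)}_{L\sigma^*L_{I\cX/\cX}}=\het^{(1)}_{L_\cX}$, and then proves $\het^{(1)}_{L_\cX}=\het^{(1)}_{L_{\cX/Y}}$ by a separate argument: pulling back the exact triangle $L\pi^*L_Y\to L_\cX\to L_{\cX/Y}$ along an arc $\varphi$ and using that $L_Y$ is concentrated in non-positive degrees (since $Y$ is a scheme) to conclude $L^1\varphi^*L_\cX\cong L^1\varphi^*L_{\cX/Y}$. This second step is precisely the content you are trying to bypass, and it cannot be absorbed into a different choice of base $S$ in the Lemma.
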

\begin{proof}
By Lemma \ref{l:het0IX-LXY}, we have $\het^{(0)}_{L\sigma^*L_{\cI\cX/\cX}}=\het^{(1)}_{L_\cX}$. Consider the exact triangle
\[
L\pi^*L_Y\to L_\cX\to L_{\cX/Y}.
\]
Let $D=\Spec k'[[t]]$ with $k'/k$ a field extension, let $\varphi\colon D\to\cX$ be an arc, and let $\psi=\pi\circ\varphi$. Applying $L\varphi^*$ to the exact triangle above yields an exact triangle
\[
L\psi^*L_Y\to L\varphi^*L_\cX\to L\varphi^*L_{\cX/Y}.
\]
Since $L\psi^*L_Y$ is concentrated in non-positive degrees, we see
\[
L^1\varphi^*L_\cX\simeq L^1\varphi^*L_{\cX/Y},
\]
and hence $\het^{(1)}_{L_{\cX}}=\het^{(1)}_{L_{\cX/Y}}$.
\end{proof}

In light of Corollary \ref{cor:ht1-ht0-LXY}, we may rewrite the motivic change of variables formula \cite[Theorem 1.3]{SatrianoUsatine2}; we may also remove the unnecessary hypothesis $\dim\cX=\dim Y$, which is automatically satisfied given the existence of $\cU$.

\begin{theorem}\label{theoremChangeOfVariables-reformulation}
With hypotheses as in Theorem \ref{thm:main-mcvf},
\begin{enumerate}[(a)]
\item For any cylinder $\cC \subset |\sL(\cX)| \setminus |\sL(\cX \setminus \cU)| \subset |\sL(\cX)|$, the function
\[
	\het^{(0)}_{L_{\cX/Y}} - \het^{(1)}_{L_{\cX/Y}} : \cC \to \Z
\]
is constructible.\vspace{0.4em}

\item\label{theoremPartChangeOfVariables} If $\cC \subset |\sL(\cX)| \setminus |\sL(\cX \setminus \cU)| \subset |\sL(\cX)|$ and $D \subset \sL(Y)$ are cylinders such that $\overline{\cC}(k') \to D(k')$ is a bijection for all field extensions $k'$ of $k$, then
\[
	\mu_Y(D) = \int_{\cC}\bL^{\het^{(0)}_{L_{\cX/Y}} - \het^{(1)}_{L_{\cX/Y}}} \diff \mu_\cX.
\]

\end{enumerate}
\end{theorem}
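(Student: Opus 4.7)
The theorem is a direct reformulation of \cite[Theorem 1.3]{SatrianoUsatine2}, so the plan is simply to verify that the two forms of the change of variables formula agree and that the hypotheses of the original theorem are met in our setting.

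First, I would recall the precise statement of \cite[Theorem 1.3]{SatrianoUsatine2}, where the correction factor appearing in the exponent of $\bL$ is $\het^{(0)}_{L_{\cX/Y}} - \het^{(0)}_{L\sigma^*L_{\cI\cX/\cX}}$ and where the constructibility claim is phrased in terms of this same difference. The relevant hypotheses there require $\cX$ to be a smooth finite type Artin stack with affine geometric stabilizers and separated diagonal, and $\pi\colon\cX\to Y$ to be a morphism to a finite type scheme with $\dim\cX=\dim Y$.

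Second, I would apply Corollary \ref{cor:ht1-ht0-LXY}, which gives the identification $\het^{(0)}_{L\sigma^*L_{\cI\cX/\cX}} = \het^{(1)}_{L_{\cX/Y}}$. Substituting this into the constructibility statement and into the exponent of $\bL$ in the integral yields exactly the formulas in parts (a) and (b) of Theorem \ref{theoremChangeOfVariables-reformulation}.

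Finally, I would justify dropping the dimension hypothesis $\dim\cX=\dim Y$. In Notation \ref{running-notation} we have fixed a non-empty open substack $\cU\hookrightarrow\cX$ such that $\cU\hookrightarrow\cX\to Y$ is an open immersion; by the definition of weak birationality, the image of this composition is moreover dense in $Y$. Since $\cX$ is irreducible, the non-empty open $\cU$ is dense in $\cX$, so $\dim\cU=\dim\cX$. Combined with the dense open immersion $\cU\hookrightarrow Y$, this gives $\dim\cU=\dim Y$, and hence $\dim\cX=\dim Y$ automatically holds. No real obstacle arises: the substance of the theorem lies entirely in \cite[Theorem 1.3]{SatrianoUsatine2} and in Corollary \ref{cor:ht1-ht0-LXY}, whose proof via the cotangent complex computation of Lemma \ref{l:het0IX-LXY} has already been given.
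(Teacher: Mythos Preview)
Your proposal is correct and matches the paper's approach exactly: the paper states that the theorem follows from \cite[Theorem 1.3]{SatrianoUsatine2} by applying Corollary \ref{cor:ht1-ht0-LXY} to rewrite the correction factor, and by observing that the hypothesis $\dim\cX=\dim Y$ is automatic from the existence of $\cU$. You have simply spelled out the dimension argument in slightly more detail than the paper does.
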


\section{Computing heights using Fitting ideals}\label{sec:hts-fitting}

Throughout this section, we use Notation \ref{running-notation}. We additionally fix a smooth cover $\rho\colon\widetilde{X}\to\cX$, let
\[
d=\dim Y=\dim\cX\quad\textrm{and}\quad r=\dim\widetilde{X}-\dim\cX.
\]
Consider the diagram
\begin{equation}\label{eqn:main-diag}
\xymatrix{
\widetilde{U}\ar@{^(->}[r]^-{\widetilde{\jmath}}\ar[d]_-p & \widetilde{X}\ar[d]^-\rho\\
\cU\ar@{^(->}[r]^-{j}\ar@^{^(->}[rd]_-i & \cX\ar[d]^-\pi\\
& Y
}
\end{equation}
where the square is cartesian. Then we have an exact triangle
\begin{equation}\label{eq:rhoL_X}
\rho^*L_{\cX}\to \Omega^1_{\widetilde{X}}\to \Omega^1_{\widetilde{X}/\cX}.
\end{equation}
For schemes, the relative Jacobian ideal plays an important role in motivic integration; this is given by the $0$th Fitting ideal of the relative differentials and it reads off the $0$th height function. In the case of stacks, we must gain access to both the $0$th and $1$st height functions of $L_{\cX/Y}$. We accomplish this by considering Fitting ideals coming from the maps in the following $3$-term complex.

\begin{definition}\label{def:Fitt-E}
Let
\[
\cE_{\widetilde{X}/\cX/Y}:=[\rho^*\pi^*\Omega^1_Y\xrightarrow{d_0} \Omega^1_{\widetilde{X}}\xrightarrow{d_1} \Omega^1_{\widetilde{X}/\cX}]
\]
with $\Omega^1_{\widetilde{X}/\cX}$ is degree $1$. Define the ideal sheaves
\[
\cJ_1:=\Fitt_0(\coker(d_1))\quad\textrm{and}\quad \cJ_0:=\Fitt_r(\coker(d_0)).
\]
\end{definition}

\begin{remark}\label{rmk:Fitt-E-not-surj}
Unlike the case of schemes, $d_1$ need not be surjective and so $\cJ_1$ can be a non-trivial ideal. Since $\coker(d_0)=\Omega^1_{\widetilde{X}/Y}$, we see
\[
\cJ_0=\Fitt_r(\Omega^1_{\widetilde{X}/Y}).
\]
In the case where $\cX=\widetilde{X}$ is a smooth scheme, this agrees with the relative Jacobian ideal.
\end{remark}

\begin{remark}\label{rmk:Fitt-E}
By definition, $\cJ_0$ and $\cJ_1$ are the images of the maps
\[
\rho^*\pi^*\Omega^d_Y\otimes(\Omega^d_{\widetilde{X}})^\vee\twoheadrightarrow \cJ_0\subset\cO_{\widetilde{X}}
\]
\[
\Omega^r_{\widetilde{X}}\otimes(\Omega^r_{\widetilde{X}/\cX})^\vee\twoheadrightarrow \cJ_1\subset\cO_{\widetilde{X}}
\]
induced by $d_0$ and $d_1$, see e.g.,~\cite[p 492--493]{EisenbudCA}. 
\end{remark}

The following result shows the utility of $\cJ_0$ and $\cJ_1$. When $\cX=X$ is a scheme, this recovers the fact that $\ordjac(\varphi)$ is computed by $\Fitt_0(\Omega^1_{X/Y})$.

\begin{proposition}\label{prop:ordJi-computes-hts}
Let $D=\Spec k'[[t]]$ with $k'/k$ a field extension, let $\widetilde{\varphi}\colon D\to\widetilde{X}$ be an arc, and let $\varphi=\rho\circ\widetilde{\varphi}$. Then
\[
\ord_{\cJ_i}(\widetilde{\varphi})=\het^{(i)}_{L_{\cX/Y}}(\varphi)
\]
for $i\in\{0,1\}$.
\end{proposition}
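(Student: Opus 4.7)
The plan is to derive an explicit four-term exact sequence on $D$ computing $H^0(L\varphi^* L_{\cX/Y})$ and $H^1(L\varphi^* L_{\cX/Y})$ directly, and then read off $\widetilde{\varphi}^*\cJ_0$ and $\widetilde{\varphi}^*\cJ_1$ by combining base change of Fitting ideals with standard computations over the DVR $k'[[t]]$.

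Setting $\psi = \pi \circ \rho \circ \widetilde{\varphi}$, I pull back along $\widetilde{\varphi}$ the two transitivity triangles
\begin{equation*}
L\rho^* L_{\cX/Y} \to L_{\widetilde{X}/Y} \to L_{\widetilde{X}/\cX} = \Omega^1_{\widetilde{X}/\cX}, \qquad L(\pi\rho)^* L_Y \to \Omega^1_{\widetilde{X}} \to L_{\widetilde{X}/Y}.
\end{equation*}
Since $L_Y$ is concentrated in cohomological degrees $\leq 0$ and derived pullback is right $t$-exact, $L\psi^* L_Y$ lies in degrees $\leq 0$. Taking the long exact sequence of the second pulled-back triangle then gives $H^1(L\widetilde{\varphi}^* L_{\widetilde{X}/Y}) = 0$ and $H^0(L\widetilde{\varphi}^* L_{\widetilde{X}/Y}) = \coker(\widetilde{\varphi}^* d_0) = \widetilde{\varphi}^*\Omega^1_{\widetilde{X}/Y}$. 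Feeding this into the long exact sequence of the first pulled-back triangle produces the key sequence
\begin{equation*}
0 \to H^0(L\varphi^* L_{\cX/Y}) \to \widetilde{\varphi}^*\Omega^1_{\widetilde{X}/Y} \xrightarrow{\widetilde{\varphi}^*\bar{d}_1} \widetilde{\varphi}^*\Omega^1_{\widetilde{X}/\cX} \to H^1(L\varphi^* L_{\cX/Y}) \to 0,
\end{equation*}
where $\bar{d}_1$ is the map on $\Omega^1_{\widetilde{X}/Y} = \coker(d_0)$ induced by $d_1$.

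For $i = 1$: right-exactness of pullback applied to the factorization $d_1 = \bar{d}_1 \circ (\Omega^1_{\widetilde{X}} \twoheadrightarrow \Omega^1_{\widetilde{X}/Y})$ yields $H^1(L\varphi^* L_{\cX/Y}) = \widetilde{\varphi}^* \coker(d_1)$, so base change of Fitting ideals gives $\widetilde{\varphi}^*\cJ_1 = \Fitt_0(H^1(L\varphi^* L_{\cX/Y}))$, matching $\het^{(1)}_{L_{\cX/Y}}(\varphi)$ via Definition 3.1 of \cite{SatrianoUsatine2}. For $i = 0$: the assumption that $\widetilde{\varphi}$ does not factor through $\widetilde{X}\smallsetminus\widetilde{U}$ makes $\bar{d}_1$ generically an isomorphism along $\widetilde{\varphi}$, so $\widetilde{\varphi}^*\Omega^1_{\widetilde{X}/Y}$ has generic rank exactly $r$ and splits as $k'[[t]]^r \oplus T$ with $T$ its torsion submodule. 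Because $\widetilde{\varphi}^*\Omega^1_{\widetilde{X}/\cX}$ is torsion-free of rank $r$ and $\widetilde{\varphi}^*\bar{d}_1$ restricts to a generic isomorphism between the free summand and the target, the free part injects into the target and so $\ker(\widetilde{\varphi}^*\bar{d}_1) = T$. A direct Fitting-ideal computation over the DVR gives $\widetilde{\varphi}^*\cJ_0 = \Fitt_r(k'[[t]]^r \oplus T) = \Fitt_0(T) = \Fitt_0(H^0(L\varphi^* L_{\cX/Y}))$, which matches $\het^{(0)}_{L_{\cX/Y}}(\varphi)$.

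The principal technical obstacle is controlling what happens when $Y$ is singular: $L_Y$ then has nontrivial cohomology in arbitrarily negative degrees, which propagates through $L\psi^* L_Y$ into the pulled-back transitivity triangles. The essential observation is that this contribution only enters the long exact sequences in degrees $\leq -1$, so $H^0$ and $H^1$ of the four-term sequence are unaffected. A secondary subtlety is that $\Omega^1_{\widetilde{X}/Y}$ is not locally free in general, which would obstruct attempts to represent $L\rho^* L_{\cX/Y}$ by a bounded complex of vector bundles; the advantage of deriving the four-term sequence directly from distinguished triangles, rather than from such a resolution, is precisely that it sidesteps this issue.
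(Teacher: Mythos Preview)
Your proof is correct and follows essentially the same route as the paper's: identify $H^i(L\varphi^*L_{\cX/Y})$ for $i\in\{0,1\}$ with the cohomology of the pulled-back three-term complex (the paper packages this as Proposition~\ref{prop:E-LXY-same-ht} via a large commutative diagram, while you obtain the equivalent four-term sequence directly from the two transitivity triangles), then compute lengths as orders of Fitting ideals over the DVR (the paper's general Lemma~\ref{l:Fitting-torsion-coh-length} versus your free-plus-torsion splitting). Your explicit assumption that $\widetilde{\varphi}$ does not factor through $\widetilde{X}\setminus\widetilde{U}$ is not in the statement, but the paper's proof uses it implicitly as well---it is needed both for $\psi^*\Omega^1_Y$ to have rank $d$ and for the torsion hypothesis of Lemma~\ref{l:Fitting-torsion-coh-length}.
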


The rest of this section is devoted to the proof of Proposition \ref{prop:ordJi-computes-hts}. We begin with a general result.

\begin{lemma}\label{l:Fitting-torsion-coh-length}
Let $R=k'[[t]]$ with $k'$ a field and consider a complex
\[
M\xrightarrow{\alpha} R^{a+b}\xrightarrow{\beta} R^b
\]
of finitely generated $R$-modules, where $M$ has rank $a$. Suppose that the cohomology modules $Q=\ker(\beta)/\im(\alpha)$ and $\coker(\beta)$ are torsion. Then
\[
\dim_{k'}Q=\ord_t\Fitt_{b-a}(\coker(\alpha))
\]
and
\[
\dim_{k'}\coker(\beta)=\ord_t\Fitt_0(\coker(\beta)).
\]
\end{lemma}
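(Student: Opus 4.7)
The plan is to reduce both claims to explicit diagonal-matrix computations over the DVR $R=k'[[t]]$, using Smith normal form and the structure theorem for finitely generated modules.

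First, I would pin down the ranks. Since $\coker(\beta)$ is torsion, $\im(\beta)$ has rank $b$ and so $K:=\ker(\beta)$ has rank $(a+b)-b=a$. As the kernel of a map into the torsion-free module $R^b$, $K$ is saturated in $R^{a+b}$, and a saturated finite-rank submodule of a free module over a PID is itself free, so $K\cong R^a$. Moreover, the quotient $R^{a+b}/K=\im(\beta)\subset R^b$ is torsion-free and finitely generated, hence free, so $K$ is a direct summand of $R^{a+b}$. Finally, torsion-ness of $Q=K/\im(\alpha)$ forces $\im(\alpha)\subset K$ to have full rank $a$.

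Next, I would choose compatible bases via Smith normal form. Pick a surjection $R^p\twoheadrightarrow M$ and let $A\colon R^p\to R^{a+b}$ denote the resulting composition with $\alpha$, so $\im(A)=\im(\alpha)\subset K$. Applying the elementary divisors theorem to the inclusion $\im(\alpha)\hookrightarrow K\cong R^a$ yields a basis $f_1,\ldots,f_a$ of $K$ and exponents $n_1,\ldots,n_a\in\NN$ with $\im(\alpha)=\bigoplus_i R\cdot t^{n_i}f_i$. Extending $f_1,\ldots,f_a$ by a basis of a free complement of $K$ in $R^{a+b}$ and adjusting the basis of $R^p$ accordingly, the matrix of $A$ takes the form $\diag(t^{n_1},\ldots,t^{n_a},0,\ldots,0)$, with the $a$ nonzero rows indexing exactly the coordinates spanning $K$. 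In particular $Q\cong\bigoplus_i R/(t^{n_i})$, giving $\dim_{k'}Q=\sum_i n_i$.

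Finally, I would read off the Fitting ideals from the diagonal presentation. Using $R^p\xrightarrow{A}R^{a+b}\to\coker(\alpha)\to 0$ with $A$ in the above diagonal form, the Fitting ideal in question is generated by the single nonvanishing top minor $\prod_i t^{n_i}=t^{\sum_i n_i}$ of the $a\times a$ block of nonzero entries, giving $\ord_t=\sum_i n_i=\dim_{k'}Q$ as required. The second equality is the classical statement: by the structure theorem $\coker(\beta)\cong\bigoplus_j R/(t^{m_j})$ with $\sum_j m_j=\dim_{k'}\coker(\beta)$, and its zeroth Fitting ideal is generated by the determinant $\prod_j t^{m_j}$ of the diagonal square presentation matrix.

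The only delicate point is conventional bookkeeping to match the Fitting-ideal index against the diagonal form of $A$; the mathematical content is entirely captured by Smith normal form together with the structure theorem for finitely generated modules over the DVR $R$, and poses no real obstacle.
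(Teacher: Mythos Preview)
Your proposal is correct and takes essentially the same approach as the paper: establish that $\ker(\beta)$ is a free rank-$a$ direct summand of $R^{a+b}$, diagonalize the inclusion $\im(\alpha)\hookrightarrow\ker(\beta)$ via Smith normal form, and read off the Fitting ideal from the resulting block-diagonal presentation of $\coker(\alpha)$. The only cosmetic difference is that you present $\coker(\alpha)$ via a surjection $R^p\twoheadrightarrow M$ (adding harmless zero columns) whereas the paper uses $R^a\cong\im(\alpha)$ directly, and for the second claim you invoke the structure theorem on $\coker(\beta)$ rather than diagonalizing $\beta$ itself; since Fitting ideals are presentation-independent, both variations are immaterial.
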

\begin{proof}
First, $\im(\alpha)$ and $\ker(\beta)$ are submodules of the free module $R^{a+b}$, hence free. We see $\ker(\beta)$ has rank at least $a$. On the other hand, since $\rank(M)=a$, we see $\im\alpha$ has rank at most $a$. Since $Q$ is torsion, we must have
\[
\rank\im(\alpha)=\rank\ker(\beta)=a.
\]
By the structure theorem for modules over a PID, we may choose bases so that the map
\[
R^a\simeq\im(\alpha)\to\ker(\beta)\simeq R^a
\]
is given by a diagonal matrix $\diag(t^{e_1},\dots,t^{e_a})$; hence, $\dim_{k'} Q=\sum e_i$.

Next, since $\im\beta$ is a submodule of the free module $R^b$, it is also free. We may therefore choose a splitting of the short exact sequence
\[
0\to\ker(\beta)\to R^b\to \im(\beta)\to 0.
\]
Hence the map $\alpha$ factors as
\[
M\twoheadrightarrow \im(\alpha)\simeq R^a\xrightarrow{\diag(t^{e_1},\dots,t^{e_a})} R^a\simeq \ker(\beta)\subset \ker(\beta)\oplus\im(\beta)\simeq R^b.
\]
We therefore have a presentation
\[
R^a\xrightarrow{P} R^b\to \coker(\alpha)\to 0
\]
where $P$ is the matrix whose upper $a\times a$ block is $\diag(t^{e_1},\dots,t^{e_a})$ and all other entries are $0$. By definition, we then have
\[
\Fitt_{b-a}(\coker(\alpha))=(t^{\sum e_i}),
\]
which gives the desired formula for $\dim_{k'}Q$.

The equality
\[
\dim_{k'}\coker(\beta)=\ord_t\Fitt_0(\coker(\beta)),
\]
is proved in an analogous fashion. Observe that since $\coker(\beta)$ is torsion, we must have $\im\beta\simeq R^b$. By another application of the structure theorem for modules over a PID we may therefore assume $\beta$ is given by a matrix whose first $b\times b$ block is $\diag(t^{e'_1},\dots,t^{e'_b})$ and all other entries are $0$. Then $\dim_{k'}\coker(\beta)=\sum e'_i$ and $\Fitt_0(\coker(\beta))=(t^{\sum e'_i})$, proving the result.
\end{proof}

\begin{definition}\label{def:naiveht}
Let $\cY$ be a $k$-stack and let $\cF$ be a complex of coherent sheaves on $\cY$. For any field extension $k'/k$ and any arc $\gamma\colon\Spec k'[[t]]\to\cY$, define the \emph{naive height} by
\[
\nhet^{(i)}_\cF(\gamma):=\dim_{k'}\bH^i(\gamma^*\cF),
\]
where $\gamma^*$ denotes usual (not derived) pullback.
\end{definition}
\begin{remark}
The naive height differs from the height introduced in \cite[Definition 3.1]{SatrianoUsatine2} in two ways. First, $\cF$ is a complex of sheaves as opposed to an object of the derived category. Second, as pointed out above, $\gamma^*$ is not the derived pullback.
\end{remark}

Next, we show that the naive height of $\cE_{\widetilde{X}/\cX/Y}$ computes the usual heights of $L_{\cX/Y}$ in the degrees of interest.

\begin{proposition}\label{prop:E-LXY-same-ht}
Let $D=\Spec k'[[t]]$, let $\widetilde{\varphi}\colon D\to\widetilde{X}$ be an arc, and let $\varphi=\rho\circ\widetilde{\varphi}$. Then for $i\in\{0,1\}$, we have
\[
\het^{(i)}_{L_{\cX/Y}}(\varphi)=\nhet^{(i)}_{\cE_{\widetilde{X}/\cX/Y}}(\widetilde{\varphi}).
\]
\end{proposition}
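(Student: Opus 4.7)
The plan is to chase long exact sequences coming from two natural exact triangles of cotangent complexes and identify the resulting kernels and cokernels with the cohomology of the naive pullback $\widetilde\varphi^*\cE_{\widetilde X/\cX/Y}$. Throughout I will use that $\widetilde X$ is smooth over $k$ (so $L_{\widetilde X}=\Omega^1_{\widetilde X}$), that $\rho$ is smooth (so $L_{\widetilde X/\cX}=\Omega^1_{\widetilde X/\cX}$, and $L\widetilde\varphi^* L\rho^* = L\varphi^*$), and that $L_Y$ is concentrated in non-positive degrees since $Y$ is a scheme.

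First, apply $L\widetilde\varphi^*$ to the transitivity triangle $L\rho^* L_{\cX/Y}\to L_{\widetilde X/Y}\to\Omega^1_{\widetilde X/\cX}$ for $\widetilde X\xrightarrow{\rho}\cX\xrightarrow{\pi}Y$ to obtain
\[
L\varphi^* L_{\cX/Y}\to L\widetilde\varphi^* L_{\widetilde X/Y}\to\widetilde\varphi^*\Omega^1_{\widetilde X/\cX}.
\]
Separately, apply $L\widetilde\varphi^*$ to the scheme-level triangle $L(\pi\rho)^* L_Y\to\Omega^1_{\widetilde X}\to L_{\widetilde X/Y}$. Since $L_Y$ is in non-positive degrees, the standard $H^0$-comparison gives $H^0(L\psi^* L_Y) = \psi^*\Omega^1_Y$ and $H^i(L\psi^* L_Y) = 0$ for $i>0$, and the associated long exact sequence then shows that $L\widetilde\varphi^* L_{\widetilde X/Y}$ is concentrated in non-positive degrees with
\[
H^0(L\widetilde\varphi^* L_{\widetilde X/Y})=\coker\bigl(\psi^*\Omega^1_Y\to\widetilde\varphi^*\Omega^1_{\widetilde X}\bigr)=\coker(\widetilde\varphi^* d_0).
\]

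Substituting this into the long exact sequence of the first triangle yields
\[
0\to H^0(L\varphi^* L_{\cX/Y})\to\coker(\widetilde\varphi^* d_0)\xrightarrow{\overline{\widetilde\varphi^* d_1}}\widetilde\varphi^*\Omega^1_{\widetilde X/\cX}\to H^1(L\varphi^* L_{\cX/Y})\to 0,
\]
where the key point to verify is that the middle map is induced by $\widetilde\varphi^* d_1$. This relies on $d_1\circ d_0 = 0$, which follows from the long exact sequence of $L\rho^* L_\cX\to L_{\widetilde X}\to\Omega^1_{\widetilde X/\cX}$ (the composition $\rho^*\Omega^1_\cX\to\Omega^1_{\widetilde X}\to\Omega^1_{\widetilde X/\cX}$ vanishes, and $d_0$ factors through $\rho^*\Omega^1_\cX$). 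Once established, the kernel and cokernel of the middle map are precisely $H^0(\widetilde\varphi^*\cE)=\ker(\widetilde\varphi^* d_1)/\im(\widetilde\varphi^* d_0)$ and $H^1(\widetilde\varphi^*\cE)=\coker(\widetilde\varphi^* d_1)$, from which the claimed equality of $k'$-dimensions follows.

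The main delicacy is correctly identifying the connecting map above as being induced by $\widetilde\varphi^* d_1$; this is where the chosen form of $\cE$ becomes crucial, since $d_0$ and $d_1$ are precisely the arrows coming from the natural transformations of the scheme-level and smooth-cover cotangent complex triangles, respectively. The use of naive (rather than derived) pullback on $\cE$ causes no trouble, because the two rightmost terms of $\cE$ are locally free on $\widetilde X$, and the non-positive-degree vanishing of $L_Y$ ensures that the leftmost term contributes to $\coker(\widetilde\varphi^* d_0)$ via its naive pullback $\psi^*\Omega^1_Y = H^0(L\psi^* L_Y)$.
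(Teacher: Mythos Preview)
Your proof is correct and uses essentially the same ingredients as the paper. The paper organizes the argument as a single $3\times 3$ diagram of exact triangles (built from $L\pi^*L_Y\to L_\cX\to L_{\cX/Y}$ and $\rho^*L_\cX\to\Omega^1_{\widetilde X}\to\Omega^1_{\widetilde X/\cX}$), then reads off the desired identifications from the resulting grid of long exact sequences; you instead isolate two of the triangles from that grid (the bottom row $L\rho^*L_{\cX/Y}\to L_{\widetilde X/Y}\to\Omega^1_{\widetilde X/\cX}$ and the middle column $L(\pi\rho)^*L_Y\to\Omega^1_{\widetilde X}\to L_{\widetilde X/Y}$) and chase them sequentially. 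The only extra work your presentation requires is the explicit identification of the connecting map $\coker(\widetilde\varphi^*d_0)\to\widetilde\varphi^*\Omega^1_{\widetilde X/\cX}$ with $\overline{\widetilde\varphi^*d_1}$, which in the paper's $3\times 3$ format is visible directly from the commutativity of the grid; your justification via the factorization $L_{\widetilde X}\to L_{\widetilde X/Y}\to L_{\widetilde X/\cX}$ is the right one and is exactly what underlies that commutativity.
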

\begin{proof}
Let $\psi=\pi\circ\varphi$. Applying $\rho^*$ to the exact triangle
\[
L\pi^*L_Y\to L_\cX\to L_{\cX/Y},
\]
we obtain an exact triangle
\[
\rho^*L\pi^*L_Y\to \rho^*L_\cX\to \rho^*L_{\cX/Y}.
\]
By construction of the cotangent complex of a stack, we have a diagram all of whose columns and rows are exact triangles:
\[
\xymatrix{
\rho^*L\pi^*L_Y\ar[r]\ar[d] & \rho^*L\pi^*L_Y\ar[r]\ar[d] & 0\ar[d]\\
\rho^*L_\cX\ar[r]\ar[d] & \Omega^1_{\widetilde{X}}\ar[r]^-{d_1}\ar[d] & \Omega^1_{\widetilde{X}/\cX}\ar[d]\\
\rho^*L_{\cX/Y}\ar[r] & L_{\widetilde{X}/Y}\ar[r] & \Omega^1_{\widetilde{X}/\cX}
}
\]
Since $\Omega^1_{\widetilde{X}/\cX}$ is a vector bundle, $L\widetilde{\varphi}^*\Omega^1_{\widetilde{X}/\cX}=\widetilde{\varphi}^*\Omega^1_{\widetilde{X}/\cX}$; similarly for $\Omega^1_{\widetilde{X}}$. We next apply $L\widetilde{\varphi}^*$ to the above diagram and take cohomology; using the fact that for $Z\to Y$ a map of schemes, $L^0\widetilde{\varphi}^*L_{Z/Y}=\widetilde{\varphi}^*\Omega^1_{Z/Y}$, we obtain a diagram all of whose rows and columns are exact sequences:
\[
\xymatrix{
 & 0\ar[d] & 0\ar[d] &  & & \\
0\ar[r] & L^{-1}\varphi^*L_{\cX/Y}\ar[r]^-{\simeq}\ar[d] & L^{-1}\varphi^*L_{\widetilde{X}/Y}\ar[d]\ar[r] & 0\ar[d] & & \\
0\ar[r] & \psi^*\Omega^1_Y\ar[r]^-{\simeq}\ar[d] & \psi^*\Omega^1_Y\ar[r]\ar[d]^-{\widetilde{\varphi}^*(d_0)} & 0\ar[d] & & \\
0 \ar[r] & L^0\varphi^*L_\cX\ar[r]\ar[d] & \widetilde{\varphi}^*\Omega^1_{\widetilde{X}}\ar[r]^-{\widetilde{\varphi}^*(d_1)}\ar[d] & \widetilde{\varphi}^*\Omega^1_{\widetilde{X}/\cX}\ar[d]^-{\simeq}\ar[r] & L^1\varphi^*L_\cX\ar[r]
\ar[d]^-{\simeq} & 0\\
0 \ar[r] & L^0\varphi^*L_{\cX/Y}\ar[r]\ar[d] & \widetilde{\varphi}^*\Omega^1_{\widetilde{X}/Y}\ar[r]\ar[d] & \widetilde{\varphi}^*\Omega^1_{\widetilde{X}/\cX}\ar[r]\ar[d] & L^1\varphi^*L_{\cX/Y}\ar[r] & 0\\
 & 0 & 0  & 0 & & \\
}
\]
From this diagram, we have natural isomorphisms
\[
\coker(\widetilde{\varphi}^*(d_1))\simeq L^1\varphi^*L_\cX\simeq L^1\varphi^*L_{\cX/Y}
\]
and 
\[
\ker(\widetilde{\varphi}^*(d_1))/\im(\widetilde{\varphi}^*(d_0))\simeq (L^0\varphi^*L_\cX)/\im(\psi^*\Omega^1_Y)\simeq L^0\varphi^*L_{\cX/Y}
\]
proving the result.
\end{proof}

We now turn to Proposition \ref{prop:ordJi-computes-hts}.

\begin{proof}[{Proof of Proposition \ref{prop:ordJi-computes-hts}}]
Let $R=k'[[t]]$. 
By Proposition \ref{prop:E-LXY-same-ht},
\[
\het^{(1)}_{L_{\cX/Y}}(\varphi)=\nhet^{(1)}_{\cE_{\widetilde{X}/\cX/Y}}(\widetilde{\varphi})=\dim_{k'}(\coker(\widetilde{\varphi}^*(d_1)))
\]
and
\[
\het^{(0)}_{L_{\cX/Y}}(\varphi)=\nhet^{(0)}_{\cE_{\widetilde{X}/\cX/Y}}(\widetilde{\varphi})=\dim_{k'}\ker(\widetilde{\varphi}^*(d_1))/\im(\widetilde{\varphi}^*(d_0)).
\]
Noting that $\widetilde{\varphi}^*\Omega^1_{\widetilde{X}/\cX}$ is free of rank $r$, $\widetilde{\varphi}^*\Omega^1_{\widetilde{X}}$ is free of rank $d+r$, and $\psi^*\Omega^1_Y$ is a finitely generated $R$-module of rank $d$, a direct application of Lemma \ref{l:Fitting-torsion-coh-length} shows
\[
\het^{(0)}_{L_{\cX/Y}}(\varphi)=\ord_t \Fitt_r(\coker(\widetilde{\varphi}^*(d_0)))
\]
and 
\[
\het^{(1)}_{L_{\cX/Y}}(\varphi)=\ord_t \Fitt_0(\coker(\widetilde{\varphi}^*(d_1))).
\]
Lastly, we simply recall that Fitting ideals pull back appropriately:
\[
\Fitt_j(\coker(\widetilde{\varphi}^*(d_i)))=\Fitt_j(\widetilde{\varphi}^*\coker(d_i))=\widetilde{\varphi}^{-1}\Fitt_j(\coker(d_i)),
\]
which finishes the proof.
\end{proof}

\section{The order function of $mK_{\cX/Y}$:~proof of Theorems \ref{thm:main-mcvf} and \ref{thm:main-canonical-divisor}}
\label{sec:relation-fitting}

We keep the notation of Section \ref{sec:hts-fitting}. Our goal is to relate $\omega_{\cX/Y,m}$, $\cJ_0$, $\cJ_1$, and $\cJ:=\sJ_{Y,m}$.


First note that $p$ is smooth and hence, we have an exact sequence
\[
0\to p^*\Omega^1_{\widetilde{U}}\to \Omega^1_{\widetilde{U}}\to \Omega^1_{\widetilde{U}/\cU}\to 0.
\]
This induces an isomorphism
\begin{equation}\label{eqn:det-map-c}
c\colon p^*\Omega^d_\cU\otimes \Omega^r_{\widetilde{U}/\cU}\to \Omega^{d+r}_{\widetilde{U}}.
\end{equation}
From Diagram \eqref{eqn:main-diag}, we then have a natural isomorphism
\[
\widetilde{\jmath}^*(\rho^*\pi^*\omega_{Y,m}\otimes (\Omega^r_{\widetilde{X}/\cX})^{\otimes m})\xrightarrow{\simeq} (p^*\Omega^d_\cU\otimes \Omega^r_{\widetilde{U}/\cU})^{\otimes m}\xrightarrow{\simeq} (\Omega^{d+r}_{\widetilde{U}})^{\otimes m},
\]
By adjunction, we then have an induced map 
\[
\iota_1\colon\rho^*\pi^*\omega_{Y,m}\otimes (\Omega^r_{\widetilde{X}/\cX})^{\otimes m}\to \widetilde{\jmath}_*(\Omega^{d+r}_{\widetilde{U}})^{\otimes m}.
\]
By adjunction, we also have a map
\[
\iota_2\colon(\Omega^{d+r}_{\widetilde{X}})^{\otimes m}\to \widetilde{\jmath}_*(\Omega^{d+r}_{\widetilde{U}})^{\otimes m}.
\]

\begin{lemma}\label{l:iotas-inj}
The maps $\iota_1$ and $\iota_2$ are injections.
\end{lemma}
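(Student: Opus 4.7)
The plan is to show that both $\iota_1$ and $\iota_2$ arise as adjunction maps of the form $\cF \to \widetilde{\jmath}_*\widetilde{\jmath}^*\cF$ for locally free sheaves $\cF$ on the reduced scheme $\widetilde{X}$, with $\widetilde{\jmath}\colon\widetilde{U}\hookrightarrow\widetilde{X}$ a \emph{dense} open immersion. For any such $\cF$, the kernel of the adjunction map is the subsheaf of sections whose set-theoretic support is contained in the nowhere dense closed complement of $\widetilde{U}$; since a locally free sheaf on a reduced scheme is torsion-free, this kernel vanishes, giving injectivity.

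The first step is to verify that $\widetilde{U}$ is dense in $\widetilde{X}$. By Notation \ref{running-notation}, $\cU\hookrightarrow\cX$ is a (non-empty, hence dense) open substack of the irreducible stack $\cX$, and $\rho\colon\widetilde{X}\to\cX$ is a smooth cover, in particular smooth and surjective. Since smooth morphisms are open, the image of every irreducible component of $\widetilde{X}$ is a non-empty open subset of the irreducible stack $\cX$, and so must meet $\cU$. Consequently $\widetilde{U}=\rho^{-1}(\cU)$ meets every component of $\widetilde{X}$, and $\widetilde{U}$ is dense. Moreover, $\widetilde{X}$ is smooth (as a smooth cover of the smooth stack $\cX$) and therefore reduced.

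The second step is to verify that both sources are line bundles on $\widetilde{X}$. For $\iota_2$, the source $(\Omega^{d+r}_{\widetilde{X}})^{\otimes m}$ is the $m$th tensor power of the determinant of $\Omega^1_{\widetilde{X}}$, which is locally free of rank $d+r$ on the smooth scheme $\widetilde{X}$. For $\iota_1$, the $m$-Gorenstein hypothesis on $Y$ makes $\omega_{Y,m}$ invertible, so its pullback $\rho^*\pi^*\omega_{Y,m}$ is invertible; and $\Omega^1_{\widetilde{X}/\cX}$ is locally free of rank $r$ because $\rho$ is smooth of relative dimension $r$, so $(\Omega^r_{\widetilde{X}/\cX})^{\otimes m}$ is invertible as well. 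The tensor product is then a line bundle.

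The third and final step is to observe that, by construction, both $\iota_1$ and $\iota_2$ are the adjunctions of the isomorphisms on $\widetilde{U}$ produced from the splitting \eqref{eqn:det-map-c} of the relative cotangent sequence for the smooth map $p\colon\widetilde{U}\to\cU$. In particular, the restriction $\widetilde{\jmath}^*\iota_i$ is an isomorphism for $i=1,2$. Combining this with the previous two steps, both $\iota_1$ and $\iota_2$ are adjunction maps from a locally free sheaf on the reduced scheme $\widetilde{X}$ to the pushforward of its restriction to the dense open $\widetilde{U}$, and are therefore injective. The only real point to be careful about is the density of $\widetilde{U}$ in $\widetilde{X}$ (noting $\widetilde{X}$ need not be irreducible), but this is handled by the openness of the smooth cover $\rho$ together with the irreducibility of $\cX$.
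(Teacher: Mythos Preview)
Your proof is correct and follows essentially the same approach as the paper's: both argue that the sources are line bundles (hence torsion-free) and that the maps are isomorphisms over the dense open $\widetilde{U}$, so the kernels are torsion and therefore vanish. You have simply supplied more detail, in particular the verification that $\widetilde{U}$ meets every component of $\widetilde{X}$ and the explicit identification of each $\iota_i$ with an adjunction map; the paper compresses all of this into the phrase ``generically isomorphisms.''
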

\begin{proof}
The maps $\iota_i$ are generically isomorphisms, so the kernels are torsion. However, the sources of the maps are line bundles, hence torsion free. Thus, the kernels are zero.
\end{proof}

We next define two maps $\alpha_1$ and $\alpha_2$. Comparing their images will yield our desired relation among $\cJ_0$, $\cJ_1$, $\cJ$, and $\omega_{\cX/Y,m}$. First, we have a natural map
\begin{align*}
\alpha_1\colon(\rho^*\pi^*\Omega^d_Y\otimes \Omega^r_{\widetilde{X}})^{\otimes m}&\longrightarrow 
\rho^*\pi^*\omega_{Y,m}\otimes (\Omega^r_{\widetilde{X}})^{\otimes m}\\
&\xrightarrow{\id\otimes (\wedge^rd_1)^{\otimes m}}
\rho^*\pi^*\omega_{Y,m}\otimes (\Omega^r_{\widetilde{X}/\cX})^{\otimes m}\xhookrightarrow{\iota_1} \widetilde{\jmath}_*(\Omega^{d+r}_{\widetilde{U}})^{\otimes m}.
\end{align*}
On the other hand, since we have a canonical isomorphism
\[
\beta\colon(\Omega^d_{\widetilde{X}})^\vee\otimes\Omega^{d+r}_{\widetilde{X}}\xrightarrow{\simeq}\Omega^r_{\widetilde{X}}
\]
we obtain another map
\begin{align*}
\alpha_2\colon(\rho^*\pi^*\Omega^d_Y\otimes \Omega^r_{\widetilde{X}})^{\otimes m}&\xrightarrow{(\id\otimes\beta^{-1})^{\otimes m}}(\rho^*\pi^*\Omega^d_Y\otimes(\Omega^d_{\widetilde{X}})^\vee\otimes\Omega^{d+r}_{\widetilde{X}})^{\otimes m}\\
&\xrightarrow{(\partial_0\otimes\id)^{\otimes m}} (\Omega^{d+r}_{\widetilde{X}})^{\otimes m}\xhookrightarrow{\iota_2} \widetilde{\jmath}_*(\Omega^{d+r}_{\widetilde{U}})^{\otimes m},
\end{align*}
where $(\id\otimes\beta^{-1})^{\otimes m}$ is an isomorphism and $\partial_0\colon\rho^*\pi^*\Omega^d_Y\otimes(\Omega^d_{\widetilde{X}})^\vee\to\cO_{\widetilde{X}}$ is induced from $d_0$ as in Remark \ref{rmk:Fitt-E}.

\begin{lemma}\label{l:equality-of-alphas}
The images of $\alpha_1$ and $\alpha_2$ are equal.
\end{lemma}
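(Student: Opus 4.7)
The plan is to prove the stronger statement that $\alpha_1 = \alpha_2$ as morphisms, from which equality of images follows trivially. Since the common target $\widetilde{\jmath}_*(\Omega^{d+r}_{\widetilde{U}})^{\otimes m}$ is a pushforward along the open immersion $\widetilde{\jmath}$, the adjunction $(\widetilde{\jmath}^*,\widetilde{\jmath}_*)$ reduces the comparison to showing $\widetilde{\jmath}^*\alpha_1 = \widetilde{\jmath}^*\alpha_2$ as maps of sheaves on $\widetilde{U}$.

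On $\widetilde{U}$ the setup simplifies substantially: because $\cU$ is smooth and embeds as an open subscheme of the smooth locus of $Y$, the map $d_0|_{\widetilde{U}}$ is precisely the injection in the relative cotangent exact sequence for the smooth morphism $p$,
\[
0\to p^*\Omega^1_\cU\to\Omega^1_{\widetilde{U}}\to\Omega^1_{\widetilde{U}/\cU}\to 0,
\]
and the isomorphism $c$ of \eqref{eqn:det-map-c} is the induced wedge pairing; similarly $\beta$ is the standard wedge-product isomorphism $(\Omega^d_{\widetilde{U}})^\vee\otimes\Omega^{d+r}_{\widetilde{U}}\simeq\Omega^r_{\widetilde{U}}$ on the smooth scheme $\widetilde{U}$. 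I would then unwind both maps on local sections $\alpha\otimes\tau\in p^*\Omega^d_\cU\otimes\Omega^r_{\widetilde{U}}$. For $\alpha_1$, applying $\id\otimes\wedge^r d_1$ followed by $c$ produces $(\wedge^d d_0)(\alpha)\wedge\tau\in\Omega^{d+r}_{\widetilde{U}}$, since $\tau$ itself lifts its image $(\wedge^r d_1)(\tau)\in\Omega^r_{\widetilde{U}/\cU}$ and $c$ wedges any such lift with $(\wedge^d d_0)(\alpha)=p^*\alpha$. For $\alpha_2$, writing $\beta^{-1}(\tau)=\sum_i\varphi_i\otimes\omega_i$, the defining property of $\beta^{-1}$ asserts $\sum_i\varphi_i(\gamma)\omega_i=\gamma\wedge\tau$ for every $\gamma\in\Omega^d_{\widetilde{U}}$; combining this with the formula $\partial_0(\alpha\otimes\varphi)=\varphi((\wedge^d d_0)(\alpha))$ coming from Remark \ref{rmk:Fitt-E}, applying $(\partial_0\otimes\id)$ produces $(\wedge^d d_0)(\alpha)\wedge\tau$ as well. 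Taking $m$-th tensor powers, $\alpha_1 = \alpha_2$ on $\widetilde{U}$, and hence on all of $\widetilde{X}$.

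The main obstacle is purely notational: both sides are manifestly wedge products in disguise, so the genuine work lies in carefully matching the canonical identifications — the adjunction defining $\iota_1$, the wedge-pairing isomorphism $\beta$, and the choice of lift implicit in the construction of $c$ — so that the two expressions $(\wedge^d d_0)(\alpha)\wedge\tau$ emerging from the respective computations are literally the same section of $\Omega^{d+r}_{\widetilde{U}}$, with no stray sign introduced by the various conventions.
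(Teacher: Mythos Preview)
Your proof is correct and follows essentially the same approach as the paper: both reduce to the identical explicit wedge-product computation on $\widetilde{U}$, verifying that each path produces $(\wedge^d d_0)(\alpha)\wedge\tau$. Your reduction via the adjunction $(\widetilde{\jmath}^*,\widetilde{\jmath}_*)$ is a slight streamlining of the paper's detour through torsion-free quotients, and you correctly observe the stronger conclusion $\alpha_1=\alpha_2$, which the paper's commutative-diagram argument also implicitly establishes but does not state.
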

\begin{proof}
Since $\widetilde{\jmath}_*(\Omega^{d+r}_{\widetilde{U}})^{\otimes m}$ is generically a line bundle, it is torsion-free. As a result, each map $\alpha_i$ factors as
\[
(\rho^*\pi^*\Omega^d_Y\otimes \Omega^r_{\widetilde{X}})^{\otimes m}\twoheadrightarrow \cF\xrightarrow{\overline{\alpha}_i} \widetilde{\jmath}_*(\Omega^{d+r}_{\widetilde{U}})^{\otimes m}
\]
where $\cF$ is the quotient of $(\rho^*\pi^*\Omega^d_Y\otimes \Omega^r_{\widetilde{X}})^{\otimes m}$ by its torsion subsheaf. It therefore suffices to prove that $\overline{\alpha}_1$ and $\overline{\alpha}_2$ have the same image.

Since $\widetilde{\jmath}_*(\Omega^{d+r}_{\widetilde{U}})^{\otimes m}$ is a line bundle over $\widetilde{U}$, the torsion subsheaf of $(\rho^*\pi^*\Omega^d_Y\otimes \Omega^r_{\widetilde{X}})^{\otimes m}$ is supported on $\widetilde{X}\setminus\widetilde{U}$. In particular, $\overline{\alpha}_i$ factors as
\[
\cF\hookrightarrow\widetilde{\jmath}_*\cF\xrightarrow{\widetilde{\jmath}_*\overline{\alpha}_i}\widetilde{\jmath}_*(\Omega^{d+r}_{\widetilde{U}})^{\otimes m}
\]
and so it suffices to prove $\widetilde{\jmath}^*\overline{\alpha}_1$ and $\widetilde{\jmath}^*\overline{\alpha}_2$ have the same images. Note that $\widetilde{\jmath}^*\overline{\alpha}_i=\widetilde{\jmath}^*\alpha_i$. We have therefore reduced to proving commutativity of the diagram
\[
\xymatrix{
p^*\Omega^d_{\cU}\otimes \Omega^r_{\widetilde{U}}\ar[r]^-{\id\otimes\wedge^r(d_1)} & p^*\Omega^d_{\cU}\otimes \Omega^r_{\widetilde{U}/\cU}\ar[d]^-{\simeq}_-{c}\\
p^*\Omega^d_{\cU}\otimes (\Omega^d_{\widetilde{U}})^\vee\otimes \Omega^{d+r}_{\widetilde{U}}\ar[u]^-{\simeq}_-{\id\otimes\beta}\ar[r]^-{\partial_0\otimes\id} & \Omega^{d+r}_{\widetilde{U}}
}
\]

To do so, we may work locally where all vector bundles are free. The map $c\colon p^*\Omega^d_{\cU}\otimes \Omega^r_{\widetilde{U}/\cU}\xrightarrow{\simeq}\Omega^{d+r}_{\widetilde{U}}$ is given by
\[
c(\epsilon\otimes\omega)=(\wedge^d(d_0))(\epsilon)\wedge\widetilde{\omega},
\]
where $\widetilde{\omega}\in\Omega^r_{\widetilde{U}}$ is any form such that $(\wedge^r(d_1))(\widetilde{\omega})=\omega$. In particular,
\[
c\circ(\id\otimes\wedge^r(d_1))\colon\epsilon\otimes\omega\mapsto (\wedge^d(d_0))(\epsilon)\wedge\omega.
\]
Since
\[
\partial_0(\epsilon\otimes\eta)=\eta\left((\wedge^d(d_0))(\epsilon)\right)
\]
for any $\epsilon\in p^*\Omega^d_{\cU}$ and $\eta\in(\Omega^d_{\widetilde{U}})^\vee$, commutativity of the diagram amounts to showing
\[
\eta\left((\wedge^d(d_0))(\epsilon)\right)\cdot\omega=(\wedge^d(d_0))(\epsilon)\wedge\eta(\omega)
\]
for $\omega\in\Omega^{d+r}_{\widetilde{U}}$.

It is in fact true that
\[
\eta(\gamma)\cdot\omega=\gamma\wedge\eta(\omega)
\]
for any $\gamma\in\Omega^d_{\widetilde{U}}$, $\eta\in(\Omega^d_{\widetilde{U}})^\vee$, and $\omega\in\Omega^{d+r}_{\widetilde{U}}$. Indeed, upon locally fixing a choice of basis for $\Omega^1_{\widetilde{U}}$, by linearity, we can assume $\eta$, $\gamma$, and $\omega$ are exterior products of basis vectors. One immediately checks that if $\eta$ is not dual to $\gamma$, then $\gamma\wedge\eta(\omega)=0=\eta(\gamma)\omega$. If $\eta$ is dual to $\gamma$, then let $\gamma'$ be an exterior product of basis vectors with $\omega=\gamma\wedge\gamma'$; then we see $\gamma\wedge\eta(\omega)=\gamma\wedge\gamma'=\omega=\eta(\gamma)\omega$.
\end{proof}

Next, since $\widetilde{X}$ is smooth, each connected component $\widetilde{X}_i$ is smooth and integral. Let $\xi_i$ denote the generic point of $\widetilde{X}_i$ and let $K(\widetilde{X}_i)$ denote its function field. Consider the sheaf of rational functions $\cK_{\widetilde{X}}$ which, when restricted to $\widetilde{X}_i$, is the constant sheaf associated to $K(\widetilde{X}_i)$. Since $\widetilde{\jmath}_*(\Omega^{d+r}_{\widetilde{U}})^{\otimes m}$ is torsion-free and generically a line bundle, we have canonical injections $\widetilde{\jmath}_*(\Omega^{d+r}_{\widetilde{U}})^{\otimes m}|_{\widetilde{X}_i}\to(\Omega^{d+r}_{\widetilde{U}})^{\otimes m}_{\xi_i}$. Thus, upon choosing, for each $i$, a non-zero element $\omega_i\in\Omega^{d+r}_{\widetilde{U},\xi_i}$, we have isomorphisms $(\Omega^{d+r}_{\widetilde{U}})^{\otimes m}_{\xi_i}\simeq K(\widetilde{X}_i)$, and hence an embedding
\[
\iota\colon\widetilde{\jmath}_*(\Omega^{d+r}_{\widetilde{U}})^{\otimes m}\hookrightarrow\cK_{\widetilde{X}}.
\]
The embeddings $\iota\circ\iota_i$ then endow $\rho^*\pi^*\omega_{Y,m}\otimes (\Omega^r_{\widetilde{X}/\cX})^{\otimes m}$ and $(\Omega^{d+r}_{\widetilde{X}})^{\otimes m}$ with the structure of invertible fractional ideals, and hence
\begin{equation}\label{eqn:omegaXYm}
\rho^*\omega_{\cX/Y,m}\simeq\rho^*\pi^*\omega_{Y,m}^\vee \otimes((\Omega^r_{\widetilde{X}/\cX})^\vee\otimes\Omega^{d+r}_{\widetilde{X}})^{\otimes m}
\end{equation}
with the structure of a fraction ideal as well; the isomorphism above is canonical and comes from the canonical isomorphisms
\[
\rho^*\det(L_\cX)\simeq \det(\rho^*L_{\cX})\simeq\Omega^{d+r}_{\widetilde{X}}\otimes(\Omega^r_{\widetilde{X}/\cX})^\vee.
\]
induced by the exact triangle \eqref{eq:rhoL_X}. We denote by $mK_{\widetilde{X}/\cX/Y,\iota}$ the induced Cartier divisor on $\widetilde{X}$. We now show that $mK_{\widetilde{X}/\cX/Y,\iota}$  is independent of $\iota$ and agrees with the pullback of $mK_{\cX/Y}$.

\begin{lemma}\label{l:two-defs-ofKXY-agree}
We have an equality of Cartier divisors $\rho^*(mK_{\cX/Y})=mK_{\widetilde{X}/\cX/Y,\iota}$.
\end{lemma}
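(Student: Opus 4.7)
The plan is to show that both Cartier divisors arise from the same canonical rational section of $\rho^*\omega_{\cX/Y,m}$. By Definition \ref{def:rel-canonical-divisor}, $mK_{\cX/Y}$ is the Cartier divisor on $\cX$ associated via Proposition \ref{prop:rat-sec-->Cartier} to the canonical trivialization $s\colon \eta^*\omega_{\cX/Y,m}\xrightarrow{\sim}\cO_{k(\cX)}$. Since $\rho$ is smooth and the construction in Proposition \ref{prop:rat-sec-->Cartier} commutes with flat pullback, $\rho^*(mK_{\cX/Y})$ is the Cartier divisor associated to $\rho^*s$, viewed as a rational section of $\rho^*\omega_{\cX/Y,m}$ trivializing it at each generic point $\xi_i$ (using that $\rho(\xi_i)=\eta$).

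Unpacking the construction preceding the lemma, $mK_{\widetilde{X}/\cX/Y,\iota}$ is the Cartier divisor associated to the rational section $t$ of $\rho^*\omega_{\cX/Y,m}$ whose image in $\cK_{\widetilde{X}}$ under the embedding $\rho^*\omega_{\cX/Y,m}\simeq A_2\otimes A_1^\vee\hookrightarrow\cK_{\widetilde{X}}$ is $1$, where $A_1:=\rho^*\pi^*\omega_{Y,m}\otimes(\Omega^r_{\widetilde{X}/\cX})^{\otimes m}$ and $A_2:=(\Omega^{d+r}_{\widetilde{X}})^{\otimes m}$ are embedded via $\iota\circ\iota_1$ and $\iota\circ\iota_2$, and the isomorphism is \eqref{eqn:omegaXYm}. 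Thus it suffices to show $\rho^*s=t$ as rational sections, and since $\widetilde{X}$ is a disjoint union of smooth integral components this may be verified on the dense open $\widetilde{U}\subseteq\widetilde{X}$.

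On $\widetilde{U}$, the map $\iota_2$ restricts to the identity by construction, while $\iota_1$ restricts to the isomorphism $c^{\otimes m}$ from \eqref{eqn:det-map-c}, using the canonical identification $\pi|_\cU^*\omega_{Y,m}\simeq(\Omega^d_\cU)^{\otimes m}$ coming from the fact that $\pi\circ j=i$ is an open immersion into $Y^{\mathrm{sm}}$. Consequently $A_1|_{\widetilde{U}}$ and $A_2|_{\widetilde{U}}$ are both the trivial fractional ideal $\cO_{\widetilde{U}}\subset\cK_{\widetilde{X}}|_{\widetilde{U}}$, so $t|_{\widetilde{U}}$ is the trivialization of $\rho^*\omega_{\cX/Y,m}|_{\widetilde{U}}$ induced by these identifications.

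The remaining step, which I expect to be the main obstacle, is to match $t|_{\widetilde{U}}$ with $\rho^*s|_{\widetilde{U}}$. This amounts to checking that the identification $\rho^*\omega_\cX\simeq\Omega^{d+r}_{\widetilde{X}}\otimes(\Omega^r_{\widetilde{X}/\cX})^\vee$ coming from \eqref{eq:rhoL_X} agrees, on $\widetilde{U}$, with the inverse of the smooth-map determinant isomorphism $c$, which follows from the naturality of $\det$ applied to the cotangent complex exact triangle for the smooth map $p\colon\widetilde{U}\to\cU$. Once this compatibility is verified, $\rho^*s|_{\widetilde{U}}=t|_{\widetilde{U}}$ by construction, hence $\rho^*(mK_{\cX/Y})=mK_{\widetilde{X}/\cX/Y,\iota}$ on all of $\widetilde{X}$; as a byproduct, $mK_{\widetilde{X}/\cX/Y,\iota}$ is independent of the choice of $\iota$.
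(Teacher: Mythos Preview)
Your approach is correct and is essentially the same as the paper's: both proofs show that the two Cartier divisors come from the \emph{same} rational trivialization of $\rho^*\omega_{\cX/Y,m}$, and both reduce the verification to the dense open $\widetilde{U}$. The paper carries out your deferred ``main obstacle'' by an explicit local computation with bases $\epsilon,\gamma,\tau$, ultimately checking the identity $\tau^\vee_{\xi_i}(\sigma_i)=g_ih_i^{-1}$; this is precisely the compatibility between the determinant isomorphism coming from \eqref{eq:rhoL_X} and the map $c$ that you invoke abstractly via ``naturality of $\det$.''

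One small slip: you write that $A_1|_{\widetilde{U}}$ and $A_2|_{\widetilde{U}}$ are each the trivial fractional ideal $\cO_{\widetilde{U}}$, but the embedding $\iota$ depends on the chosen generators $\omega_i$, so neither need be $\cO_{\widetilde{U}}$. What is true (and sufficient) is that on $\widetilde{U}$ both $\iota_1$ and $\iota_2$ land isomorphically in $(\Omega^{d+r}_{\widetilde{U}})^{\otimes m}$, hence $A_1|_{\widetilde{U}}$ and $A_2|_{\widetilde{U}}$ define the \emph{same} fractional ideal and their quotient is $\cO_{\widetilde{U}}$; this is enough for your argument and also makes transparent why the answer is independent of $\iota$.
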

\begin{proof}
We recall the definition of $mK_{\cX/Y}$ and compute its pullback under $\rho$. We have a canonical isomorphism $j^*\omega_{\cX/Y,m}^\vee\simeq\cO_{\cU}$, hence a canonical identification of the generic fiber of $\omega_{\cX/Y,m}^\vee$ with $k(\cU)$; the resulting fractional ideal structure on $\omega_{\cX/Y,m}^\vee$ 
yields the $\QQ$-divisor $-mK_{\cX/Y}$. Next, pulling back the adjunction map $\omega_{\cX/Y,m}^\vee\hookrightarrow j_*j^*\omega_{\cX/Y,m}^\vee$ yields an injection
\[
\rho^*\omega_{\cX/Y,m}^\vee\hookrightarrow \rho^*j_*j^*\omega_{\cX/Y,m}^\vee \xrightarrow{\simeq} \widetilde{\jmath}_*p^*j^*\omega_{\cX/Y,m}^\vee=\widetilde{\jmath}_*\widetilde{\jmath}^*\rho^*\omega_{\cX/Y,m}^\vee
\]
where the second map is an isomorphism by flat base change. By equations \eqref{eqn:det-map-c} and \eqref{eqn:omegaXYm}, we have canonical isomorphisms
\[
\widetilde{\jmath}^*\rho^*\omega_{\cX/Y,m}^\vee\xrightarrow{\simeq}(p^*\Omega^d_\cU\otimes (\Omega^{d+r}_{\widetilde{U}})^{\vee}\otimes \Omega^r_{\widetilde{U}/\cU})^{\otimes m}\xrightarrow{\simeq} \cO_{\widetilde{U}}.
\]
From the canonical isomorphism $\cO_{\widetilde{U},\xi_i}\simeq K(\widetilde{X}_i)$, we obtain an injection
\[
\delta\colon\rho^*\pi^*\omega_{Y,m} \otimes(\Omega^r_{\widetilde{X}/\cX}\otimes(\Omega^{d+r}_{\widetilde{X}})^\vee)^{\otimes m}\simeq\rho^*\omega_{\cX/Y,m}^\vee\hookrightarrow\cK_{\widetilde{X}}
\]
which defines the Cartier divisor $\rho^*(-mK_{\cX/Y})$. 

On the other hand, we have an embedding
\[
\delta_\iota\colon\rho^*\pi^*\omega_{\cX/Y,m}\hookrightarrow \cK_{\widetilde{X}}
\]
defining $-mK_{\widetilde{X}/\cX/Y,\iota}$. In order to prove $\delta_i=\delta$, and hence $\rho^*(mK_{\cX/Y})=mK_{\widetilde{X}/\cX/Y,\iota}$, it suffices to look locally, where we may assume $\rho^*\pi^*\omega_{Y,m}$, $\Omega^r_{\widetilde{X}/\cX}$, and $\Omega^{d+r}_{\widetilde{X}}$ are free and generated, respectively, by $\epsilon$, $\gamma$, and $\tau$. Let $\tau^\vee\in(\Omega^{d+r}_{\widetilde{X}})^\vee$ be the dual basis element to $\tau$. Then we need only check that $\delta$ and $\delta_\iota$ have the same images on the basis element $\epsilon\otimes\gamma^{\otimes m}\otimes(\tau^\vee)^{\otimes m}$. Recall that $\iota$ is defined by fixing, for each $i$, a non-zero element $\omega_i\in\Omega^{d+r}_{\widetilde{U},\xi_i}$. For each $i$, we additionally fix non-zero elements $\epsilon_i\in(p^*\Omega^d_{\cU})_{\xi_i}$ and $\widetilde{\gamma}_i\in\Omega^{d+r}_{\widetilde{U}/\cU,\xi_i}$ such that
\[
(\wedge^r(d_1))(\widetilde{\gamma}_i)=\gamma_{\xi_i}
\]
where the subscript $\xi_i$ denotes the image in the generic fiber. Let
\[
\sigma_i=(\wedge^d(d_0))(\epsilon_i)\wedge\widetilde{\gamma}_i\quad\textrm{and}\quad\epsilon_{\xi_i}=f_i\epsilon_i^{\otimes m}
\]
with $f_i\in K(\widetilde{X}_i)$. Since the isomorphism
\[
(p^*\Omega^d_\cU)_{\xi_i}\otimes \Omega^r_{\widetilde{U}/\cU,\xi_i}\otimes (\Omega^{d+r}_{\widetilde{U},\xi_i})^{\vee}\xrightarrow{\simeq} \cO_{\widetilde{U},\xi_i}
\]
maps $\epsilon_i\otimes\gamma_{\xi_i}\otimes\tau^\vee_{\xi_i}$ to $\tau_{\xi_i}^\vee(\sigma_i)$, we see
\[
\delta(\epsilon\otimes\gamma^{\otimes m}\otimes(\tau^\vee)^{\otimes m})=( f_i (\tau^\vee_{\xi_i}(\sigma_i))^m )_i\in\prod_i K(\widetilde{X}_i).
\]
Letting
\[
\sigma_i=g_i\omega_i\quad\textrm{and}\quad \tau_{\xi_i}=h_i\omega_i
\]
with $g_i,h_i\in K(\widetilde{X}_i)$, we have
\[
\delta_\iota(\epsilon\otimes\gamma^{\otimes m}\otimes(\tau^\vee)^{\otimes m})=( f_ig_i^mh_i^{-m} )_i\in\prod_i K(\widetilde{X}_i).
\]
Since $\tau^\vee_{\xi_i}=h_i^{-1}\omega_i^\vee$, we see $\tau^\vee_{\xi_i}(\sigma_i)=g_ih_i^{-1}$, thereby proving $\delta$ and $\delta_\iota$ have the same images.
\end{proof}

We may now obtain the desired relation among our ideals.

\begin{theorem}\label{thm:Fitting-ideals-key-relation}
We have an equality
\begin{equation}\label{eqn:equalityIdeals}
\cJ_0^m(\Omega^{d+r}_{\widetilde{X}})^{\otimes m}=\cJ\cJ_1^m(\rho^*\pi^*\omega_{Y,m} \otimes(\Omega^r_{\widetilde{X}/\cX})^{\otimes m})
\end{equation}
as subsheaves of $\widetilde{\jmath}_*(\Omega^{d+r}_{\widetilde{U}})^{\otimes m}$. Furthermore,
\begin{equation}\label{eqn:equalityOrderFunctions}
m\ord_{\cJ_0} - m\ord_{\cJ_1} - \ord_\cJ\circ\sL(\pi\circ\rho) = \ord_{mK_{\cX/Y}}\circ\sL(\rho).
\end{equation}
\end{theorem}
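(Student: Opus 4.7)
The plan is to derive both conclusions from Lemma \ref{l:equality-of-alphas}, which supplies the key algebraic input, together with Lemma \ref{l:two-defs-ofKXY-agree}, which identifies the fractional-ideal structure on $\rho^*\omega_{\cX/Y,m}$ with the Cartier divisor $\rho^*(mK_{\cX/Y})$. The main work is identifying the images of $\alpha_1$ and $\alpha_2$ explicitly in terms of the ideals $\cJ$, $\cJ_0$, $\cJ_1$; the second claim then follows by taking order functions and carefully bookkeeping sign conventions.

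For \eqref{eqn:equalityIdeals}, I compute $\im(\alpha_1)$ and $\im(\alpha_2)$ inside $\widetilde{\jmath}_*(\Omega^{d+r}_{\widetilde{U}})^{\otimes m}$. By Remark \ref{rmk:Fitt-E}, the image of $\partial_0\colon\rho^*\pi^*\Omega^d_Y\otimes(\Omega^d_{\widetilde{X}})^\vee\to\cO_{\widetilde{X}}$ is $\cJ_0$, so $(\partial_0\otimes\id)^{\otimes m}$ has image $\cJ_0^m\cdot(\Omega^{d+r}_{\widetilde{X}})^{\otimes m}$, giving $\im(\alpha_2)=\cJ_0^m\cdot\iota_2\bigl((\Omega^{d+r}_{\widetilde{X}})^{\otimes m}\bigr)$. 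For $\alpha_1$, the map $(\Omega^d_Y)^{\otimes m}\to\omega_{Y,m}$ has image $\cJ\,\omega_{Y,m}$ by the definition of $\cJ=\sJ_{Y,m}$, and by Remark \ref{rmk:Fitt-E} applied to $d_1$, the map $\wedge^r d_1\colon\Omega^r_{\widetilde{X}}\to\Omega^r_{\widetilde{X}/\cX}$ has image $\cJ_1\cdot\Omega^r_{\widetilde{X}/\cX}$. Chaining these through the three factors of $\alpha_1$ yields $\im(\alpha_1)=\cJ\,\cJ_1^m\cdot\iota_1\bigl(\rho^*\pi^*\omega_{Y,m}\otimes(\Omega^r_{\widetilde{X}/\cX})^{\otimes m}\bigr)$, and equating with $\im(\alpha_2)$ via Lemma \ref{l:equality-of-alphas} produces \eqref{eqn:equalityIdeals}.

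For \eqref{eqn:equalityOrderFunctions}, I embed everything into $\cK_{\widetilde{X}}$ via $\iota$ and write $I_i:=\iota\circ\iota_i(\cdot)$, so that $I_1,I_2$ are invertible fractional ideals. Then \eqref{eqn:equalityIdeals} reads $\cJ_0^m\,I_2=\cJ\,\cJ_1^m\,I_1$ in $\cK_{\widetilde{X}}$. Taking order functions termwise (using that $\ord_{\cI\cI'}=\ord_\cI+\ord_{\cI'}$ whenever $\cI'$ is invertible) and rearranging gives
\[
\ord_{I_2}-\ord_{I_1}=m\ord_{\cJ_1}-m\ord_{\cJ_0}+\ord_{\cJ}\circ\sL(\pi\circ\rho).
\]
By the canonical isomorphism \eqref{eqn:omegaXYm} together with the construction of $mK_{\widetilde{X}/\cX/Y,\iota}$ preceding Lemma \ref{l:two-defs-ofKXY-agree}, the invertible fractional ideal $I_2I_1^{-1}$ equals $\cO(mK_{\widetilde{X}/\cX/Y,\iota})$, whose order function is $-\ord_{mK_{\widetilde{X}/\cX/Y,\iota}}$. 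Substituting and applying Lemma \ref{l:two-defs-ofKXY-agree} to rewrite this as $-\ord_{mK_{\cX/Y}}\circ\sL(\rho)$ yields \eqref{eqn:equalityOrderFunctions} after rearrangement.

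The main obstacle is the careful tracking of sign conventions when passing between line bundles, their embeddings as invertible fractional ideals, and the associated Cartier divisors; in particular, one must confirm that \eqref{eqn:omegaXYm} identifies $\rho^*\omega_{\cX/Y,m}$ with $I_2I_1^{-1}$ rather than its inverse, so that the sign of $mK_{\cX/Y}$ in the final formula is correct. Once this identification is pinned down, the rest is a direct computation, as Lemmas \ref{l:equality-of-alphas} and \ref{l:two-defs-ofKXY-agree} encapsulate the geometric content.
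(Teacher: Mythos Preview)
Your proposal is correct and follows essentially the same approach as the paper: identify $\im(\alpha_1)$ and $\im(\alpha_2)$ using Remark~\ref{rmk:Fitt-E} and the definition of $\cJ=\sJ_{Y,m}$, invoke Lemma~\ref{l:equality-of-alphas} to obtain \eqref{eqn:equalityIdeals}, and then pass to order functions using the fractional-ideal structure on $\rho^*\omega_{\cX/Y,m}$ together with Lemma~\ref{l:two-defs-ofKXY-agree}. The paper's proof is slightly terser in the second step, simply asserting that \eqref{eqn:equalityIdeals} ``immediately implies'' the equality of order functions, whereas you spell out the fractional-ideal bookkeeping $I_1,I_2$ and the sign convention $\ord_{\cO(D)}=-\ord_D$ explicitly; your caution there is well placed and the signs are handled correctly.
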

\begin{proof}
We compute the images of $\alpha_1$ and $\alpha_2$. By Remark \ref{rmk:Fitt-E}, the image of $\partial_0$ is $\cJ_0$, and so $\alpha_2$ factors as
\begin{align*}
(\rho^*\pi^*\Omega^d_Y\otimes\Omega^r_{\widetilde{X}})^{\otimes m} &\xrightarrow{\simeq}(\rho^*\pi^*\Omega^d_Y\otimes(\Omega^d_{\widetilde{X}})^\vee\otimes\Omega^{d+r}_{\widetilde{X}})^{\otimes m}\\
&\twoheadrightarrow (\cJ_0\otimes\Omega^{d+r}_{\widetilde{X}})^{\otimes m}
\twoheadrightarrow \cJ_0^m(\Omega^{d+r}_{\widetilde{X}})^{\otimes m}
\xhookrightarrow{\iota_2} \widetilde{\jmath}_*(\Omega^{d+r}_{\widetilde{U}})^{\otimes m}.
\end{align*}
On the other hand, using the definition of $\cJ$ and Remark \ref{rmk:Fitt-E}, we see $\alpha_1$ factors as 
\begin{align*}
(\rho^*\pi^*\Omega^d_Y)^{\otimes m}\otimes\Omega^r_{\widetilde{X}}&\twoheadrightarrow \cJ\cO_{\widetilde{X}}\rho^*\pi^*\omega_{Y,m} \otimes(\cJ_1\Omega^r_{\widetilde{X}/\cX})^{\otimes m}\\
&\twoheadrightarrow \cJ\cJ_1^m(\rho^*\pi^*\omega_{Y,m} \otimes(\Omega^r_{\widetilde{X}/\cX})^{\otimes m})
\xhookrightarrow{\iota_1} 
\widetilde{\jmath}_*(\Omega^{d+r}_{\widetilde{U}})^{\otimes m}.
\end{align*}
By Lemma \ref{l:equality-of-alphas}, we have our desired equality \eqref{eqn:equalityIdeals} as subsheaves of $\widetilde{\jmath}_*(\Omega^{d+r}_{\widetilde{U}})^{\otimes m}$.

Considering the order function associated to $mK_{\widetilde{X}/\cX/Y,\iota}$, equality \eqref{eqn:equalityIdeals} immediately implies that the lefthand side of \eqref{eqn:equalityOrderFunctions} is equal to $\ord_{mK_{\widetilde{X}/\cX/Y,\iota}}$. This latter quantity is then equal to $\ord_{mK_{\cX/Y}}\circ\sL(\rho)$ by Lemma \ref{l:two-defs-ofKXY-agree}.
\end{proof}

We are now ready to prove two of the main theorems of this paper.

\begin{proof}[{Proof of Theorem \ref{thm:main-canonical-divisor}}]

In light of Proposition \ref{prop:ordJi-computes-hts}, we may rewrite equation \eqref{eqn:equalityOrderFunctions} as
\begin{equation}\label{eqn:almost-main-them}
m\het^{(0)}_{L_{\cX/Y}}\circ\sL(\rho) -m\het^{(1)}_{L_{\cX/Y}}\circ\sL(\rho) -\ord_\cJ\circ\sL(\pi\circ\rho)=\ord_{mK_{\cX/Y}}\circ\sL(\rho).
\end{equation}
%
Given any arc $\varphi\colon \Spec k'[[t]]\to\cX$, the two sides of 
this equation are not altered when replacing $k'$ by a larger field extension. After such an extension, we may assume the closed point of $\varphi$ lifts to $\widetilde{X}$, and hence by smoothness of $\rho$, we obtain a lift $\widetilde\varphi\colon D\to\widetilde{X}$ of $\varphi$. Evaluating \eqref{eqn:almost-main-them} 
on $\widetilde\varphi$ establishes the result.
\end{proof}

\begin{proof}[{Proof of Theorem \ref{thm:main-mcvf}}]
This is immediate from Theorems \ref{thm:main-canonical-divisor} and \ref{theoremChangeOfVariables-reformulation}.
\end{proof}

\section{Order functions determine Cartier divisors}\label{sec:orderfnc->cartier}

In this section, we prove that a Cartier divisor is determined by its order function away from a closed substack.

\begin{proposition}\label{prop:order-fnc-determines-div}
Let $\cY$ be a smooth irreducible finite type Artin stack over $k$, let $\cD_1$ and $\cD_2$ be Cartier divisors on $\cY$, and let $\cZ\subset\cY$ be a closed substack such that $|\cZ|\subsetneq|\cY|$. Set $\cZ'$ to be the union of $\cZ$ and the supports of $\cD_1$ and $\cD_2$. If $\ord_{\cD_1} = \ord_{\cD_2}$ on $|\sL(\cY) \setminus \sL(\cZ')|$, then $\cD_1 = \cD_2$.
\end{proposition}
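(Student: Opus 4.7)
The plan is to pass to a smooth scheme cover $\rho\colon\widetilde{Y}\to\cY$ and then detect the coefficients of $\cD := \cD_1-\cD_2$, viewed as a Weil divisor, by means of well-chosen arcs at codimension-one generic points. Reducing to showing $\cD = 0$ given $\ord_\cD\equiv 0$ on $|\sL(\cY)|\setminus|\sL(\cZ')|$, I first note that $|\cZ'|\subsetneq|\cY|$, since $\cY$ is irreducible and the supports of the $\cD_i$ are proper closed substacks. Setting $\widetilde\cD := \rho^*\cD$ and $\widetilde\cZ' := \rho^{-1}(\cZ')$, it suffices to prove $\widetilde\cD = 0$, because Cartier divisors form a sheaf in the smooth topology. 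Since order functions pull back compatibly under $\rho$, and an arc $\widetilde\varphi$ of $\widetilde{Y}$ factors through $\widetilde\cZ'$ iff $\rho\circ\widetilde\varphi$ factors through $\cZ'$, the hypothesis transfers to $\ord_{\widetilde\cD}(\widetilde\varphi) = 0$ for every arc of $\widetilde{Y}$ not factoring through the proper closed subscheme $\widetilde\cZ'$.

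Working componentwise, assume $\widetilde{Y}$ is irreducible. Smoothness ensures every Cartier divisor is Weil, so write $\widetilde\cD = \sum_i n_i[D_i]$ with each $D_i$ a prime divisor of $\widetilde{Y}$, necessarily contained in $\widetilde\cZ'$. For each $i$, I construct an arc $\widetilde\varphi_i$ centered at the generic point $\eta_i$ of $D_i$ whose generic point in $\widetilde{Y}$ is the generic point of $\widetilde{Y}$. The local ring $\cO_{\widetilde{Y},\eta_i}$ is a DVR with uniformizer $\pi$; Cohen's structure theorem in equicharacteristic zero identifies its completion with $\kappa(\eta_i)[[\pi]]$, and the assignment $\pi\mapsto t$ produces $\widetilde\varphi_i\colon\Spec\kappa(\eta_i)[[t]]\to\widetilde{Y}$. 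Since no other $D_j$ passes through $\eta_i$, a local equation of $\widetilde\cD$ at $\eta_i$ is $u\pi^{n_i}$ with $u$ a unit, whence $\ord_{\widetilde\cD}(\widetilde\varphi_i) = n_i$. The transferred hypothesis then forces $n_i = 0$, and since $i$ is arbitrary, $\widetilde\cD = 0$ and thus $\cD = 0$.

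The main obstacle will be the construction of $\widetilde\varphi_i$ together with the verification that it does not factor through $\widetilde\cZ'$; this amounts to noting that the local ring map $\cO_{\widetilde{Y},\eta_i}\to\kappa(\eta_i)[[t]]$ sends $\pi\mapsto t$, so upon inverting $t$ the induced map factors through the function field $K(\widetilde{Y})$, and in particular through the complement of any proper closed subset. The remaining ingredients---smooth descent for Cartier divisors, the equivalence of Cartier and Weil divisors on smooth schemes, and the compatibility of order functions with smooth pullback---are standard.
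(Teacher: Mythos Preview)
Your proof is correct and follows the same overall strategy as the paper: pass to a smooth scheme cover, reduce to the scheme case, and detect the multiplicities of the difference divisor by evaluating the order function on well-chosen test arcs. The distinction lies entirely in how the test arcs are constructed.

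The paper works at a smooth \emph{closed} point $x$ of a given prime divisor $D$, chosen to avoid all other components of $\cZ'$, $\cD_1$, and $\cD_2$; it then produces a $1$-jet at $x$ not tangent to $\cZ'$ via a tangent-space dimension count and lifts it to an arc using smoothness of $\widetilde{Y}$. By contrast, you go directly to the \emph{generic} point $\eta_i$ of each prime divisor, where the local ring is already a DVR, and invoke Cohen's structure theorem to manufacture the arc $\Spec\kappa(\eta_i)[[t]]\to\widetilde{Y}$. Your route is arguably cleaner: the generic point of $D_i$ automatically lies off every other prime component, so no avoidance argument is needed, and the identity $\ord_{\widetilde\cD}(\widetilde\varphi_i)=n_i$ is immediate. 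The paper's route has the minor advantage that its arcs can be taken over $k$ itself (since $k$ is algebraically closed and one works at closed points), and it avoids appealing to Cohen's theorem; but neither point is essential for the statement at hand.
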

\begin{proof}
Let $\rho\colon Y\to\cY$ be a smoth cover by a scheme. Then each connected component of $V$ is a smooth integral finite type scheme. By considering $Z:=\cZ\times_Y\cY$ and $D_i:=\rho^*\cD_i$ restricted to each component of $Y$, we see it suffices to prove the result when $\cY=Y$. 

The statement only depends on the support of $Z'$, so we replace $Z'$ with its reduced structure to assume its irreducible components are generically smooth. Now, it is sufficient to show for every irreducible divisor $D$ supported in $Z'$, that $D_1$ and $D_2$ have the same multiplicity at $D$. To do this, we consider an arc $\phi\in\cL(X)$ whose generic point maps to $X \setminus Z'$ and whose special point maps to $D \setminus D'$, where $D'$ denotes the union of the components of $D_1$ and $D_2$ not contained in $D$. To see that such an arc exists, consider a smooth point $x\in D \setminus D''$, where $D''$ denotes the union of the components of $Z'$, $D_1$ and $D_2$ not contained in $D$. Then we have a strict inequality of tangent space dimenisions $\dim T_{Z',x}<\dim T_{X,x}$, so there is a $1$-jet $\phi_1$ that does not factor through $Z'$ but whose special point maps to $x$. 
Since $X$ is smooth, $\phi_1$ lifts to our desired arc $\phi$. 

Lastly, since the special point of $\phi$ maps to $x$ which is in $D$ but not in any other component of $D_i$, the multiplicity of $D_i$ at $D$ is given by $\ord_{D_i}(\phi)/\ord_D(\phi)$, which is independent of $i$ since $\ord_{D_1}(\phi) = \ord_{D_2}(\phi)$ and $\phi\in |\sL(X)\setminus \sL(X \setminus Z')|$.
\end{proof}

\begin{proof}[{Proof of Corollary \ref{cor:triangle-iff-crepant}}]
Let $\cU\hookrightarrow \cX$ be the open substack in Theorem \ref{thm:main-canonical-divisor}. If $\pi$ is crepant, then \eqref{eqn:condition-triangle} holds with $\cV=\cU$. Conversely, if $\cV\hookrightarrow \cX$ is the open substack in \eqref{eqn:condition-triangle}, then $\ord_{mK_{\cX/Y}}=0$ on $|\sL(\cX)|\setminus|\sL(\cZ)|$, where $\cZ=\cX\setminus(\cU\cap\cV)$. By Proposition \ref{prop:order-fnc-determines-div}, we see $mK_{\cX/Y}=0$.
\end{proof}

\section{The case where $Y$ is lci}\label{sectionLciCase}

It is worth pointing out that in the case where $Y$ is lci, there is an alternative and aesthetically pleasing proof of Theorem \ref{thm:main-mcvf}. The key idea is that $L_{\cX/Y}$ is a perfect complex whose cohomology is concentrated in degrees $[-1,1]$ and that $\het^{(-1)}_{L_{\cX/Y}}$ agrees with the function $\ord_\cJ\circ\sL(\pi)$; as a result, the alternating sum of its height functions precisely recovers our main quantity of interest:
\[
\sum_{i\in\Z}(-1)^i\het^{(i)}_{L_{\cX/Y}} = \het^{(0)}_{L_{\cX/Y}} - \het^{(1)}_{L_{\cX/Y}} - \ord_\cJ\circ\sL(\pi).
\]

\begin{proposition}\label{prop:Ylci->perfect-ht-2-vanish}
Using Notation \ref{running-notation}, if $Y$ is lci, then $L_{\cX/Y}$ is a perfect complex. Moreover,
\[
\het^{(-1)}_{L_{\cX/Y}}=\ord_{\cJ_Y}\circ\sL(\pi)
\]
and $\het^{(i)}_{L_{\cX/Y}}=0$ for all $i\notin\{-1,0,1\}$. 
\end{proposition}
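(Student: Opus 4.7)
The plan is to exploit the distinguished triangle
\[
L\pi^* L_Y \to L_\cX \to L_{\cX/Y}
\]
and its derived pullback along arcs of $\cX$. Perfectness of $L_{\cX/Y}$ will follow from perfectness of $L_\cX$ (since $\cX$ is smooth, of amplitude $[0,1]$) and of $L_Y$ (since $Y$ is lci, of amplitude $[-1,0]$, represented locally by $[\cI/\cI^2 \to \Omega^1_W|_Y]$ for local regular embeddings $Y \hookrightarrow W$ into smooth schemes), as $L_{\cX/Y}$ is then the cone of a map of perfect complexes.

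For the vanishing $\het^{(i)}_{L_{\cX/Y}} = 0$ at $i \notin \{-1,0,1\}$, fix an arc $\varphi \in |\sL(\cX)| \setminus |\sL(\cX \setminus \cU)|$ and set $\psi = \pi \circ \varphi$. Since the composition $\cU \hookrightarrow \cX \to Y$ is an open immersion with $\cU$ open in the smooth $\cX$, the image is an open smooth subscheme of $Y$, so $\psi$ sends the generic point of $\Spec k'[[t]]$ into $Y^{\mathrm{sm}}$. Using the locally free representation of $L_Y$, the complex $L\psi^* L_Y$ is realized as $[\psi^*(\cI/\cI^2) \to \psi^*\Omega^1_W|_Y]$ of free $k'[[t]]$-modules whose differential is generically injective, hence injective outright; this will give $H^i(L\psi^* L_Y) = 0$ for $i \neq 0$ and $H^0(L\psi^* L_Y) = \psi^*\Omega^1_Y$. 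Combined with $L\varphi^* L_\cX$ having cohomology only in degrees $0$ and $1$, the long exact sequence of the pulled-back triangle will force $H^i(L\varphi^* L_{\cX/Y}) = 0$ for $i \notin \{-1,0,1\}$.

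The identification $\het^{(-1)}_{L_{\cX/Y}} = \ord_{\cJ_Y} \circ \sL(\pi)$ will come from the same long exact sequence, which, using $H^{-1}(L\varphi^* L_\cX) = 0$, gives
\[
H^{-1}(L\varphi^* L_{\cX/Y}) \simeq \ker\bigl(\psi^*\Omega^1_Y \to H^0(L\varphi^* L_\cX)\bigr).
\]
Representing $L_\cX$ by a two-term locally free complex in degrees $[0,1]$ exhibits $H^0(L\varphi^* L_\cX)$ as a submodule of a free $k'[[t]]$-module, hence as torsion-free; since the map above is an isomorphism at the generic point, its kernel equals the torsion submodule of $\psi^*\Omega^1_Y$. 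The standard identity $\dim_{k'} M_{\mathrm{tors}} = \ord_t \Fitt_d(M)$ for finitely generated $k'[[t]]$-modules $M$ of generic rank $d = \dim Y$, combined with pullback compatibility of Fitting ideals and the lci identity $\cJ_Y = \Fitt_d(\Omega^1_Y)$ (verified by matching $c \times c$ Jacobian minors against the image of the canonical map $\Omega^d_Y \to \omega_Y$), will complete the computation. The main technical subtlety is justifying torsion-freeness of $H^0(L\varphy^* L_\cX)$, which relies crucially on the $[0,1]$ amplitude of $L_\cX$ for smooth Artin stacks.
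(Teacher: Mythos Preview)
Your proposal is correct and follows essentially the same route as the paper: both use the triangle $L\pi^*L_Y\to L_\cX\to L_{\cX/Y}$, the two-term locally free model of $L_Y$ coming from a local regular embedding, torsion-freeness of $H^0(L\varphi^*L_\cX)$, and the Fitting-ideal identity $\ord_t\Fitt_d(\psi^*\Omega^1_Y)=\dim_{k'}(\psi^*\Omega^1_Y)_{\tors}$ together with $\cJ_Y=\Fitt_d(\Omega^1_Y)$. The only difference is that the paper outsources the identifications $L^{-1}\varphi^*L_{\cX/Y}\cong(\psi^*\Omega^1_Y)_{\tors}$ and the torsionness of $L^{-2}\varphi^*L_{\cX/Y}$ to \cite[Lemma~5.3]{SatrianoUsatine2}, whereas you derive them directly from the long exact sequence; your explicit restriction to arcs with generic point in $\cU$ simply makes visible the hypothesis under which that cited lemma applies, and is exactly what is used downstream in Section~\ref{sectionLciCase}.
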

\begin{proof}
Fix an arc $\varphi\colon D=\Spec R\to\cX$ with $R=k'[[t]]$ and let $\psi=\pi\circ\varphi$. By \cite[Lemma 5.3]{SatrianoUsatine2}, $L^{-1}\varphi^*L_{\cX/Y}=(\psi^*\Omega^1_Y)_{\tors}$. So, $\het^{(-1)}_{L_{\cX/Y}}(\varphi)=\dim_{k'} (\psi^*\Omega^1_Y)_{\tors}$. For any finitely generated $R$-module $M$ of rank $d$, it is easy to check that $\ord_t\Fitt_d(M)=\dim_{k'}(M_{\tors})$, so we see 
\[
\het^{(-1)}_{L_{\cX/Y}}(\varphi)=\ord_{\Fitt_d(\Omega^1_Y)}(\psi)=\ord_{\cJ_Y}(\psi),
\]
where the last equality comes from \cite[Remark 9.6]{EinMustata}.

Next, we have an exact triangle
\[
L\pi^*L_Y\to L_\cX\to L_{\cX/Y}.
\]
Since $Y$ is an lci, $L_Y$ is perfect and supported in degrees $[-1,0]$. Since $\cX$ is smooth, $L_\cX$ is perfect, supported in degrees $[0,1]$. It follows that $L_{\cX/Y}$ is perfect supported in degrees $[-2,1]$.

To finish the proof, it remains to prove $\het^{(-2)}_{L_{\cX/Y}}=0$. 
By \cite[Lemma 5.3]{SatrianoUsatine2}, we know $L^{-2}\varphi^*L_{\cX/Y}$ is torsion, so it is enough to prove that it is also free. For this, we observe that by the exact triangle above, $L^{-2}\varphi^*L_{\cX/Y}=L^{-1}\psi^*L_Y$. Now since $Y$ is lci, after possibly shrinking $Y$, we may assume there is a regular closed immersion $i\colon Y\to Z$ defined by an ideal sheaf $\cI$ where $Z$ is smooth over $k$. Then there is an exact triangle
\[
\cI/\cI^2\to i^*\Omega^1_Z\to L_Y
\]
from which we have an exact sequence
\[
0\to L^{-1}\psi^*L_Y\to \psi^*(\cI/\cI^2)\to \psi^*i^*\Omega^1_Z.
\]
Since $\cI/\cI^2$ is locally free, $\psi^*(\cI/\cI^2)$ is a free $R$-module. It follows that $L^{-1}\psi^*L_Y$ is torsion-free, and hence also a free $R$-module.
\end{proof}

Next, we prove the following general result which relates the height functions of a generically exact perfect complex to the order function of its associated Cartier divisor.

\begin{proposition}\label{prop:alternating-sum-hts-ord}
Let $\cY$ be an irreducible Artin stack over $k$ and let $\jmath\colon\cV\hookrightarrow\cY$ be an open dense subscheme. Suppose $\cE\in D^b_{\coh}(\cY)$ is perfect and $\jmath^*\cE\simeq0$. Then we obtain canonical injections $\iota\colon\det(\cE)\hookrightarrow \jmath_*\det(\jmath^*\cE)\xrightarrow{\simeq} \jmath_*\cO_{\cV}$, and hence an associated Cartier divisor that we denote by $\Div(\cE)$, see Proposition \ref{prop:rat-sec-->Cartier}.

Suppose $\varphi\colon\Spec k'[[t]]\to\cY$ is an arc, where $k'/k$ is a field extension, and that the generic point of $\varphi$ maps to $\cV$. Then
\[
\ord_{\Div(\cE)}(\varphi)=\sum_{i\in\bZ}(-1)^i\het^{(i)}_\cE(\varphi).
\]
\end{proposition}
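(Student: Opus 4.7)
The plan is to reduce the global statement to a local computation over the DVR $R=k'[[t]]$, and then evaluate using the Knudsen--Mumford determinant formula for a generically acyclic bounded complex of finite free $R$-modules.

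Since $\cE$ is perfect, I would choose a representative bounded complex $F^{\bullet}$ of finite locally free $\cO_\cY$-modules. Pulling back yields $P^{\bullet}:=\varphi^*F^{\bullet}$, a bounded complex of finite free $R$-modules representing $L\varphi^*\cE$, so that $\het^{(i)}_\cE(\varphi)=\dim_{k'}H^i(P^{\bullet})$. The hypothesis that the generic point of $\varphi$ lies in $\cV$, combined with $\jmath^*\cE\simeq 0$, forces $P^{\bullet}\otimes_R K$ to be acyclic, where $K=k'((t))$; in particular each $H^i(P^{\bullet})$ is a finite-length torsion $R$-module and $\dim_{k'}H^i(P^{\bullet})=\len_R H^i(P^{\bullet})$. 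Because $\iota$ is the canonical extension across $\jmath$ of the Knudsen--Mumford trivialization of $\det\cE|_\cV\simeq\cO_\cV$ coming from acyclicity of $\jmath^*\cE$, its pullback along $\varphi$ agrees with the embedding $\det P^{\bullet}\hookrightarrow K$ obtained from the analogous canonical trivialization of $\det(P^{\bullet}\otimes_R K)$.

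The core computation is the Knudsen--Mumford identity
\[
\det(P^{\bullet}) \;\cong\; \bigotimes_i \det\bigl(H^i(P^{\bullet})\bigr)^{(-1)^i}
\]
as fractional ideals of $K$. For a cyclic torsion module $R/(t^a)$, taking the two-term free resolution $[R\xrightarrow{t^a}R]$ and unwinding the canonical acyclic-complex trivialization identifies $\det(R/(t^a))$ with the fractional ideal $(t^{-a})$; by additivity in short exact sequences this extends to $\det(M)=(t^{-\len_R M})$ for any finite-length $R$-module $M$. Consequently
\[
\det(P^{\bullet}) \;=\; \bigl(t^{-\chi}\bigr) \quad\text{where}\quad \chi := \sum_i (-1)^i \len_R H^i(P^{\bullet}).
\]
Finally, by the convention of Proposition~\ref{prop:rat-sec-->Cartier} (the same convention used to define $mK_{\cX/Y}$ in Definition~\ref{def:rel-canonical-divisor}), if $\det\cE\hookrightarrow\cK_\cY$ sends a local generator to an element of $t$-adic valuation $N$ along $\varphi$, then $\ord_{\Div(\cE)}(\varphi)=-N$. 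Applied to our computation $N=-\chi$, this gives
\[
\ord_{\Div(\cE)}(\varphi) \;=\; \chi \;=\; \sum_i (-1)^i \het^{(i)}_\cE(\varphi),
\]
as desired.

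I expect the main obstacle to be the careful bookkeeping of signs arising from the interplay of (a) the direction of the canonical trivialization in the definition of $\Div(\cE)$, (b) the sign $(-1)^i$ in the definition of the determinant of a complex, and (c) the identification $\det(R/(t^a))=(t^{-a})$. Once the two-term base case $[R^n\xrightarrow{d}R^n]$ with $d$ in Smith normal form $\diag(t^{a_1},\ldots,t^{a_n})$ is verified directly, the general case follows from the Knudsen--Mumford formula above or, equivalently, by induction on the number of nonzero terms of $P^{\bullet}$ using multiplicativity of determinants and additivity of Euler characteristics across distinguished triangles.
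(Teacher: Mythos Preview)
Your approach is correct and essentially the same as the paper's: both reduce to a bounded complex of finite free $R$-modules with torsion cohomology and then identify $\ord_{\Div(\cE)}(\varphi)$ with the alternating sum of lengths of the cohomology modules. The paper packages this last step as a single citation to \cite[Appendix~A, Theorem~30]{GelfandKapranovZelevinsky}, whereas you unpack it via the Knudsen--Mumford identity and the cyclic base case $\det(R/(t^a))=(t^{-a})$; these are the same computation. One small fix: a perfect object on an Artin stack need not admit a \emph{global} bounded resolution by finite locally free sheaves, so you should not begin by choosing $F^\bullet$ on $\cY$; instead, as the paper does, take a finite free representative of $L\varphi^*\cE$ directly over the local ring $R$, where such a representative always exists.
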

\begin{proof}
Since $\det(\cE)$ is a line bundle, we have a canonical injection $\det(\cE)\hookrightarrow \jmath_*\jmath^*\det(\cE)\xrightarrow{\simeq}\jmath_*\det(\jmath^*\cE)$. Since $\jmath^*\cE\simeq0$, the map from $\cE$ to the zero complex yields a canonical isomorphism $\det(\jmath^*\cE)\xrightarrow{\simeq} \det(0)=\cO_{\cV}$.

Let $R=k'[[t]]$ and $K=k'((t))$. To compute $\ord_{\Div(\cE)}(\varphi)$, we must choose an $R$-basis for $\varphi^*\det(\cE)=\det(L\varphi^*\cE)$ and compute the image of this basis under the map $\varphi^*(\iota)\colon\varphi^*\det(\cE)=L\varphi^*\det(\cE)\to L\varphi^*\jmath_*\cO_\cV\xrightarrow{\simeq} K$. The object $L\varphi^*\cE$ is represented by a finite complex $M^\bullet$ of free $R$-modules whose cohomology groups are torsion. Choosing a basis for each $M^i$, we obtain a based complex in the terminology of \cite[Appendix A]{GelfandKapranovZelevinsky}. Since $H^i(M^\bullet)=L^i\varphi^*\cE$, by Theorem 30 of (loc.~cit.),
\[
\ord_{\Div(\cE)}(\varphi)=\sum_{i\in\bZ}(-1)^i\dim_{k'}H^i(M^\bullet)=\sum_{i\in\bZ}(-1)^i\het^{(i)}_\cE(\varphi).\qedhere
\]
\end{proof}

We now turn to Theorem \ref{thm:main-lci-case}.

\begin{proof}[{Proof of Theorem \ref{thm:main-lci-case}}]
Propositions \ref{prop:Ylci->perfect-ht-2-vanish} and \ref{prop:alternating-sum-hts-ord} tell us
\begin{equation}\label{eqn:lci-alternating-sum-ord}
\ord_{\Div(L_{\cX/Y})}=\sum_{i\in\Z} (-1)^i\het^{(i)}_{L_{\cX/Y}}=\het^{(0)}_{L_{\cX/Y}} - \het^{(1)}_{L_{\cX/Y}} - \ord_{\sJ_Y}.
\end{equation}
This order function agrees with that of $K_{\cX/Y}$ by Theorem \ref{thm:main-canonical-divisor}; note that since $Y$ is lci, it is Gorenstein and hence $m=1$. It follows from Proposition \ref{prop:order-fnc-determines-div} that $\Div(L_{\cX/Y})=K_{\cX/Y}$.
\end{proof}

\begin{remark}
Section \ref{sectionLciCase} provides a short and completely self-contained proof of the motivic change of variables formula in the lci case if one replaces $mK_{\cX/Y}$ by $\Div(L_{\cX/Y})$ in the statement of Theorem \ref{thm:main-mcvf}. This is given by combining Theorem \ref{theoremChangeOfVariables-reformulation} with the proof of \eqref{eqn:lci-alternating-sum-ord}.
\end{remark}

\section{Examples}
\label{sec:examples}

In the following example, we construct a quotient variety that admits a crepant resolution by a smooth Artin stack, but \emph{does not} admit a crepant resolution by a smooth Deligne--Mumford stack and does not admit any non-commutative crepant resolution (NCCR). 
Thus, our motivic change of variables formula (Theorem \ref{thm:main-mcvf}) applies to a broader class of varieties than has been considered by previous techniques.

\begin{example}[{No NCCR or DM crepant resolution}]
\label{ex:noDM-or-NCCR-res}
Let $Y$ be the affine cone over the Grassmannian $\Gr(2,r)$ with respect to the Pl\"ucker embedding. The variety $Y$ is 
log-terminal, see e.g, \cite[Example 5.1]{Batyrev1998}.

Let $\cX=[\bA^{2r}/\SL_2]$ where $\bA^{2r}$ is viewed as the space of $2\times r$ matrices and $\SL_2$ acts by left multiplication. Then, by \cite[Remark 10.1]{SatrianoUsatine} and the preceding discussion, there is a good moduli space $\pi\colon\cX\to Y$, and $\pi$ is a small resolution for $r\geq3$.

On the other hand, $Y$ admits no crepant resolution by a smooth Deligne--Mumford stack by the second paragraph of \cite[Remark 5.9]{vanDenBergh22} and admits no NCCR by \cite[Example 10.2]{SpenkoVanDenBergh17}.
\end{example}

The next example illustrates that, unlike the case of schemes, when $Y$ is smooth $K_{\cX/Y}$ need not be effective.

\begin{example}[{Smooth $Y$ with anti-effective $K_{\cX/Y}$}]
\label{ex:KcXY-not-effective}
Let $Y=\bA^1$ and let $\cX=[\bA^4/\SL_2]$ where $\bA^4$ is identified with the space of $2\times2$ matrices and $\SL_2$ acts by left multiplication. Then we have a good moduli space map $\pi\colon\cX\to Y$ where a matrix $A\in\bA^4$ maps to $\det(A)\in\bA^1$. The exceptional locus $\cD$ of $\pi$ is given by $[Z/\SL_2]$ where $Z\subset\bA^4$ consists of all $2\times 2$ matrices with vanishing determinant. 
We know $K_{\cX/Y}=n\cD$ for some integer $n$.

We show $n=-1$ by applying our motivic change of variables formula Theorem \ref{thm:main-mcvf}. Following the notation of \cite[\S10]{SatrianoUsatine}, we consider the cylinders $\cC:=\cC^{(1,2)}\subset |\sL(\cX)|$ and $C:=C^{(2)}\subset \sL(Y)$. The latter cylinder consists of arcs vanishing to order at least $2$ at the origin and hence has volume
\[
\mu^{\Gor}_Y(C)=\mu_Y(C)=(\bL-1)\bL^{-3};
\]
the volume and Gorenstein volume coincide here since $Y$ is smooth. The former cylinder consists of arcs corresponding to the $\SL_2$-orbit of diagonal matrices of the form $\diag(ft,t)$ with $f\in k[[t]]^*$; its volume is computed in \cite[Proposition 10.10]{SatrianoUsatine}:
\[
\mu_{\cX}(\cC)=(\bL-1)\bL^{-5}.
\]
It is immediate from the definition that $\ord_\cD = 2$ on $\cC$ and that the generic point of every arc in $\cC$ maps to $\cX\setminus\cD$. Furthermore, the proof of \cite[Proposition 10.11]{SatrianoUsatine} shows that for all field extensions $k'/k$, the map $\overline{\cC}(k')\to C(k')$ is bijective. Thus, the hypotheses of Theorem \ref{thm:main-mcvf} are satisfied. We therefore have
\begin{align*}
(\bL-1)\bL^{-3}=\mu^{\Gor}_Y(C)&=\int_{\cC}\bL^{-\ord_{K_{\cX/Y}}}d\mu_\cX\\
&=\bL^{-2n}\mu_\cX(\cC)=(\bL-1)\bL^{-2n-5}.
\end{align*}
It follows that
\[
K_{\cX/Y}=-\cD
\]
and hence, $-K_{\cX/Y}$ is effective.

Such phenomena never occur when $\cX$ is a scheme. Indeed the correction factor in Theorem \ref{thm:main-mcvf} relating $\mu_Y=\mu^{\Gor}_Y$ and $\mu_\cX$ arises from the dimensions of fibers of the jet space maps $\sL_n(\cX)\to\sL(Y)$. For schemes these dimensions are non-negative and so $K_{\cX/Y}$ is effective; however for stacks, these fibers can be negative-dimensional. We note that an upgraded version of this example is essential to our proof of Theorem \ref{thm:crepant-log-terminal} in Section \ref{sec:crepant-log-terminal}.
\end{example}

The next example illustrates that, even when one restricts attention to varieties, it is important to define crepantness of $\pi$ using triviality of $mK_{\cX/Y}$ as opposed to triviality of $\omega_{\cX/Y}$.

\begin{example}[{Non-crepant $\pi$ with trivial $\omega_{\cX/Y}$}]
\label{ex:crepant-distinction}
Let $X'\to Y$ be any birational map of varieties where $Y$ is $\QQ$-Gorenstein and $K_{X'/Y}$ is a non-zero Cartier divisor, e.g., $X'$ could be the blow up of a smooth variety $Y$ at a smooth center. Since $K_{X'/Y}$ is locally principal, we may take $X\subset X'$ to be an open subset that intersects the support of $K_{X'/Y}$ and where $K_{X'/Y}$ is principal. By construction, $\pi\colon X\to Y$ is a weakly birational map where $\omega_{X/Y} \simeq \cO_X$ but $K_{X/Y}\neq0$.
\end{example}

\section{Crepant stacky resolutions of log-terminal singularities}
\label{sec:crepant-log-terminal}

In this section, we prove Theorem \ref{thm:crepant-log-terminal}. The proof makes crucial use of the following stack which parameterizing degenerations of framed bundles. Recall that $[\bA^1/\bG_m]$ classifies line bundles with section.

\begin{lemma}\label{l:moduli-framed-bundles}
Viewing $\bA^{r^2}$ as the variety of $r\times r$ matrices, consider the stack $\sM_r:=[\bA^{r^2}/\GL_r]$. Then the following hold.
\begin{enumerate}
\item\label{l:moduli-framed-bundles::moduli-interp} For any scheme $T$, the groupoid $\sM_r(T)$ consists of pairs $(\cE,\alpha)$ where $\cE$ is a rank $r$ vector bundle $\alpha\colon\cO^{\oplus r}\to\cE$ is a morphism. Maps of pairs are isomorphisms of vector bundles which respect the maps from $\cO^{\oplus r}$.

\item\label{l:moduli-framed-bundles::birational} Let
\[
\det\colon\sM_r\to[\bA^1/\bG_m]
\]
be the map sending $(\cE,\alpha)$ to the line bundle $\det(\cE)$ equipped with the global section $\det(\alpha)\colon\cO\to\det(\cE)$. Then $\det$ is a relative good moduli space map which is an isomorphism over the dense open point $\Spec k=[\bG_m/\bG_m]\to[\bA^1/\bG_m]$.

\item\label{l:moduli-framed-bundles::pullback} We have a cartesian diagram
\[
\xymatrix{
\sS_r\ar[r]\ar[d] & \sM_r\ar[d]^-{\det}\\
\bA^1\ar[r] & [\bA^1/\bG_m]
}
\]
where $\sS_r:=[\bA^{r^2}/\SL_r]$.
\end{enumerate}
\end{lemma}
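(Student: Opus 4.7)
My plan is to treat part \eqref{l:moduli-framed-bundles::moduli-interp} first and then deduce parts \eqref{l:moduli-framed-bundles::birational} and \eqref{l:moduli-framed-bundles::pullback} from the resulting moduli interpretation. Part \eqref{l:moduli-framed-bundles::moduli-interp} is an instance of the standard description of a linear quotient stack $[V/G]$ as classifying pairs consisting of a $G$-torsor $P$ together with a $G$-equivariant morphism $P\to V$. Here the frame bundle construction identifies $\GL_r$-torsors on $T$ with rank $r$ vector bundles $\cE$ on $T$, and viewing $\bA^{r^2}=\Hom(k^r,k^r)$ as $r$ copies of the standard representation of $\GL_r$, a $\GL_r$-equivariant map from the frame bundle of $\cE$ to $\bA^{r^2}$ is exactly the data of a morphism $\alpha\colon\cO^{\oplus r}\to\cE$.

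For the cartesian square in part \eqref{l:moduli-framed-bundles::pullback}, I would verify the pullback directly via the moduli interpretations. A $T$-point of the fiber product is a pair $(\cE,\alpha)\in\sM_r(T)$ together with a lift of $(\det\cE,\det\alpha)\in[\bA^1/\bG_m](T)$ to a $T$-point of $\bA^1$; such a lift is the same as an isomorphism $\tau\colon\det\cE\xrightarrow{\sim}\cO_T$. Giving $\tau$ is in turn equivalent to reducing the structure group of $\cE$ from $\GL_r$ to $\SL_r=\ker(\det)$, and under the analogue of part \eqref{l:moduli-framed-bundles::moduli-interp} for $\SL_r$, the resulting data $(\cE_{\SL_r},\alpha)$ is exactly a $T$-point of $\sS_r=[\bA^{r^2}/\SL_r]$.

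Finally, I would address part \eqref{l:moduli-framed-bundles::birational} in three steps. The moduli description $(\cE,\alpha)\mapsto(\det\cE,\det\alpha)$ is immediate from the fact that $\det$ is induced at the level of presentations by the $\GL_r$-equivariant morphism $\bA^{r^2}\xrightarrow{\det}\bA^1$ together with the character $\det\colon\GL_r\to\bG_m$. The isomorphism over the open point is also direct: pulling back along $[\bG_m/\bG_m]=\Spec k\hookrightarrow[\bA^1/\bG_m]$ cuts out the open substack of $\sM_r$ where the determinant is invertible, namely $[\GL_r/\GL_r]\cong\Spec k$. The substantive step is the relative good moduli space claim, for which I would invoke smooth descent: by part \eqref{l:moduli-framed-bundles::pullback}, the base change of $\det$ along the smooth surjective atlas $\bA^1\to[\bA^1/\bG_m]$ is the map $\sS_r\to\bA^1$, which is a good moduli space morphism since $\SL_r$ is reductive and the classical invariant theory calculation gives $k[\bA^{r^2}]^{\SL_r}=k[\det]$. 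The main obstacle I expect is articulating the precise notion of ``relative good moduli space'' being used and citing the version of Alper's descent that permits smooth-local verification on the base; once that is in place, the remainder of the argument is a routine unwinding of the quotient-stack functors of points.
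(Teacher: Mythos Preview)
Your proposal is correct and follows essentially the same route as the paper: part~(\ref{l:moduli-framed-bundles::moduli-interp}) via the standard torsor-plus-equivariant-map description of $[V/G]$, part~(\ref{l:moduli-framed-bundles::pullback}) via the moduli interpretation identifying a lift to $\bA^1$ with a trivialization of $\det\cE$, and part~(\ref{l:moduli-framed-bundles::birational}) by first checking $\sS_r\to\bA^1$ is a good moduli space via $\SL_r$-invariant theory and then using the cartesian square plus smooth descent. The only cosmetic difference is that the paper verifies the isomorphism over $[\bG_m/\bG_m]$ by a Nakayama argument on $(\cE,\alpha)$ rather than your direct identification of the preimage as $[\GL_r/\GL_r]\cong\Spec k$; both are equally valid.
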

\begin{proof}
For (\ref{l:moduli-framed-bundles::moduli-interp}), if $\cF$ denotes the universal rank $r$ vector bundle, then we must show $\sM_r$ is the total space of $\cF^{\oplus r}$. The bundle $\cF$ corresponds to the universal $\GL_r$-torsor $\Spec k\to B\GL_r$, and so $\cF^{\oplus r}$ is given by the pushout $\Spec k\times_{\GL_r}\bA^{r^2}=[\bA^{r^2}/\GL_r]=\sM_r$.

We next prove (\ref{l:moduli-framed-bundles::pullback}). For any scheme $T$, the map $\bA^1\to[\bA^1/\bG_m]$ sends $f\in\bA^1(T)$ to $(\cO_T,f)$. So, the fiber product classifies tuples $(\cE,\alpha,\iota)$ where $(\cE,\alpha)$ is as in (\ref{l:moduli-framed-bundles::moduli-interp}) and $\iota\colon\det(\cE)\xrightarrow{\simeq}\cO$ is an isomorphism. Thus, the automorphisms of $(\cE,\alpha,\iota)$ are precisely the automorphisms of $(\cE,\alpha)$ with trivial determinant.

Lastly, we turn to (\ref{l:moduli-framed-bundles::birational}). First note that $\sS_r\to\bA^1$ is a good moduli space map since the determinant is the unique invariant for $\SL_r$ acting on $r\times r$ matrices. Since $\bA^1\to[\bA^1/\bG_m]$ is a smooth cover, it follows from (\ref{l:moduli-framed-bundles::pullback}) that $\det$ is a relative good moduli space map. To finish the proof of (\ref{l:moduli-framed-bundles::birational}), fix an object $(\cE,\alpha)$ of $\sM_r(T)$, where $T$ is a scheme. We must show that if the section $\det(\alpha)\colon\cO\to\det(\cE)$ is nowhere vanishing, then $(\cE,\alpha)$ is uniquely isomorphic to $(\cO^{\oplus r},\id)$, where $\id$ is the identity map. By a standard limit argument, we may reduce to the case where $T$ is Noetherian. Since $\det(\alpha)$ is nowhere vanishing, $\alpha$ is a map of rank $r$ vector bundles which is an isomorphism on all fibers. Then by Nakayama's Lemma, $\alpha$ is an isomorphism. Thus, $\alpha\colon\cO^{\oplus r}\to\cE$ defines an isomorphism $(\cO^{\oplus r},\id)\xrightarrow{\simeq} (\cE,\alpha)$. Moreover, this is the unique such isomorphism.
\end{proof}

Our first goal is to compute $K_{\sM_r/[\bA^1/\bG_m]}$ by generalizing Example \ref{ex:KcXY-not-effective}. Note that if $\pi\colon\cX\to\cY$ is any morphism of smooth irreducible finite type Artin stacks and if $\cU\subset\cX$ is a non-empty open \emph{subscheme} such that $\cU\to\cY$ is an open immersion, then $\omega_{\cX/\cY}:=\det(L_\cX)\otimes\pi^*\det(L_{\cY})$ is canonically trivialized over $\cU$ hence defines a Cartier divisor $K_{\cX/\cY}$ by Proposition \ref{prop:rat-sec-->Cartier}.

We next understand the behaviour of the relative canonical divisor under pullbacks, products, and compositions. 

\begin{lemma}\label{l:pullback-K-lci}
The following statements hold.
\begin{enumerate}
\item\label{l:pullback-K-lci::pullback} Let
\[
\xymatrix{
\cX'\ar[r]^-{\Psi}\ar[d] & \cX\ar[d]\\
\cY'\ar[r] & \cY
}
\]
be a cartesian diagram of smooth finite type Artin stacks. Suppose $\cU\subset\cX$ is a dense open subscheme with $\cU\to\cY$ an open immersion, and suppose $\cU\times_\cX\cX'$ contains a dense open subscheme $\cU'$. Then
\[
\Psi^*K_{\cX/\cY}=K_{\cX'/\cY'}.
\]

\item\label{l:pullback-K-lci::div} Let $\cX\to\cY$ be a map of smooth finite type Artin stack and suppose $\cU\subset\cX$ is a dense open subscheme with $\cU\to\cY$ an open immerison. Then the Cartier divisor $\Div(L_{\cX/\cY})$ exists by Proposition \ref{prop:alternating-sum-hts-ord} and agrees with $K_{\cX/\cY}$.

\item\label{l:pullback-K-lci::product} For $1\leq i\leq n$, let $\cX_i\to \cX'_i$ be maps between smooth finite type Artin stacks and suppose $\cU_i\subset\cX_i$ is a dense open subscheme with $\cU_i\to\cX'_i$ an open immersion. 
Let $\cX=\prod_{i=1}^n\cX_i$, let $\cX'=\prod_{i=1}^n \cX'_i$, and let $p_i\colon\cX\to \cX_i$ denote the projection map. Then $\cU:=\prod_{i=1}^n\cU_i$ is a dense open subscheme of $\cX$ and $\cU\to \cX'$ is an open immersion. Furthermore,
\[
K_{\cX/\cX'}=\sum_{i=1}^n p_i^*K_{\cX_i/\cX'_i}.
\]

\item\label{l:pullback-K-lci::composition} Let $\cX\xrightarrow{\pi}\cY\xrightarrow{p}\cZ$ be morphisms between finite type Artin stacks with $\cX$ and $\cY$ smooth. Let $\cU\subset\cX$ and $\cV\subset\cY$ be dense open subschemes such that $\cU\to\cY$ and $\cV\to\cZ$ are open immersions. Suppose either that $\cZ$ is an $m$-Gorenstein scheme, or that $\cZ$ is smooth (in which case we take $m=1$). Then 
\[
mK_{\cX/\cZ}=mK_{\cX/\cY}+\pi^*(mK_{\cY/\cZ}).
\]
\end{enumerate}
\end{lemma}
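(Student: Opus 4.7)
The plan is to establish (2) first, which identifies $K_{\cX/\cY}$ with the divisor $\Div(L_{\cX/\cY})$ of Proposition \ref{prop:alternating-sum-hts-ord}. The remaining parts then follow from formal manipulations of determinants of cotangent complexes, together with compatibility of canonical trivializations.

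For (2), I would take the exact triangle $\pi^*L_\cY \to L_\cX \to L_{\cX/\cY}$ in $D(\cX)$. Smoothness of $\cX$ and $\cY$ ensures all three complexes are perfect, so passing to determinants yields a canonical isomorphism $\det(L_{\cX/\cY}) \simeq \omega_\cX \otimes \pi^*\omega_\cY^\vee = \omega_{\cX/\cY}$. Restricting to $\cU$ and using that both $\cU \hookrightarrow \cX$ and $\cU \hookrightarrow \cY$ are open immersions, we get $L_{\cX/\cY}|_\cU \simeq L_{\cU/\cY} = 0$, so the canonical trivialization of $\det(L_{\cX/\cY})$ over $\cU$ supplied by Proposition \ref{prop:alternating-sum-hts-ord} matches the tautological trivialization of $\omega_{\cX/\cY}$ used in Definition \ref{def:rel-canonical-divisor}. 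Hence $\Div(L_{\cX/\cY}) = K_{\cX/\cY}$.

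For (1), smoothness of all four stacks in the cartesian square makes the square tor-independent by a dimension-transversality argument (the underived pullback is smooth of the expected dimension). The base-change formula for the relative cotangent complex then gives $L_{\cX'/\cY'} \simeq L\Psi^*L_{\cX/\cY}$; taking determinants and invoking (2) produces $\omega_{\cX'/\cY'} \simeq \Psi^*\omega_{\cX/\cY}$. Since $\Psi$ carries $\cU'$ into $\cU$, the canonical trivializations over $\cU'$ are compatible, yielding $\Psi^*K_{\cX/\cY} = K_{\cX'/\cY'}$.

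For (3), the K\"unneth decomposition $L_{\prod_i\cX_i} = \bigoplus_i p_i^*L_{\cX_i}$ (and its analogue for $\cX'$) passes to determinants to give $\omega_{\cX/\cX'} = \bigotimes_i p_i^*\omega_{\cX_i/\cX'_i}$, with trivializations over $\cU = \prod\cU_i$ arising as tensor products of the trivializations on each factor. For (4), smoothness of $\cY$ gives $\omega_{\cY,m} = \omega_\cY^{\otimes m}$, and by Definition \ref{def:rel-canonical-divisor},
\[
\omega_\cX^{\otimes m}\otimes(p\pi)^*\omega_{\cZ,m}^\vee \;\simeq\; \bigl(\omega_\cX^{\otimes m}\otimes\pi^*\omega_\cY^{\otimes m,\vee}\bigr)\otimes\pi^*\bigl(\omega_\cY^{\otimes m}\otimes p^*\omega_{\cZ,m}^\vee\bigr),
\]
with the $\pi^*\omega_\cY^{\otimes m}$ factors cancelling; compatibility of trivializations over $\cU$ is built into the construction. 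I expect the tor-independence step in (1) to be the main technical point, but once that is granted the rest is routine bookkeeping in the category of perfect complexes.
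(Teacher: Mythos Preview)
Your approach is correct, and in one key respect more direct than the paper's. The structural difference is that you prove (2) first and then use it for (1), whereas the paper does the reverse: it establishes (1) by directly asserting $\Psi^*\omega_{\cX/\cY}=\omega_{\cX'/\cY'}$ and checking trivializations, and then proves (2) by pulling back along a smooth cover $Y'\to\cY$ by a scheme, applying (1), and invoking Theorem~\ref{thm:main-lci-case} (the lci case) together with Proposition~\ref{prop:order-fnc-determines-div} to identify $K_{\cX'/Y'}$ with $\Div(L_{\cX'/Y'})$. Your route to (2) --- taking determinants of the triangle $\pi^*L_\cY\to L_\cX\to L_{\cX/\cY}$ and checking that over $\cU$ this restricts to the tautological trivialization of $\omega_\cU\otimes\omega_\cU^\vee$ --- bypasses the entire Fitting-ideal machinery underlying Theorem~\ref{thm:main-lci-case} and is genuinely more elementary. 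What it costs you is having to verify the Knudsen--Mumford compatibility: that for the triangle $E\xrightarrow{\id}E\to 0$ the determinant isomorphism $\det(E)\simeq\det(E)\otimes\det(0)$ is the canonical one. This is standard, but worth stating.

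On (1): your tor-independence claim is correct, but the ``dimension-transversality'' deserves one more sentence. The hypotheses on $\cU$ and $\cU'$ force $\dim\cX=\dim\cY$ and $\dim\cX'=\dim\cY'$, so $\cX'$ has the expected dimension $\dim\cX+\dim\cY'-\dim\cY$; since $\cX\times_k\cY'$ is smooth and the diagonal $\cY\hookrightarrow\cY\times_k\cY$ is a regular immersion, the pullback $\cX'\hookrightarrow\cX\times_k\cY'$ is then a regular immersion of the correct codimension, which gives the Tor-vanishing. The paper does not spell this out either --- it simply writes ``We see $\Psi^*\omega_{\cX/\cY}=\omega_{\cX'/\cY'}$'' --- and in its applications the horizontal map $\cY'\to\cY$ is always smooth, so flatness is available. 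Parts (3) and (4) are handled essentially the same way in both arguments.
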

\begin{proof}
We start by showing (\ref{l:pullback-K-lci::pullback}). Letting $\cV'=\cU\times_\cX\cX'$, we see $\cV'\subset\cX$ is an open substack with $\cV'\to\cY'$ an open immersion, and so $\cU'\to\cV'\to\cY'$ is an open immersion. We see $\Psi^*\omega_{\cX/\cY}=\omega_{\cX'/\cY'}$. The Cartier divisor $K_{\cX/\cY}$ is defined by the canonical trivialization of $\omega_{\cX/\cY}|_\cU$ which, since $\Psi(\cU')\subset\cU$, pulls back to the canonical trivialization of $\omega_{\cX'/\cY'}|_{\cU'}$. Therefore, $\Psi^*K_{\cX/\cY}=K_{\cX'/\cY'}$.

We now turn to (\ref{l:pullback-K-lci::div}). Since $\cX$ and $\cY$ are smooth, $L_{\cX/\cY}$ is perfect. Since $L_{\cX/\cY}|_\cU$ is canonically trivialized, Proposition \ref{prop:alternating-sum-hts-ord} yields a Cartier divisor $\Div(L_{\cX/\cY})$. Let $Y'=\cY'\to\cY$ be a smooth cover by a scheme, let $\cX':=\cX\times_\cY Y'$, and let $\cU'=\cU\times_\cX\cX'$. Then $\cU'\to Y'$ is an open immersion, hence a scheme. Let $\Psi\colon\cX'\to\cX$ be the induced map, which is smooth, so $\cX'$ is smooth. Then the canonical trivialization of $L_{\cX/\cY}|_\cU$ pulls back to the canonical trivialization of $L_{\cX'/Y'}|_{\cU'}$. Thus, $\Psi^*\Div(L_{\cX/\cY})=\Div(L_{\cX'/Y'})=K_{\cX'/Y'}$ where the last equality is by Theorem \ref{thm:main-lci-case}. On the other hand, applying (\ref{l:pullback-K-lci::pullback}), we find $\Psi^*K_{\cX/\cY}=K_{\cX'/Y'}=\Psi^*\Div(L_{\cX/\cY})$. Proposition \ref{prop:order-fnc-determines-div} then shows $K_{\cX/\cY}=\Div(L_{\cX/\cY})$.

To prove (\ref{l:pullback-K-lci::product}), by induction, it suffices to handle the case when $n=2$. Consider the cartesian diagram
\[
\xymatrix{
\cX\ar[r]^-{q}\ar[d]\ar@/_1.1pc/[dd]_{p_1} & \cY_2\ar[r]^-{q'}\ar[d] & \cX_2\ar[d]\\
\cY_1\ar[r]\ar[d] & \cX'\ar[r]\ar[d] & \cX'_2\ar[d]\\
\cX_1\ar[r] & \cX'_1\ar[r] & \Spec k
}
\]
Let $\cV_i:=\cU_i\times_{\cX_i}\cY_i$ and $\cW_i:=\cU_i\times_{\cX_i}\cX$. Then $\cU:=\cW_1\times_\cX\cW_2$. The map $\cU\to\cV_2$ is the pullback of $\cU_1\to \cX'_1$, hence an open immersion; the map $\cV_2\to \cX'$ is the pullback of $\cU_2\to \cX'_2$, hence also an open immersion. It follows that $\cU\to \cX'$ is an open immersion.

Next, we have an exact triangle
\[
Lq^*L_{\cY_2/\cX'}\to L_{\cX/\cX'}\to L_{\cX/\cY_2}.
\]
Using that $Lq^*L_{\cY_2/\cX'}=Lq^*L(q')^*L_{\cX_2/\cX'_2}=Lp_2^*L_{\cX_2/\cX'_2}$ and $L_{\cX/\cY_2}=Lp_1^*L_{\cX_1/\cX'_1}$, we find
\[
\det(L_{\cX/\cX'})\simeq p_1^*\det(L_{\cX_1/\cX'_1})\otimes p_2^*\det(L_{\cX_2/\cX'_2}).
\]
The Cartier divisor $\Div(L_{\cX_i/\cX'_i})$ is induced by the canonical trivialization of $L_{\cX_i/\cX'_i}|_{\cU_i}$, and $\Div(L_{\cX/\cX'})$ is induced by the canonical trivialization of $L_{\cX/\cX'}|_{\cU}$. Since $p_i(\cU)\subset\cU_i$, the above isomorphism of determinants induces an equality of Cartier divisors
\[
\Div(L_{\cX/\cX'})\simeq p_1^*\Div(L_{\cX_1/\cX'_1})\otimes p_2^*\Div(L_{\cX_2/\cX'_2})
\]
and hence, by (\ref{l:pullback-K-lci::div}), an equality
\[
K_{\cX/\cX'}=p_1^*K_{\cX_1/\cX'_1}+p_2^*K_{\cX_2/\cX'_2}.
\]

Lastly, we prove (\ref{l:pullback-K-lci::composition}). Replacing $\cU$ by $\cU\times_\cX\pi^{-1}(\cV)$, we may assume $\pi(\cU)\subset\cV$, and hence $\cU\to\cZ$ is an open immersion. Thus, $mK_{\cX/\cZ}$ is defined. We see
\begin{align*}
\omega_{\cX/\cY}^{\otimes m}&\otimes\pi^*\omega_{\cY/\cZ,m}\\
&=\det(L_\cX)^{\otimes m}\otimes\pi^*(\det(L_{\cY})^{\otimes m})^\vee \otimes \pi^*(\det(L_{\cY})^{\otimes m}\otimes p^*\omega_{\cY/\cZ,m}^\vee)=\omega_{\cX/\cZ,m}.
\end{align*}
Moreover, the open immersion $\cU\to\cV$ induces an isomorphism on generic points, so the canonical trivialization of $\omega_{\cX/\cZ,m}|_{\cU}$ is induced by the canonical trivializations of $\omega_{\cX/\cY}|_\cU$ and $\omega_{\cY/\cZ,m}|_{\cV}$. As a result, we obtain our desired equality of Cartier divisors.
\end{proof}

To compute $K_{\sM_r/[\bA^1/\bG_m]}$, we apply our motivic change of variables formula Theorem \ref{thm:main-mcvf} to specific choices of cylinders. The following technical lemma computes the appropriate measures of these cylinders.

\begin{lemma}\label{l:measures-cylinders-for-mcvf}
Let $C\subset\sL(\bA^1)$ be the cylinder of arcs vanishing to order $1$ at the origin. Let $\cC\subset|\sL(\sS_r)|$ be the set of arcs corresponding to the $\SL_r$-orbit of diagonal matrices of the form $\diag(f,1,\dots,1)$ where $f$ has valuation $1$. Then $\cC$ is a cylinder,
\[
\mu_{\sS_r}(\cC)=(\bL-1)\bL^{-(r+1)},\quad\textrm{and}\quad \mu^{\Gor}_{\bA^1}(C)=(\bL-1)\bL^{-2}.
\]
Furthermore, $\overline{\cC}(k')\to C(k')$ is bijective for all field extensions $k'/k$.
\end{lemma}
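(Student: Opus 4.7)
The plan is to verify each assertion in turn.

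The identity $\mu^{\Gor}_{\bA^1}(C)=(\bL-1)\bL^{-2}$ is immediate: since $\bA^1$ is smooth, $\mu^{\Gor}_{\bA^1}=\mu_{\bA^1}$, and under $\pi_1\colon\sL(\bA^1)\to\sL_1(\bA^1)\cong\bA^2$, one has $\pi_1(C)=\{(0,a):a\in\bG_m\}\cong\bG_m$ of class $\bL-1$; dividing by $\bL^{2\dim\bA^1}$ gives the claim.

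The bijection $\overline{\cC}(k')\to C(k')$ follows from $\SL_r$-invariance of $\det$: this descends to a morphism $\sS_r\to\bA^1$ which restricts to $\cC\to C$. Surjectivity: the class $[\diag(f,1,\ldots,1)]$ lies in $\overline{\cC}(k')$ and maps to $f$. Injectivity: every class in $\overline{\cC}(k')$ has (by definition of $\cC$) a diagonal representative $\diag(f',1,\ldots,1)$, with $f'$ recovered as the image under $\det$.

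The main content is that $\cC$ is a cylinder with $\mu_{\sS_r}(\cC)=(\bL-1)\bL^{-(r+1)}$. I plan to establish this by a $1$-jet analysis, adapting \cite[Proposition 10.10]{SatrianoUsatine} (the $r=2$ case cited in Example \ref{ex:KcXY-not-effective}). Since $\SL_r$ is special, the preimage $\widetilde{\cC}\subset\sL(\bA^{r^2})$ of $\cC$ consists of matrix arcs $A$ whose first column vanishes at $t=0$ and whose determinant has valuation exactly $1$; these are $1$-jet conditions, so $\widetilde{\cC}$ (hence $\cC$) is a cylinder stable at level $1$. Equivalently, $\widetilde{\cC}$ is the image of $(g,f)\mapsto g\cdot\diag(f,1,\ldots,1)$. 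Writing $g=g_0+g_1t$ with $g_0\in\SL_r$ and $\mathrm{tr}(g_0^{-1}g_1)=0$, and $f=a_1t+O(t^2)$ with $a_1\in\bG_m$, direct computation shows the $1$-jet of $g\cdot\diag(f,1,\ldots,1)$ depends only on $g_0$, $a_1$, and columns $2,\ldots,r$ of $g_1$, while the first column of $g_1$ is free subject to the single linear constraint $\mathrm{tr}(g_0^{-1}g_1)=0$. Hence $\sL_1(\SL_r)\times\pi_1(C)\to\pi_1(\widetilde{\cC})$ is surjective with fibers $\cong\bA^{r-1}$, giving
\[
[\pi_1(\widetilde{\cC})]=[\sL_1(\SL_r)](\bL-1)\bL^{1-r}.
\]
Since $\SL_r$ (hence $\sL_1(\SL_r)$) is special, $[\pi_1(\cC)]=[\pi_1(\widetilde{\cC})]/[\sL_1(\SL_r)]=(\bL-1)\bL^{1-r}$ in the Grothendieck ring of stacks with affine stabilizers. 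Using $\dim\sS_r=1$, the stacky measure formula yields $\mu_{\sS_r}(\cC)=[\pi_1(\cC)]\bL^{-2}=(\bL-1)\bL^{-(r+1)}$.

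The main obstacle is the stacky $1$-jet analysis. The crucial insight is that the first column of $g_1$ is \emph{invisible} in the $1$-jet of $g\cdot\diag(f,1,\ldots,1)$, producing the exponent $-(r+1)$ rather than the naive $-2$ one would expect by treating $\cC$ as birational to $C$ via $\det$. Once this is established and $\cC$ is known to be a stable cylinder, the stacky motivic measure formula from \cite[\S10]{SatrianoUsatine} completes the computation.
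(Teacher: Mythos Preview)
Your approach is sound and reaches the same conclusion as the paper, though the execution differs. The paper works at every jet level $n\ge 1$: it fixes the slice $Z_n=\{\diag(ut,1,\dots,1)\}\subset\sL_n(\bA^{r^2})$, identifies an explicit subgroup $H_n\cong\bG_a^{r-1}\subset\sL_n(\SL_r)$ (matrices equal to the identity except for entries $a_it^n$ in the first column below the diagonal) that fixes each point of $Z_n$, deduces $\e(\widetilde{C}_n)=\e(Z_n)\e(\sL_n(\SL_r))\e(H_n)^{-1}$, and then passes to the limit. You instead characterize $\widetilde{\cC}$ by explicit $1$-jet conditions and analyze the fibers of the action map $\sL_1(\SL_r)\times\pi_1(C)\to\pi_1(\widetilde{\cC})$ directly. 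Your $\bA^{r-1}$ fiber is exactly the paper's $H_1$, so the underlying mechanism is identical; your packaging is more geometric and avoids the higher-level computation, while the paper's stabilizer viewpoint makes the group-theoretic origin of the correction transparent and verifies stability of the sequence $\e(\theta_n(\cC))\bL^{-(n+1)}$ directly.

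Two points deserve tightening. First, your description of $\widetilde{\cC}$ as $\{A:\text{col}_1(A(0))=0,\ \det A\text{ has valuation }1\}$ requires the reverse inclusion: given such $A$, one checks that $g:=A\cdot\diag((\det A)^{-1},1,\dots,1)$ has entries in $k'[[t]]$ (the first column of $A$ has valuation $\ge 1$ while $\det A$ has valuation exactly $1$) and $\det g=1$. Second, your fiber claim shows that column~$1$ of $g_1$ moves freely in an $(r-1)$-dimensional affine space, but to conclude the fiber is \emph{exactly} $\bA^{r-1}$ you must also verify that $(g_0,a_1)$ are uniquely determined by the image. This holds: from $(A_0,A_1)\in\pi_1(\widetilde{\cC})$ one recovers $a_1=\det(\text{col}_1(A_1),\text{col}_2(A_0),\dots,\text{col}_r(A_0))$, which is precisely the $t$-coefficient of $\det A$ and hence nonzero, and then $\text{col}_1(g_0)=a_1^{-1}\text{col}_1(A_1)$. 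With these two additions your computation of $\e(\theta_1(\cC))=(\bL-1)\bL^{1-r}$ and the final measure go through.
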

\begin{proof}
Let us first handle the bijectivity statement. If $\psi\in \cC(k')$ then up to $\SL_r$-equivalence, it is of the form $\diag(f,1,\dots,1)$ with $f\in k'[[t]]$ of valuation $1$. Then $\sL(\det)(\psi)$ is the arc defined by $f$ and hence $\psi$ is determined up to $\SL_r$-equivalence by $\sL(\det)(\psi)$.

Next, since $\bA^1$ is smooth, we have $\mu^{\Gor}_{\bA^1}(C)=\mu_{\bA^1}(C)$ and it is immediate that the latter quantity is $(\bL-1)\bL^{-2}$.

It remains to compute $\mu_{\sS_r}(\cC)$. The proof is essentially the same as \cite[Proposition 10.10]{SatrianoUsatine}, except we are taking $i=0$ and considering $r\times r$ matrices instead of $2\times r$ matrices. Let $Z\subset\sL(\bA^{r^2})$ be the set of arcs of the form $\diag(f,1\dots,1)$ with $f$ of valuation $1$ and let $Z_n\subset\sL_n(\bA^{r^2})$ be the locally closed subscheme whose $A$-valued points for any $k$-algebra $A$, are arcs of the form $\diag(ut,1\dots,1)$ where $u\in A[t]/(t^{n+1})$ is a unit. Letting $\rho\colon\bA^{r^2}\to\sS_r$ be the smooth cover, we have $\cC=\sL(\rho)(Z)$ and $\theta_n(\cC)=\sL_n(\rho)(Z_n)$ where $\theta_n$ denotes the truncation map; it follows from Chevalley's Theorem for Artin stacks \cite[Theorem 5.2]{HallRydh} that $\theta_n(\cC)$ is constructible. Note that the class of $Z_n$ in the Grothedieck ring is given by $\e(Z_n)=(\bL-1)\bL^{n-1}$.

We first show
\[
\cC=\theta_1^{-1}(\theta_1(\cC_1)),
\]
and hence $\cC$ is a cylinder. It is clear that $\cC\subset\theta_1^{-1}(\theta_1(\cC_1))$, so we must show the reverse inclusion. As in the proof of \cite[Proposition 10.10]{SatrianoUsatine} in the paragraph below equation (4), this amounts to showing that if $k'/k$ is a field extension, $\widetilde{\psi}\in\sL(\bA^{r^2})(k')$ and the trunction $\theta_1(\widetilde{\psi})\in Z_1(k')$, then there exists $g\in\sL(\SL_r)(k')$ such that $g\cdot\widetilde{\psi}\in Z(k')$. We may think of $\widetilde{\psi}$ as an $r\times r$ matrix with entries in $k'[[t]]$ and the assumption on $\theta_1(\widetilde{\psi})$ tells us the valuation of the $(i,j)$-entry of $\widetilde{\psi}$ has valuation $1$ for $i=j=1$, valuation $0$ for $1<i=j$, and valuation at least $2$ otherwise. Let $h=\det\widetilde{\psi}$ which has valuation equal to $1$; in particular, $h\neq0$ so there exists a unique element $g\in\SL_r(k'((t)))$ such that $\widetilde{\psi}=g\cdot\diag(h,1,\dots,1)$. Multiplying out the right hand side and comparing valuations, we see the $(i,j)$-entry of $g$ has valuation $0$ for $i=j$, valuation at least $1$ for $i>j=1$, and valuation at least $2$ otherwise. Thus, $g$ has entries in $k'[[t]]$ so defines an element of $\sL(\SL_r)(k')$.

For the remainder of the proof, let $n\geq1$. Let $H_n\subset\sL_n(\SL_r)$ be the closed subgroup scheme whose $A$-valued points for any $k$-algebra $A$, are matrices of the form
\[
\begin{pmatrix}
1 & 0  & \dots & 0\\
a_1t^n & 1 & \dots & 0\\
\vdots & \vdots & \ddots & \vdots\\
a_rt^n & 0 & \dots & 1
\end{pmatrix}
\]
where $a_i\in A[t]/(t^{n+1})$. We have an isomorphism $H_n\simeq\bG_a^{r-1}$ as algebraic groups. In particular, $\e(H_n)=\bL^{r-1}$.

Let $\widetilde{\psi}_n\in Z_n(k')$ for a field extension $k'/k$. If $g_n\in\sL_n(\SL_r)(k')$, we claim that
\[
g_n\cdot\widetilde{\psi}_n\in Z_n(k')\quad \Longrightarrow\quad  g_n\in H_n(k')\quad \Longrightarrow\quad  g_n\cdot\widetilde{\psi}_n=\widetilde{\psi}_n.
\]
Let $\widetilde{\psi}_n=\diag(ut,1,\dots,1)$ where $u\in k'[t]/(t^{n+1})$ is a unit and denote by $(g_n)_{i,j}$ the $(i,j)$-entry of $g_n$. For the first implication, suppose $g_n\cdot\widetilde{\psi}_n=\diag(vt,1,\dots,1)$ with $v\in k'[t]/(t^{n+1})$ a unit. Then for $j>1$, $(g_n)_{i,j}$ is given by the Kronecker delta function $\delta_{i,j}$; for $j>1$, the arc $(g_n)_{1,j}$ annihilates $t$ and hence $(g_n)_{1,j}$ is of the form $a_jt^n$ for $a_j\in k$. Lastly, since $\det(g_n)=1$ we see $(g_n)_{1,1}=1$, so $g_n\in H_n(k')$. This proves the first implication. The second implication is a straightforward check using that elements of the form $at^n$ anihilate $t$.

Then, exactly as in the proof of \cite[Proposition 10.10]{SatrianoUsatine} (see the paragraph where equation (5) is proved), if $\widetilde{C}_n:=\sL_n(\rho)^{-1}(\theta_n(\cC))$, we obtain
\[
\e(\widetilde{C}_n) = \e(Z_n)\e(\sL_n(\SL_r))\e(H_n)^{-1} = (\bL-1)\bL^{n-r}\e(\sL_n(\SL_r)).
\]
Lastly, using Remark 2.4 and Corollary 3.22 of (loc.~cit), 
we have
\[
e(\theta_n(\cC))=e(\widetilde{C}_n)e(\sL_n(\SL_r))^{-1}=(\bL-1)\bL^{n-r}.
\]
It follows that 
\[
\mu_{\sS_r}(\cC)=\lim_{n\to\infty}\e(\theta_n(\cC))\bL^{-(n+1)\dim\sS_r}=(\bL-1)\bL^{-(r+1)}.\qedhere
\]
\end{proof}

\begin{proposition}\label{prop:relative-canonical-Mr}
Let $\cD\subset\sM_r$ denote the preimage of the $B\bG_m$ point under the determinant map $\det\colon\sM_r\to[\bA^1/\bG_m]$. Then
\[
K_{\sM_r/[\bA^1/\bG_m]}=(1-r)\cD.
\]
\end{proposition}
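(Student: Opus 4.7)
The plan is to compute $K_{\sM_r/[\bA^1/\bG_m]}$ in two steps: first pull it back along the smooth cover from Lemma~\ref{l:moduli-framed-bundles}(\ref{l:moduli-framed-bundles::pullback}) to reduce to computing $K_{\sS_r/\bA^1}$, and then pin it down using our motivic change of variables formula (Theorem~\ref{thm:main-mcvf}) applied to the cylinders of Lemma~\ref{l:measures-cylinders-for-mcvf}.

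First I will observe that $\cU := \sM_r \setminus \cD = \Spec k$ is a dense open subscheme embedding as an open immersion into $[\bA^1/\bG_m]$ via Lemma~\ref{l:moduli-framed-bundles}(\ref{l:moduli-framed-bundles::birational}), so Remark~\ref{rmk:mKXY-support} places the support of $K_{\sM_r/[\bA^1/\bG_m]}$ inside $\cD$. Since $\cD = [V(\det)/\GL_r]$ is cut out by the irreducible element $\det$, it is an irreducible reduced Cartier divisor, and hence $K_{\sM_r/[\bA^1/\bG_m]} = n'\cD$ for some integer $n'$; it suffices to prove $n' = 1 - r$. Pulling back along the cartesian square of Lemma~\ref{l:moduli-framed-bundles}(\ref{l:moduli-framed-bundles::pullback}) using Lemma~\ref{l:pullback-K-lci}(\ref{l:pullback-K-lci::pullback}) (whose hypothesis is satisfied because $\cU \times_{\sM_r} \sS_r = \bA^1 \setminus \{0\}$ is itself a dense open subscheme) yields
\[
K_{\sS_r/\bA^1} = \Psi^*(n'\cD) = n'\cD',
\]
where $\Psi\colon \sS_r \to \sM_r$ is the top horizontal map and $\cD' := \Psi^*\cD = [V(\det)/\SL_r]$ is still cut out by $\det$, hence an irreducible reduced divisor.

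Next I will evaluate $n'$ by applying Theorem~\ref{thm:main-mcvf} to $\pi\colon \sS_r \to \bA^1$. The hypotheses are satisfied: $\sS_r$ has affine diagonal and affine geometric stabilizers since $\SL_r$ is affine; $\pi$ is weakly birational via $\bA^1 \setminus \{0\} \hookrightarrow \sS_r$; and $\bA^1$ is smooth, so we may take $m = 1$. Lemma~\ref{l:measures-cylinders-for-mcvf} supplies cylinders $\cC \subset |\sL(\sS_r)|$ and $C \subset \sL(\bA^1)$ with $\overline{\cC}(k') \to C(k')$ bijective and every arc in $\cC$ having generic point in $\bA^1 \setminus \{0\}$. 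Direct inspection of the representative $\diag(f, 1, \ldots, 1)$ with $\ord(f) = 1$ shows $\ord_{\cD'} \equiv 1$ on $\cC$, so substituting the measures of Lemma~\ref{l:measures-cylinders-for-mcvf} into
\[
\mu^{\Gor}_{\bA^1}(C) = \int_{\cC} \bL^{-n'\ord_{\cD'}}\, d\mu_{\sS_r} = \bL^{-n'}\mu_{\sS_r}(\cC)
\]
gives $(\bL - 1)\bL^{-2} = \bL^{-n'}(\bL - 1)\bL^{-(r+1)}$, and hence $n' = 1 - r$.

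Since all the computational input has been assembled in Lemmas~\ref{l:pullback-K-lci} and \ref{l:measures-cylinders-for-mcvf}, the proof is mostly bookkeeping; the one small delicacy is verifying that $\Psi^*\cD = \cD'$ holds at the level of Cartier divisors rather than merely of closed substacks, but this is clear because both divisors are cut out by the same equation $\det$.
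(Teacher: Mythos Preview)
Your proof is correct and follows essentially the same approach as the paper: reduce to $\sS_r \to \bA^1$ via the cartesian square and Lemma~\ref{l:pullback-K-lci}(\ref{l:pullback-K-lci::pullback}), observe the relative canonical divisor is an integer multiple of the exceptional divisor, and then pin down that integer by applying Theorem~\ref{thm:main-mcvf} to the cylinders of Lemma~\ref{l:measures-cylinders-for-mcvf}. The only cosmetic difference is that you argue $K_{\sM_r/[\bA^1/\bG_m]} = n'\cD$ upstairs before pulling back, whereas the paper pulls back first and then argues $K_{\sS_r/\bA^1} = m\cD'$ using the smooth cover $\bA^{r^2} \to \sS_r$ together with Proposition~\ref{prop:order-fnc-determines-div}; also, your citation of Remark~\ref{rmk:mKXY-support} is slightly off since $[\bA^1/\bG_m]$ is not a scheme---the relevant support statement in this setting is the paragraph preceding Lemma~\ref{l:pullback-K-lci} (via Proposition~\ref{prop:rat-sec-->Cartier}).
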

\begin{proof}
By Lemma \ref{l:moduli-framed-bundles}(\ref{l:moduli-framed-bundles::pullback}) and Lemma \ref{l:pullback-K-lci}(\ref{l:pullback-K-lci::pullback}), it suffices to replace $\sM_r$ by $\sS_r$ and $\cD$ by the preimage $\cD'$ of the origin under the map $\det\colon\sS_r\to\bA^1$. By Lemma \ref{l:moduli-framed-bundles}, $\det$ is an isomorphism over $\bA^1\setminus 0$ and so $\cD'$ is the exceptional locus. Letting $\rho\colon\bA^{r^2}\to\sS_r$ be the smooth cover, we know $\rho^*K_{\sS_r/\bA^1}$ is supported on the irreducible divisor $\rho^*\cD'$, hence equals $m\rho^*\cD'$ for some integer $m$; then Proposition \ref{prop:order-fnc-determines-div} proves $K_{\sS_r/\bA^1}=m\cD'$.

Let $\cC$ and $C$ be as in Lemma \ref{l:measures-cylinders-for-mcvf}. Since all arcs in $\cC$ have determinant with valuation $1$, we see $\ord_\cD'=1$ and $\cC\subset|\sL(\sS_r)|\setminus|\sL(\cD')|$. Furthermore, the map $\overline{\cC}(k')\to C(k')$ is bijective by Lemma \ref{l:measures-cylinders-for-mcvf}. Thus, the hypotheses of our motivic change of variables formula Theorem \ref{thm:main-mcvf} are satisfied. Using the computations of the measures given in Lemma \ref{l:measures-cylinders-for-mcvf}, we then have
\begin{align*}
(\bL-1)\bL^{-2}=\mu^{\Gor}_{\bA^1}(C)&=\int_{\cC}\bL^{-\ord_{K_{\sS_r/\bA^1}}}d\mu_\cX\\
&=\bL^{-m}\mu_\cX(\cC)=(\bL-1)\bL^{-(m+r+1)}.
\end{align*}
It follows that $m=1-r$.
\end{proof}

Already Proposition \ref{prop:relative-canonical-Mr} is enough to prove that all $1$-Gorenstein canonical singularities have crepant resolutions by Artin stacks. To construct such resolutions for $\QQ$-Gorenstein log-terminal singularities, we must obtain rational coefficients in our expression for $K_{\sM_r/[\bA^1/\bG_m]}$. For this, we combine the determinant map $\det\colon\sM_r\to[\bA^1/\bG_m]$ with a root stack construction.

In what follows, let $\chi_d\colon[\bA^1/\bG_m]\to[\bA^1/\bG_m]$ be the morphism sending a line bundle with section $(\sL,s)$ to its $d$th power $(\sL^{\otimes d},s^{\otimes d})$. This map is the universal $d$th root stack. To distinguish between $\sM_r$ viewed as an $[\bA^1/\bG_m]$-stack via $\det$ or $\det_d:=\chi_d\circ\det$, we use the notation $K_{\det}$ and $K_{\det_d}$.

\begin{proposition}\label{prop:relative-canonical-Mr-root-stack}
Let $\cD':=B\bG_m\subset[\bA^1/\bG_m]$ denote the closed substack viewed as a Cartier divisor. Then
\[
K_{\det_d}=\left(1-\frac{r}{d}\right){\det}_d^*\cD'
\]
\end{proposition}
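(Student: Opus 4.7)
The plan is to factor $\det_d = \chi_d\circ \det$ and apply the composition formula of Lemma~\ref{l:pullback-K-lci}(\ref{l:pullback-K-lci::composition}). Since both copies of $[\bA^1/\bG_m]$ are smooth, one takes $m=1$ and obtains
\[
K_{\det_d} = K_{\det} + \det^* K_{\chi_d} = (1-r)\cD + \det^* K_{\chi_d},
\]
where Proposition~\ref{prop:relative-canonical-Mr} gives $K_{\det} = (1-r)\cD$. The problem thus reduces to computing $K_{\chi_d}$.

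For $K_{\chi_d}$, the approach is to pull back along the atlas $\bA^1 \to [\bA^1/\bG_m]^{\text{target}}$. The standard description of root stacks identifies this pullback with the universal $d$th root stack $[\bA^1/\mu_d]$ of $\bA^1$ along $\{0\}$, giving a Cartesian square
\[
\xymatrix{
[\bA^1/\mu_d] \ar[r]^-{g} \ar[d]_-{h} & [\bA^1/\bG_m]^{\text{source}} \ar[d]^-{\chi_d} \\
\bA^1 \ar[r] & [\bA^1/\bG_m]^{\text{target}}
}
\]
with $h\colon s\mapsto s^d$. By Lemma~\ref{l:pullback-K-lci}(\ref{l:pullback-K-lci::pullback}), $g^*K_{\chi_d}=K_h$. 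One then computes $K_h$ on the \'etale atlas $\bA^1\to[\bA^1/\mu_d]$: writing $\omega_h = \omega_{[\bA^1/\mu_d]}\otimes (h^*\omega_{\bA^1})^{-1}$, the natural inclusion $h^*\omega_{\bA^1}\hookrightarrow\omega_{[\bA^1/\mu_d]}$ on the atlas sends the generator $dx$ to $d(s^d)=d\,s^{d-1}\,ds$, so the canonical trivialization of $\omega_h$ on the open locus corresponds to $1/(d\,s^{d-1})$. Hence the rational section of $\omega_h$ equal to $1$ on the open part has divisor $(d-1)\{0\}$ on the atlas, giving $K_h=(d-1)\,B\mu_d$. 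Since $g$ is a smooth surjection with $g^*B\bG_m^{\text{source}}=B\mu_d$, Proposition~\ref{prop:order-fnc-determines-div} forces $K_{\chi_d}=(d-1)\,B\bG_m^{\text{source}}$.

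Finally, $\det^*B\bG_m^{\text{source}}=\cD$ (by the very definition of $\cD$ in Proposition~\ref{prop:relative-canonical-Mr}), so $\det^*K_{\chi_d}=(d-1)\cD$ and $K_{\det_d}=(d-r)\cD$. On the other hand, the universal section $x$ of $\cO(1)^{\text{target}}$ pulls back under $\chi_d$ to $y^d$, so $\chi_d^*\cD'=d\,B\bG_m^{\text{source}}$ and $\det_d^*\cD'=d\,\cD$. Combining these,
\[
K_{\det_d} = (d-r)\cD = \tfrac{d-r}{d}\cdot d\,\cD = \left(1-\tfrac{r}{d}\right)\det_d^*\cD',
\]
as desired. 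The main technical hurdle is the atlas-level calculation of $K_h$: one must carefully match the canonical trivialization of $\omega_h$ on the open locus against the generators $ds$ and $d(s^d)$, and verify the sign so that $K_h$ is \emph{effective} of multiplicity $d-1$; once this is done correctly, the composition and pullback formulas for $K$ reduce the rest to straightforward bookkeeping.
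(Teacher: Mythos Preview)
Your proof is correct and follows essentially the same route as the paper: factor $\det_d=\chi_d\circ\det$, apply Lemma~\ref{l:pullback-K-lci}(\ref{l:pullback-K-lci::composition}), use Proposition~\ref{prop:relative-canonical-Mr} for $K_{\det}$, compute $K_{\chi_d}=(d-1)\cD'_d$, and rewrite in terms of $\det_d^*\cD'$ via $\chi_d^*\cD'=d\,\cD'_d$. The paper simply asserts ``one computes $K_{\cR_d/\cR_1}=(d-1)\cD'_d$'' without further justification, whereas you supply this via the base change to $[\bA^1/\mu_d]\to\bA^1$ and an explicit $ds$ versus $d(s^d)$ calculation on the atlas; this is a welcome elaboration rather than a different method. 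One minor remark: invoking Proposition~\ref{prop:order-fnc-determines-div} to descend from $g^*K_{\chi_d}=K_h$ is slightly indirect---it is cleaner to note that $K_{\chi_d}$ is supported on $B\bG_m^{\text{source}}$, hence equals $n\cdot B\bG_m^{\text{source}}$ for some $n$, and then read off $n=d-1$ after pulling back by the smooth surjection $g$.
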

\begin{proof}
To distinguish between the two copies of $[\bA^1/\bG_m]$, let $\chi_d$ be the map $\cR_d:=[\bA^1/\bG_m]\to [\bA^1/\bG_m]=:\cR_1$ and let $\cD'_d:=B\bG_m\subset\cR_d$. Since $\chi_d$ is an isomorphism over $\cR_1\setminus\cD'$, Lemma \ref{l:moduli-framed-bundles}(\ref{l:moduli-framed-bundles::birational}) shows that $\det_d$ is as well. By Lemma \ref{l:pullback-K-lci}(\ref{l:pullback-K-lci::composition}), we have
\[
K_{\sM_r/\cR_1}=K_{\sM_r/\cR_d}+{\det}^*K_{\cR_d/\cR_1}.
\]
One computes $K_{\cR_d/\cR_1}=(d-1)\cD'_d=(1-\frac{1}{d})\chi_d^*\cD'$. Note that $\cD$ from Proposition \ref{prop:relative-canonical-Mr} is given by $\det^*\cD'_d=\frac{1}{d}\det_d^*\cD'$. Therefore,
\[
K_{\sM_r/\cR_1}=\frac{1-r}{d}{\det}_d^*\cD' +\left(1-\frac{1}{d}\right){\det}_d^*\cD'=\left(1-\frac{r}{d}\right){\det}_d^*\cD'.\qedhere
\]
\end{proof}

We arrive at the following key theorem. 

\begin{theorem}\label{thm:anti-snc}
Let $Z$ be a smooth irreducible finite type variety and let $D_1\dots,D_n$ be the irreducible components of a simple normal crossing divisor. Let $D=\sum_{i=1}^n m_iD_i$ with $m_i>-1$ rational numbers. Then there exists a good moduli space morphism $\pi\colon\cX\to Z$ where $\cX$ is a smooth irreducible finite type Artin stack with affine diagonal, $\pi$ is an isomorphism over $Z\setminus D$, and there is an equality
\[
-K_{\cX/Z}=\pi^*D
\]
of Cartier divisors.
\end{theorem}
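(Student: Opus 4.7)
The plan is to construct $\cX$ as an iterated fiber product of copies of the stack $\sM_r$ from Lemma \ref{l:moduli-framed-bundles}, one for each divisor $D_i$, with $(r,d) = (r_i, d_i)$ chosen so that Proposition \ref{prop:relative-canonical-Mr-root-stack} contributes the coefficient $m_i$. Since each $m_i > -1$ is rational, write $m_i + 1 = r_i/d_i$ with positive integers $r_i, d_i$. Let $f_i \colon Z \to [\bA^1/\bG_m]$ classify the pair $(\cO_Z(D_i), s_i)$, where $s_i$ is the canonical section cutting out $D_i$. Set
\[
\cX_i := Z \times_{[\bA^1/\bG_m]} \sM_{r_i}
\]
(the fiber product being taken with respect to $f_i$ and $\det_{d_i}$), and define $\cX := \cX_1 \times_Z \cdots \times_Z \cX_n$, with structure map $\pi \colon \cX \to Z$ and projections $q_i \colon \cX \to \cX_i$.

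The critical step is verifying that $\cX$ is smooth; this is where the SNC hypothesis enters. Near any point of $Z$, choose local coordinates $x_1, \dots, x_N$ forming a regular system of parameters such that the divisors passing through the point are exactly $D_1, \dots, D_n$ with $D_i = \{x_i = 0\}$ (the other $\cX_j$'s are étale near the point and contribute nothing). Trivializing each $\cO(D_i)$ over this chart and using the atlases $\bA^{r_i^2} \to \sM_{r_i}$, one computes that $\cX$ is covered locally by the scheme
\[
V = \{(z, A_1, \dots, A_n, \lambda_1, \dots, \lambda_n) \in Z \times \textstyle\prod_i \bA^{r_i^2} \times \bG_m^n : x_i(z) = \lambda_i \det(A_i)^{d_i} \text{ for all } i\}.
\]
The defining equations cut out a smooth complete intersection of codimension $n$ because the partials $\partial/\partial x_i$ form the identity as the relevant $n \times n$ submatrix of the Jacobian. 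The remaining stated properties then follow formally: $\cX$ has affine diagonal because each $\sM_{r_i} = [\bA^{r_i^2}/\GL_{r_i}]$ does; $\pi$ restricts to an isomorphism over $Z \setminus |D|$ by Lemma \ref{l:moduli-framed-bundles}(\ref{l:moduli-framed-bundles::birational}), which forces $\cX$ to be irreducible since it is smooth and connected (the latter from the good moduli space property below); and $\pi$ is a good moduli space morphism because each $\cX_i \to Z$ factors as the composition of two good moduli space maps (the base change of $\det \colon \sM_{r_i} \to [\bA^1/\bG_m]$ to the $d_i$-th root stack of $(Z, D_i)$, followed by the coarse moduli map of that tame DM root stack), and this property is preserved under base change from the target, fiber product over $Z$, and composition.

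Finally, to compute $K_{\cX/Z}$, first apply Lemma \ref{l:pullback-K-lci}(\ref{l:pullback-K-lci::pullback}) to the Cartesian square defining each $\cX_i$; since the zero-section divisor $\cD' \subset [\bA^1/\bG_m]$ pulls back under $f_i$ to $D_i$, Proposition \ref{prop:relative-canonical-Mr-root-stack} gives $K_{\cX_i/Z} = (1 - r_i/d_i) \pi_i^* D_i$. To combine the $\cX_i$'s, apply Lemma \ref{l:pullback-K-lci}(\ref{l:pullback-K-lci::product}) to the product $\cX_1 \times \cdots \times \cX_n \to Z^n$, then apply Lemma \ref{l:pullback-K-lci}(\ref{l:pullback-K-lci::pullback}) to the Cartesian square obtained by pulling back along the diagonal $Z \hookrightarrow Z^n$. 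The resulting equality
\[
K_{\cX/Z} = \sum_{i=1}^n q_i^* K_{\cX_i/Z} = \sum_{i=1}^n \Bigl(1 - \frac{r_i}{d_i}\Bigr) \pi^* D_i = -\sum_{i=1}^n m_i \pi^* D_i = -\pi^* D
\]
is the desired formula. The main obstacle is the smoothness verification, which essentially uses the SNC hypothesis; everything else is a formal consequence of the machinery developed in the earlier sections.
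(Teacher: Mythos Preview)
Your proof is correct and follows essentially the same approach as the paper: both construct $\cX$ as the fiber product of $Z$ with $\prod_i \sM_{r_i}$ over $[\bA^1/\bG_m]^n$ (your iterated fiber product $\cX_1\times_Z\cdots\times_Z\cX_n$ is exactly this), and both compute $K_{\cX/Z}$ via Proposition~\ref{prop:relative-canonical-Mr-root-stack} together with the pullback and product formulas in Lemma~\ref{l:pullback-K-lci}. The only notable presentational difference is the smoothness argument: the paper observes that the SNC hypothesis makes the classifying map $\Phi\colon Z\to[\bA^1/\bG_m]^n$ smooth, so smoothness of $\cX$ follows immediately from smoothness of $\sM$, whereas you unwind this into an explicit Jacobian computation on a local chart---same content, just less packaged.
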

\begin{proof}
Each $D_i$ is cut out by a section $f_i \in \cO(D_i)$; the $(\cO(D_i),f_i)$ define a map $\Phi\colon Z\to[\bA^1/\bG_m]^n$. Since the $D_i$ are the components of a simple normal crossing divisor, $\Phi$ is smooth.
Let $m_i=-1+\frac{r_i}{d_i}$ with $r_i$ and $d_i$ positive integers. Let $\sM:=\sM_{r_1+1}\times\dots\times\sM_{r_n+1}$ and let $p_i\colon\sM\to\sM_{r_i+1}$ be the $i$th projection map. Viewing $\sM_{r_i+1}$ as an $[\bA^1/\bG_m]$-stack via the map $\chi_{d_i}\circ\det$, we have a morphism $\xi\colon\sM\to[\bA^1/\bG_m]^n$. 
We define $\cX$ to be the fiber product
\[
\xymatrix{
\cX\ar[r]^-{\Psi}\ar[d]_-{\pi} & \sM\ar[d]^-\xi\\
Z\ar[r]^-{\Phi} & [\bA^1/\bG_m]^n
}
\]
It follows from Proposition \ref{l:moduli-framed-bundles}(\ref{l:moduli-framed-bundles::birational}) and \cite[Lemma 4.15]{Alper} that $\xi$ is a good moduli space map. Since it is also finite type, we see $\pi$ is a finite type good moduli space map. Next, $\cX$ is smooth since both $\sM$ and $\Phi$ are smooth. By construction, $Z\setminus D$ is the preimage under $\Phi$ of the point $\Spec k=[\bG_m/\bG_m]^n\to [\bA^1/\bG_m]^n$, so another application of Proposition \ref{l:moduli-framed-bundles}(\ref{l:moduli-framed-bundles::birational}) shows $\pi$ an isomorphism over $Z\setminus D$. In particular, irreducibility of $\cX$ follows from that of $Z$. Note also that $\sM$ and $[\bA^1/\bG_m]^n$ have affine diagonal so $\xi$ has affine diagonal; this implies $\pi$ has affine diagonal, and since $Z$ is separated, $\cX$ has affine diagonal.

It remains to compute $K_{\cX/Z}$. 
Let $\cD'_i\subset[\bA^1/\bG_m]^n$ be the preimage of $B\bG_m$ under the $i$th projection map to $[\bA^1/\bG_m]$. Let $\cD_i\subset\sM$ be the inverse image of $\cD'_i$. Applying Proposition \ref{prop:relative-canonical-Mr-root-stack} and Lemma \ref{l:pullback-K-lci}(\ref{l:pullback-K-lci::product}), we see
\[
K_{\sM/[\bA^1/\bG_m]^n}=\sum_{i=1}^n p_i^*K_{\sM_{r_i+1}/[\bA^1/\bG_m]}=\sum_{i=1}^n \left(1-\frac{r_i}{d_i}\right)\cD_i.
\]
Now observe
\[
\pi^*D=\sum_{i=1}^nm_i \pi^*D_i=\sum_{i=1}^nm_i\pi^*\widetilde{\Phi}^*\cD'_i=\sum_{i=1}^nm_i\Psi^*\cD_i=-\Psi^*K_{\sM/[\bA^1/\bG_m]^n}.
\]
Lastly, $\Psi^*K_{\sM/[\bA^1/\bG_m]^n}=K_{\cX/Z}$ by Lemma \ref{l:pullback-K-lci}(\ref{l:pullback-K-lci::pullback}).
\end{proof}

\begin{remark}\label{rmk:anti-snc}
The proof of Theorem \ref{thm:anti-snc} shows that $\cX$ is, in fact, a moduli space. It parameterizes tuples $(\{\cE_i\}_{i=1}^n,\{\beta_i\}_{i=1}^n,\{\iota_i\}_{i=1}^n)$, where $\cE_i$ is a vector bundle of rank $r_i+1$, $\beta_i\colon\cO^{\oplus(r_i+1)}\to\cE_i$ is a morphism, and $\iota_i\colon(\det(\cE_i))^{\otimes d_i}\xrightarrow{\simeq}\cO(D_i)$ is an isomorphism with the property that $\iota_i\circ(\det\beta_i)^{\otimes d_i}$ maps $1$ to $f_i$.
\end{remark}

We may now prove the main theorem of our paper.

\begin{proof}[{Proof of Theorem \ref{thm:crepant-log-terminal}}]
By Hironaka's Theorem there exists a strong resolution of singularities $p\colon Z\to Y$ with exceptional locus given by an effective simple normal crossing  divisor $E$ with irreducible components $D_1,\dots,D_n$. Since $Y$ is $\QQ$-Gorenstein and log-terminal, $K_{Z/Y}=\sum_{i=1}^n m_iD_i$ with $m_i>-1$ rational numbers. By Theorem \ref{thm:anti-snc}, there exists a morphism $\pi\colon\cX\to Z$ where $\cX$ is a smooth irreducible finite type Artin stack with affine diagonal, $\pi$ is an isomorphism on $Z\setminus E=p^{-1}(Y^{\textrm{sm}})$, and $-K_{\cX/Z}=\pi^*K_{Z/Y}$. We see then that $\cX\to Y$ is an isomorphism over $Y^{\textrm{sm}}$ and by Lemma \ref{l:pullback-K-lci}(\ref{l:pullback-K-lci::composition}), 
\[
K_{\cX/Y}=K_{\cX/Z}+\pi^*K_{Y/Z}=0
\]
proving that $\cX\to Y$ is crepant.
\end{proof}

{\section*{Appendix A: Cartier divisors on stacks}}
\renewcommand{\thesection}{A}
\refstepcounter{section}
\label{appendix:Cartier-divs}

While the results in this section are presumably well-known, we include them here for lack of a suitable reference. Throughout this section, $\cX$ denotes a finite type integral Artin stack over an algebraically closed field $k$.\\

\begin{definition}\label{def:Cartier-stacks}
An \emph{effective Cartier divisor} on $\cX$ is a closed substack $\cD\subset\cX$ with invertible ideal sheaf $\cI_\cD$. A \emph{Cartier divisor} is a formal sum $\cD_+-\cD_-$ with $\cD_\pm$ effective Cartier divisors sharing no common components. 
\end{definition}

\begin{definition}\label{def:generic-pt-stack}
If $\cX$ has a dense open subscheme $\cU\subset\cX$, we define the \emph{generic point of} $\cX$ to be the map $\eta\colon\Spec k(\cU)\to\cX$. We use the notation $k(\cX):=k(\cU)$.
\end{definition}

\begin{remark}
The map $\Spec k(\cX)\to\cX$ is independent of the choice of dense open subscheme $\cU$. Indeed, given another choice of dense open subscheme $\cV\subset\cX$, we see $\cU\times_\cX\cV\to\cX$ is also a dense open scheme and we have induced isomorphisms $k(\cU)\xrightarrow{\simeq} K(\cU\times_\cX\cV)\xleftarrow{\simeq} K(\cV)$.
\end{remark}

\begin{proposition}\label{prop:rat-sec-->Cartier}
Suppose $\cX$ has a dense open subscheme and let $\eta\colon\Spec k(\cX)\to\cX$ be the generic point. If $\cL$ is a line bundle and with a choice of isomorphism $\eta^*\cL\simeq\cO_{k(\cX)}$, then there exists a Cartier divisor $\cD=\cD_+-\cD_-$ on $\cX$ such that $\cD_\pm$ are effective Cartier divisors sharing no common components and $\cL=\cI_{\cD_+}^\vee\otimes\cI_{\cD_-}$ as subsheaves of $\eta_*\cO_{k(\cX)}$.

Furthermore, if $j\colon\cU\hookrightarrow\cX$ is an open subscheme and the isomorphism $\eta^*\cL\simeq\cO_{k(\cX)}$ extends to an isomorphism $j^*\cL\xrightarrow{\simeq} \cO_{\cU}$, then $\cD$ is supported on $\cX\setminus\cU$. 
\end{proposition}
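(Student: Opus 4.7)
The plan is to construct the ideal sheaves of $\cD_\pm$ intrinsically on $\cX$ as intersections inside $\eta_*\cO_{k(\cX)}$, and then verify effectivity, invertibility, and the identity $\cL = \cI_{\cD_+}^\vee\otimes\cI_{\cD_-}$ by pulling back along a smooth cover by a scheme.

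First, I would turn the given trivialization into an explicit embedding $\iota: \cL \hookrightarrow \eta_*\cO_{k(\cX)}$ by composing the adjunction unit $\cL \to \eta_*\eta^*\cL$ with $\eta_*$ of the given isomorphism. The dual trivialization $\eta^*\cL^\vee \simeq \cO_{k(\cX)}$ produces, by the same recipe, an injection $\iota^\vee: \cL^\vee \hookrightarrow \eta_*\cO_{k(\cX)}$. Both maps are injective: their kernels are subsheaves of line bundles that vanish after restricting to any dense open subscheme on which $\eta$ factors, hence are torsion; torsion-freeness of a line bundle on the integral stack $\cX$ then forces these kernels to be zero. In particular, $\cO_\cX$ itself embeds into $\eta_*\cO_{k(\cX)}$ through its own adjunction unit, so the three subsheaves $\iota(\cL)$, $\iota^\vee(\cL^\vee)$, and $\cO_\cX$ all sit inside the same ambient sheaf.

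Second, I would define the candidate ideal sheaves
\[
\cI_{\cD_-} := \iota(\cL) \cap \cO_\cX, \qquad \cI_{\cD_+} := \iota^\vee(\cL^\vee) \cap \cO_\cX,
\]
and let $\cD_\pm\subset\cX$ be the closed substacks they cut out. To check that $\cI_{\cD_\pm}$ are invertible, that $\cD_+$ and $\cD_-$ share no common components, and that $\iota(\cL) = \iota^\vee(\cI_{\cD_+}^\vee) \otimes \iota(\cI_{\cD_-})$ inside $\eta_*\cO_{k(\cX)}$, I would pull everything back along a smooth cover $\rho: X \to \cX$ by a scheme. Since $\cX$ is integral and $\rho$ is flat, every connected component $X_i$ of $X$ is an integral scheme whose generic point maps to $\eta$; so $\rho^*\cL|_{X_i}$ acquires a rational trivialization and is realized as a fractional ideal on $X_i$. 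The classical decomposition of a line bundle with rational section on an integral scheme into coprime effective numerator and denominator ideals then furnishes the scheme-theoretic versions of $\cI_{\cD_\pm}$. The compatibility $\rho^*\cI_{\cD_\pm}=(\text{the analogously defined ideals on }X)$ follows from exactness of $\rho^*$ on subsheaves, together with flat base change for $\eta_*$ along $\rho$. All three properties then descend smooth-locally to $\cX$.

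For the addendum, if the trivialization extends to an isomorphism $j^*\cL \xrightarrow{\sim} \cO_\cU$, then by construction $\iota|_\cU$ and $\iota^\vee|_\cU$ both factor through the canonical inclusion $\cO_\cU \hookrightarrow \eta_*\cO_{k(\cX)}$, so $\iota(\cL)|_\cU = \iota^\vee(\cL^\vee)|_\cU = \cO_\cU$; taking intersections with $\cO_\cX$ gives $\cI_{\cD_\pm}|_\cU=\cO_\cU$, so $\cD_\pm$ are supported on $\cX\setminus\cU$.

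The main obstacle I foresee is keeping track of invertibility of $\cI_{\cD_\pm}$ together with the identity $\cL = \cI_{\cD_+}^\vee \otimes \cI_{\cD_-}$ at the stacky level: both reduce to clean scheme statements on the cover $X$, but the descent step requires verifying that the intersection construction inside $\eta_*\cO_{k(\cX)}$ commutes with the flat pullback $\rho^*$, which is where flat base change along $\eta$ plays its essential role.
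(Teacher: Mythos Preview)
Your approach is correct and amounts to a modestly different organization of the same underlying ideas. The paper proceeds by first pulling the embedding $\cL\hookrightarrow\eta_*\cO_{k(\cX)}$ back to a smooth cover $\rho\colon X'\to\cX$, constructing the Cartier divisor $D'=D'_+-D'_-$ there, verifying the cocycle condition $\rho_1^*D'_\pm=\rho_2^*D'_\pm$ on $X'\times_\cX X'$, and then descending the ideal sheaves $\cI_{D'_\pm}$ to $\cX$. You instead define $\cI_{\cD_\pm}$ intrinsically on $\cX$ as the intersections $\iota(\cL)\cap\cO_\cX$ and $\iota^\vee(\cL^\vee)\cap\cO_\cX$ inside $\eta_*\cO_{k(\cX)}$, and then pull back to the cover only to verify invertibility and coprimality.

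What your route buys is that no explicit cocycle check or descent argument is needed: the sheaves already live on $\cX$. The price is the verification you flag in your last paragraph, namely that $\rho^*(\iota(\cL)\cap\cO_\cX)=\rho^*\iota(\cL)\cap\cO_{X'}$ inside $\rho^*\eta_*\cO_{k(\cX)}$; this follows from exactness of flat pullback on kernels (intersections being kernels of maps to direct sums of quotients). Note that the relevant base change is really for the open immersion $j\colon\cU\hookrightarrow\cX$, since $\eta_*\cO_{k(\cX)}=j_*\cK_\cU$; this is the identification the paper makes explicit in its chain \eqref{eqn:pullbackCartier-fracIdeal}. The paper's route, by contrast, keeps everything concrete on the cover and trades the intersection-commutes-with-pullback lemma for a direct argument that $\rho_1^*D'_\pm=\rho_2^*D'_\pm$, using flatness to rule out common components appearing after pullback. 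Both approaches invoke the same classical fact on the integral scheme components $X'_\ell$, namely that an invertible fractional ideal factors as a ratio of coprime invertible ideals.
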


\begin{remark}\label{rmk:frac-ideal-vs-rational-section}
A choice of isomorphism $\eta^*\cL\simeq\cO_{k(\cX)}$ is equivalent to a choice of non-zero rational section $s\in\Gamma(\eta^*\cL)$ since $\eta^*\cL$ is a free $\cO_{k(\cX)}$-module of rank one.
\end{remark}

\begin{proof}[{Proof of Proposition \ref{prop:rat-sec-->Cartier}}]
Let $j\colon\cU\hookrightarrow\cX$ be a dense open subscheme, so that $k(\cX)=k(\cU)$ and $\eta\colon \Spec k(\cX)=\Spec k(\cU)\to\cX$. Let $\cK_{\cU}$ be the sheaf of rational functions on $\cU$. Since $\cL$ is torsion-free, the adjunction map $\cL\hookrightarrow\eta_*\eta^*\cL$ is injective. 
We then have an embedding
\[
\iota\colon\cL\hookrightarrow\eta_*\cO_{k(\cU)}=j_*\cK_{\cU}.
\]
Consider the cartesian diagram
\[
\xymatrix{
U''\ar@<-.75ex>[d]_-{p_1} \ar@<.75ex>[d]^-{p_2}\ar@{^{(}->}[r]^-{j''} & X''\ar@<-.75ex>[d]_-{\rho_1} \ar@<.75ex>[d]^-{\rho_2} \\
U'\ar@{^{(}->}[r]^-{j'}\ar[d]_-{p} & X'\ar[d]^-{\rho}\\
\cU\ar@{^{(}->}[r]^-{j} & \cX
}
\]
where $\rho$ is a finite type smooth cover by a scheme and $X'=X'\times_\cX X'$. Each connected component $X'_\ell$ of $X'$ is integral so we may let $\cK_{X'}$ be the sheaf which is constant on $X'_\ell$ with value $K(X'_\ell)$; we may similarly define $\cK_{U'}$. Let $U'_\ell=X'_\ell\cap U'$, let $j'_\ell\colon U'_\ell\hookrightarrow X'_\ell$ be the open immersion.  We then have injections
\begin{equation}\label{eqn:pullbackCartier-fracIdeal}
\rho^*\cL\xhookrightarrow{\rho^*(\iota)}\rho^*j_*\cK_{\cU}\xrightarrow{\simeq} j'_*p^*\cK_{\cU}\hookrightarrow j'_*\cK_{U'}\xleftarrow{\simeq} \cK_{X'}
\end{equation}
where the second map is an isomorphism by flat base change; the third map is an injection since this may be checked component by component, where we may use that $U'_\ell\to\cU$ is a dominant map of integral schemes; the fourth map is an isomorphism since we may again check this component by component where we may use irreducibility of $X'_\ell$ and the fact that $K(X'_\ell)=K(U'_\ell)$.

Thus, on each component, we have an injection $\rho^*\cL|_{X'_\ell}\hookrightarrow \cK_{X'_\ell}$ and hence a Cartier divisor. We have therefore constructed a Cartier divisor $D'=D'_+-D'_-$ on $X'$ such that $D'_\pm$ are effective Cartier divisors sharing no common components and $\rho^*\cL=\cI_{D'_+}^\vee\otimes\cI_{D'_-}$ as subsheaves of $\cK_{X'}$.

Furthermore, since $\rho_1^*\rho^*\iota=\rho_2^*\rho^*\iota$, we see that the injection $\delta\colon\rho^*\cL\hookrightarrow \cK_{X'}$ constructed in \eqref{eqn:pullbackCartier-fracIdeal} satisfies $\rho_1^*\delta=\rho_2^*\delta$. As a result, $\rho_1^*D'=\rho_2^*D'$ as Cartier divisors. Note that for each $i$, $\rho_i^*D'_+$ and $\rho_i^*D'_-$ do not share a common component; indeed, if they did then there would be an irreducible component $Z_i\subset\rho_i^{-1}D'_+$ such that $\rho_i(Z)$ has codimension strictly greater than $1$, i.e., the generic point of $Z_i$ does not map to the generic point of a component of $D'_+$, which contradicts flatness of the map $\rho_i\times_{X'}D'_+\colon\rho_i^{-1}D'_+\to D'_+$. Thus, the equality $\rho_1^*D'=\rho_2^*D'$ implies $\rho_1^*D'_\pm=\rho_2^*D'_\pm$ as effective Cartier divisors, i.e., we have equality of the ideal sheaves $\rho_1^*\cI_{D'_\pm}=\rho_2^*\cI_{D'_\pm}$ as subsheaves of $\cO_{X''}$. By descent, we then obtain closed substacks $\cD_\pm\subset\cX$; the ideal sheaves $\cI_{\cD_\pm}$ are invertible since this can be checked smooth locally where we have $\rho^*\cI_{\cD_\pm}=\cI_{D'_\pm}$. Lastly, we have a natural embedding of $\cI_{\cD_+}^\vee\otimes\cI_{\cD_-}$ into $j_*\cK_\cU$ whose image agrees with $\cL$ since this may also be checked locally, where $\rho^*\cL=\cI_{D'_+}^\vee\otimes\cI_{D'_-}$ in $\cK_{X'}$.

If the isomorphism $\eta^*\cL\simeq\cO_{k(\cX)}$ extends to an isomorphism $j^*\cL\xrightarrow{\simeq} \cO_{\cU}$ then $j^*\cL=\cO_\cU$ as subsheaves of $\cO_{k(\cX)}$, and hence $\cD|_\cU=0$ by definition, i.e., $\cD$ is supported on $\cX\setminus\cU$.
\end{proof}


\begin{thebibliography}{BLVdB10}

\bibitem[Alp13]{Alper}
Jarod Alper.
\newblock Good moduli spaces for {A}rtin stacks.
\newblock {\em Ann. Inst. Fourier (Grenoble)}, 63(6):2349--2402, 2013.

\bibitem[AQ19]{AbramovichQuekLogRes}
Dan {Abramovich} and Ming~Hao {Quek}.
\newblock {Logarithmic resolution via multi-weighted blow-ups}.
\newblock {\em arXiv e-prints}, page arXiv:1906.07106, June 2019.

\bibitem[ATW19]{AbramovichTemkinWlodarczyk}
Dan {Abramovich}, Michael {Temkin}, and Jaros{\l}aw {W{\l}odarczyk}.
\newblock {Functorial embedded resolution via weighted blowings up}.
\newblock {\em arXiv e-prints}, page arXiv:1906.07106, June 2019.

\bibitem[ATWo20]{AbramovichTemkinWlodarczyk2}
Dan Abramovich, Michael Temkin, and Jaros\l~aw W\l~odarczyk.
\newblock Principalization of ideals on toroidal orbifolds.
\newblock {\em J. Eur. Math. Soc. (JEMS)}, 22(12):3805--3866, 2020.

\bibitem[Bat98]{Batyrev1998}
Victor~V. Batyrev.
\newblock Stringy {H}odge numbers of varieties with {G}orenstein canonical
  singularities.
\newblock In {\em Integrable systems and algebraic geometry ({K}obe/{K}yoto,
  1997)}, pages 1--32. World Sci. Publ., River Edge, NJ, 1998.

\bibitem[Bat99]{Batyrev99}
Victor~V. Batyrev.
\newblock Non-{A}rchimedean integrals and stringy {E}uler numbers of
  log-terminal pairs.
\newblock {\em J. Eur. Math. Soc. (JEMS)}, 1(1):5--33, 1999.

\bibitem[BCR22]{DTCRC}
Sjoerd~Viktor Beentjes, John Calabrese, and J\o rgen~Vold Rennemo.
\newblock A proof of the {D}onaldson-{T}homas crepant resolution conjecture.
\newblock {\em Invent. Math.}, 229(2):451--562, 2022.

\bibitem[BCY12]{BCY}
Jim Bryan, Charles Cadman, and Ben Young.
\newblock The orbifold topological vertex.
\newblock {\em Adv. Math.}, 229(1):531--595, 2012.

\bibitem[BFK19]{BallardFaveroKatzarkov}
Matthew Ballard, David Favero, and Ludmil Katzarkov.
\newblock Variation of geometric invariant theory quotients and derived
  categories.
\newblock {\em J. Reine Angew. Math.}, 746:235--303, 2019.

\bibitem[BG09]{BG}
Jim Bryan and Amin Gholampour.
\newblock The quantum {M}c{K}ay correspondence for polyhedral singularities.
\newblock {\em Invent. Math.}, 178(3):655--681, 2009.

\bibitem[BKR01]{BKR}
Tom Bridgeland, Alastair King, and Miles Reid.
\newblock The {M}c{K}ay correspondence as an equivalence of derived categories.
\newblock {\em J. Amer. Math. Soc.}, 14(3):535--554, 2001.

\bibitem[BLVdB10]{vdB10}
Ragnar-Olaf Buchweitz, Graham~J. Leuschke, and Michel Van~den Bergh.
\newblock Non-commutative desingularization of determinantal varieties {I}.
\newblock {\em Invent. Math.}, 182(1):47--115, 2010.

\bibitem[BO02]{BO}
A.~Bondal and D.~Orlov.
\newblock Derived categories of coherent sheaves.
\newblock In {\em Proceedings of the {I}nternational {C}ongress of
  {M}athematicians, {V}ol. {II} ({B}eijing, 2002)}, pages 47--56. Higher Ed.
  Press, Beijing, 2002.

\bibitem[Bri02]{BrideglandFlop}
Tom Bridgeland.
\newblock Flops and derived categories.
\newblock {\em Invent. Math.}, 147(3):613--632, 2002.

\bibitem[Bri07]{BridgelandStability}
Tom Bridgeland.
\newblock Stability conditions on triangulated categories.
\newblock {\em Ann. of Math. (2)}, 166(2):317--345, 2007.

\bibitem[CCIT09]{CCIT}
Tom Coates, Alessio Corti, Hiroshi Iritani, and Hsian-Hua Tseng.
\newblock Computing genus-zero twisted {G}romov-{W}itten invariants.
\newblock {\em Duke Math. J.}, 147(3):377--438, 2009.

\bibitem[CCIT15]{CCITtoric}
Tom Coates, Alessio Corti, Hiroshi Iritani, and Hsian-Hua Tseng.
\newblock A mirror theorem for toric stacks.
\newblock {\em Compos. Math.}, 151(10):1878--1912, 2015.

\bibitem[CI04]{CI}
Alastair Craw and Akira Ishii.
\newblock Flops of {$G$}-{H}ilb and equivalences of derived categories by
  variation of {GIT} quotient.
\newblock {\em Duke Math. J.}, 124(2):259--307, 2004.

\bibitem[CIJ18]{CIJ}
Tom Coates, Hiroshi Iritani, and Yunfeng Jiang.
\newblock The crepant transformation conjecture for toric complete
  intersections.
\newblock {\em Adv. Math.}, 329:1002--1087, 2018.

\bibitem[CR04]{ChenRuan}
Weimin Chen and Yongbin Ruan.
\newblock A new cohomology theory of orbifold.
\newblock {\em Comm. Math. Phys.}, 248(1):1--31, 2004.

\bibitem[DL99]{DenefLoeser1999}
Jan Denef and Fran\c{c}ois Loeser.
\newblock Germs of arcs on singular algebraic varieties and motivic
  integration.
\newblock {\em Invent. Math.}, 135(1):201--232, 1999.

\bibitem[DL02]{DenefLoeser2002}
Jan Denef and Fran\c{c}ois Loeser.
\newblock Motivic integration, quotient singularities and the {M}c{K}ay
  correspondence.
\newblock {\em Compositio Math.}, 131(3):267--290, 2002.

\bibitem[Eis95]{EisenbudCA}
David Eisenbud.
\newblock {\em Commutative algebra}, volume 150 of {\em Graduate Texts in
  Mathematics}.
\newblock Springer-Verlag, New York, 1995.
\newblock With a view toward algebraic geometry.

\bibitem[EM09]{EinMustata}
Lawrence Ein and Mircea Musta\c{t}\u{a}.
\newblock Jet schemes and singularities.
\newblock In {\em Algebraic geometry---{S}eattle 2005. {P}art 2}, volume~80 of
  {\em Proc. Sympos. Pure Math.}, pages 505--546. Amer. Math. Soc., Providence,
  RI, 2009.

\bibitem[ESZB23]{ESZB}
Jordan~S. Ellenberg, Matthew Satriano, and David Zureick-Brown.
\newblock Heights on stacks and a generalized {B}atyrev-{M}anin-{M}alle
  conjecture.
\newblock {\em Forum Math. Sigma}, 11:Paper No. e14, 54, 2023.

\bibitem[GKZ94]{GelfandKapranovZelevinsky}
I.~M. Gelfand, M.~M. Kapranov, and A.~V. Zelevinsky.
\newblock {\em Discriminants, resultants, and multidimensional determinants}.
\newblock Mathematics: Theory \& Applications. Birkh\"{a}user Boston, Inc.,
  Boston, MA, 1994.

\bibitem[HL15]{HalpernLeistner}
Daniel Halpern-Leistner.
\newblock The derived category of a {GIT} quotient.
\newblock {\em J. Amer. Math. Soc.}, 28(3):871--912, 2015.

\bibitem[HR17]{HallRydh}
Jack Hall and David Rydh.
\newblock The telescope conjecture for algebraic stacks.
\newblock {\em J. Topol.}, 10(3):776--794, 2017.

\bibitem[IW13]{IW13}
Osamu Iyama and Michael Wemyss.
\newblock On the noncommutative {B}ondal-{O}rlov conjecture.
\newblock {\em J. Reine Angew. Math.}, 683:119--128, 2013.

\bibitem[Kon95]{Kontsevich}
Maxim Kontsevich.
\newblock String cohomology, December 1995.
\newblock Lecture at {Orsay}.

\bibitem[KV00]{KV}
M.~Kapranov and E.~Vasserot.
\newblock Kleinian singularities, derived categories and {H}all algebras.
\newblock {\em Math. Ann.}, 316(3):565--576, 2000.

\bibitem[Loo02]{Looijenga}
Eduard Looijenga.
\newblock Motivic measures.
\newblock Number 276, pages 267--297. 2002.
\newblock S\'{e}minaire Bourbaki, Vol. 1999/2000.

\bibitem[McQ20]{McQuillanMarzo}
Michael McQuillan.
\newblock Very functorial, very fast, and very easy resolution of
  singularities.
\newblock {\em Geom. Funct. Anal.}, 30(3):858--909, 2020.

\bibitem[Ols07]{Olsson2007}
Martin Olsson.
\newblock Sheaves on {A}rtin stacks.
\newblock {\em J. Reine Angew. Math.}, 603:55--112, 2007.

\bibitem[Que22a]{Quekmonodromy}
Ming~Hao Quek.
\newblock {Around the motivic monodromy conjecture for non-degenerate
  hypersurfaces}.
\newblock {\em arXiv e-prints}, page arXiv:2208.02566, September 2022.

\bibitem[Que22b]{Quek}
Ming~Hao Quek.
\newblock Logarithmic resolution via weighted toroidal blow-ups.
\newblock {\em Algebr. Geom.}, 9(3):311--363, 2022.

\bibitem[Rei83]{ReidCrepant}
Miles Reid.
\newblock Minimal models of canonical {$3$}-folds.
\newblock In {\em Algebraic varieties and analytic varieties ({T}okyo, 1981)},
  volume~1 of {\em Adv. Stud. Pure Math.}, pages 131--180. North-Holland,
  Amsterdam, 1983.

\bibitem[Rei92]{Reid}
Miles Reid.
\newblock The {M}c{K}ay correspondence and the physicists' {E}uler number,
  1992.
\newblock Lecture at University of Utah and MSRI.

\bibitem[Rua06]{Ruan06}
Yongbin Ruan.
\newblock The cohomology ring of crepant resolutions of orbifolds.
\newblock In {\em Gromov-{W}itten theory of spin curves and orbifolds}, volume
  403 of {\em Contemp. Math.}, pages 117--126. Amer. Math. Soc., Providence,
  RI, 2006.


\bibitem[SvdB17]{SpenkoVanDenBergh17}
\v{S}pela \v{S}penko and Michel Van~den Bergh.
\newblock Non-commutative resolutions of quotient singularities for reductive
  groups.
\newblock {\em Invent. Math.}, 210(1):3--67, 2017.

\bibitem[SU21]{SatrianoUsatine2}
Matthew {Satriano} and Jeremy {Usatine}.
\newblock {A motivic change of variables formula for {A}rtin stacks}.
\newblock {\em arXiv e-prints}, page arXiv:2109.09800, September 2021.

\bibitem[SU22]{SatrianoUsatine}
Matthew Satriano and Jeremy Usatine.
\newblock Stringy invariants and toric {A}rtin stacks.
\newblock {\em Forum Math. Sigma}, 10:Paper No. e9, 60, 2022.

\bibitem[vdB22]{vanDenBergh22}
Michel {Van den Bergh}.
\newblock {Non-commutative crepant resolutions, an overview}.
\newblock {\em arXiv e-prints}, page arXiv:2207.09703, July 2022.

\bibitem[vdB04]{vdB2004}
Michel van~den Bergh.
\newblock Non-commutative crepant resolutions.
\newblock In {\em The legacy of {N}iels {H}enrik {A}bel}, pages 749--770.
  Springer, Berlin, 2004.

\bibitem[Vis89]{Vistoli89}
Angelo Vistoli.
\newblock Intersection theory on algebraic stacks and on their moduli spaces.
\newblock {\em Invent. Math.}, 97(3):613--670, 1989.

\bibitem[Yas04]{Yasuda2004}
Takehiko Yasuda.
\newblock Twisted jets, motivic measures and orbifold cohomology.
\newblock {\em Compos. Math.}, 140(2):396--422, 2004.

\bibitem[Yas06]{Yasuda2006}
Takehiko Yasuda.
\newblock Motivic integration over {D}eligne-{M}umford stacks.
\newblock {\em Adv. Math.}, 207(2):707--761, 2006.

\bibitem[{Yas}19]{Yasuda2019}
Takehiko {Yasuda}.
\newblock {Motivic integration over wild Deligne-Mumford stacks}.
\newblock {\em arXiv e-prints}, page arXiv:1908.02932, August 2019.

\end{thebibliography}

\end{document}